\DeclareMathOperator{\Tr}{Tr}
\DeclareMathOperator{\NCP}{NC}
\DeclareMathOperator{\NCG}{NCG}
\DeclareMathOperator{\scf}{sc}
\DeclareMathOperator{\E}{\mathbf{E}}
\DeclareMathOperator{\Prob}{\mathbf{P}}
\DeclareMathOperator{\pTr}{pTr}
\newcommand{\ii}{\mathrm{i}}
\renewcommand{\C}{\mathbf{C}}
\newcommand{\C}{\mathbf{C}}
\newcommand{\un}{\underline}
\newcommand{\vx}{\bm{x}}
\newcommand{\vy}{\bm{y}}
\newcommand{\wt}{\widetilde}
\newcommand{\R}{\mathbf{R}}
\newcommand{\N}{\mathbf{N}}
\newcommand{\Z}{\mathbf{Z}}
\newcommand{\DD}{\mathbf{D}}
\newcommand{\cO}{\mathcal{O}}
\newcommand{\co}{{\scriptstyle\mathcal{O}}}
\newcommand{\cP}{\pi}
\newcommand{\cQ}{\sigma}
\newcommand{\dif}{\operatorname{d}\!{}}
\DeclarePairedDelimiter{\braket}{\langle}{\rangle}%
\DeclarePairedDelimiter{\abs}{\lvert}{\rvert}%
\DeclarePairedDelimiter{\norm}{\lVert}{\rVert}%
\providecommand\given{}
\newcommand\SetSymbol[1][]{\nonscript\:#1\vert\allowbreak\nonscript\:\mathopen{}}
\DeclarePairedDelimiterX{\tuple}[1](){\renewcommand\given{\SetSymbol[\delimsize]}#1}
\DeclarePairedDelimiterX{\set}[1]\{\}{\renewcommand\given{\SetSymbol[\delimsize]}#1}
\DeclarePairedDelimiterXPP{\landauO}[1]{\cO}(){}{#1}
\DeclarePairedDelimiterXPP{\landauo}[1]{\co}(){}{#1}
\DeclarePairedDelimiterXPP{\landauOprec}[1]{\cO_\prec}(){}{#1}
\definecolor{cactusgreen}{HTML}{75A644}
\tikzset{
foreachCode/.code={\xdef\Lst{}\foreach #1 },
br/.style = {bend right},
b0/.style = {bend left=0},
bl/.style = {bend left=40},
gr/.style = {draw=lightgray},
w2/.style = {fill=white,draw=white,ultra thick,rounded corners},
w3/.style = {fill=black,draw=black,ultra thick,rounded corners},
g2/.style = {fill=gray,draw=gray,ultra thick,rounded corners},
g3/.style = {fill=gray,draw=gray,ultra thick,rounded corners}
}
\newcommand{\circEmb}[2][]{%
\begingroup%
\setkeys{circEmb}{#1}%
\StrCount{\cgr@part}{,}[\partLength]
\StrCount{\cgr@vertlabels}{,}[\vertlabelsLength]
\StrCount{\cgr@epart}{,}[\epartLength]
\ifnum\vertlabelsLength=0 
  \def\cgr@vertlabels {1,...,\cgr@number}
\else 
  \pgfmathsetmacro\cgr@number {1+\vertlabelsLength}
\fi
\def\degree {360/\cgr@number}
\ifnum\cgr@diamond=1 \def\diamondshape{diamond} \else \def\diamondshape{circle} \fi
\begin{tikzpicture}[baseline={([yshift=-2pt]current bounding box.center)},font=\footnotesize]
  \pgfdeclarelayer{bg} 
  \pgfsetlayers{bg,main}
  \begin{pgfonlayer}{bg}
    \draw[radius=\cgr@radius,fill=\cgr@bgcol, draw=none] (0,0) circle ;
  \end{pgfonlayer}
  \foreach \v [count=\s] in \cgr@vertlabels { 
  \ifnum\s=\cgr@number \def \shape {\diamondshape} \else \def \shape {circle} \fi
  \node [draw=black, \shape , minimum width=\cgr@ptwidth, minimum height=\cgr@ptwidth, inner sep=0pt,fill=black, text=white] at ({\degree * (\s -.5) + 90}:\cgr@radius) (\v) {\ifnum\cgr@vlabel=1 $\v$\fi };
  \coordinate (\v-c) at (\v.center);
  }
  \ifnum\cgr@dual=1
  \foreach \v [count=\s] in \cgr@vertlabels { 
  \ifnum\s=\cgr@number \def \shape {\diamondshape} \else \def \shape {circle} \fi 
  \node [text=white,draw=black, \shape , minimum width=\cgr@ptwidth, minimum height=\cgr@ptwidth, inner sep=0pt,fill=gray] at ({\degree * (\s) + 90}:\cgr@radius) (e \v) {\ifnum\cgr@vlabel=1 $\v$\fi };
  \coordinate (e-\v-c) at (e \v.center);
  }
  \fi
  \begin{pgfonlayer}{bg}
    \foreach \set in \cgr@parts {
    \draw [w3,foreachCode={\a in \set {\xdef\Lst{\Lst (\a-c) -- }}}] \Lst cycle;
    }
    \ifnum\partLength>0
    \draw [w3,foreachCode={\a in \cgr@part {\xdef\Lst{\Lst (\a-c) -- }}}] \Lst cycle;
    \fi
    \foreach \set in \cgr@eparts {
    \draw [g3,foreachCode={\a in \set {\xdef\Lst{\Lst (e-\a-c) -- }}}] \Lst cycle;
    }
    \ifnum\epartLength>0
    \draw [g3,foreachCode={\a in \cgr@epart {\xdef\Lst{\Lst (e-\a-c) -- }}}] \Lst cycle;
    \fi
  \end{pgfonlayer}
  \foreach \a/\b in \cgr@cactedges { \draw[draw=lightgray,line width=3pt] (\a)--(\b); }
  \foreach \a/\b in \cgr@edges { \draw[draw=white] (\a)--(\b); }
  #2
\end{tikzpicture}%
\endgroup%
}
\numberwithin{equation}{section}  
\newtheorem{theorem}{Theorem}[section]
\newtheorem{assumption}[theorem]{Assumption}
\newtheorem{lemma}[theorem]{Lemma}
\newtheorem{definition}[theorem]{Definition}
\newtheorem{example}[theorem]{Example}
\newtheorem{remark}[theorem]{Remark}
\newtheorem{corollary}[theorem]{Corollary}
\newtheorem{extension}[theorem]{Extension}
\date{\today}
\author{Giorgio Cipolloni \and L\'aszl\'o Erd\H{o}s}
\address{IST Austria, Am Campus 1, 3400 Klosterneuburg, Austria}
\author{Dominik Schr\"oder\(^{\dagger}\)}
\address{Institute for Theoretical Studies, ETH Zurich, Clausiusstr.\ 47, 8092 Zurich, Switzerland}
\email{giorgio.cipolloni@ist.ac.at} 
\email{lerdos@ist.ac.at}
\email{dschroeder@ethz.ch}
\thanks{\(^{\dagger}\)Supported by Dr.\ Max R\"ossler, the Walter Haefner Foundation and the ETH Z\"urich Foundation}
\subjclass[2010]{60B20, 15B52, 46L54} 
\keywords{Global Law, Local Law, Non-crossing Partitions, Asymptotic Freeness}
\title{Thermalisation for Wigner matrices}
\date{\today}
\pgfplotsset{compat=1.17}
\begin{document}

\begin{abstract} We compute the deterministic approximation of  products of Sobolev
    functions of large Wigner matrices \(W\) and provide  an optimal error bound on their fluctuation
    with very high probability. This generalizes Voiculescu's seminal theorem~\cite{MR1094052} from
     polynomials to general Sobolev functions, as well as from tracial quantities to individual matrix
     elements. Applying the result to  $e^{\ii t W}$ for large \(t\), 
       we obtain a precise decay  rate  for the overlaps of several 
    deterministic matrices with temporally well separated Heisenberg time evolutions; thus 
    we demonstrate the thermalisation effect of the unitary group generated by Wigner matrices.
    \end{abstract}

\thispagestyle{empty}    


\maketitle

\section{Introduction}

Since E. Wigner's  pioneering idea~\cite{MR77805}, random matrices are ubiquitously used
 to model complex quantum Hamiltonians. Most work deal with  the spectacular universality phenomenon of local eigenvalue statistics~\cite{MR0220494} but the applicability of  random matrix theory goes well beyond. The current paper has been motivated to understand the joint distribution of  the unitary operator \(e^{\ii t W}\), i.e.\ the quantum evolution  corresponding to a large \(N\times N\) Wigner matrix \(W\), at different, typically large times.
  
More generally, in our main result we compute the leading deterministic approximation
for the random quantity
\begin{equation}\label{fa}
\braket{f_1(W) A_1 f_2(W) A_2\ldots f_k(W)A_k},
\end{equation}
and we provide an optimal error bound on its fluctuation. Here \(f_i\)'s are Sobolev test-functions,
\(A_i\)'s are bounded deterministic matrices (observables) and
\(\braket{R} :=\frac{1}{N}\Tr R\) denotes the normalized trace of any matrix \(R\in\C^{N\times N}\).
 The deterministic approximation is a sum of several explicit terms, labelled by non-crossing partitions of \(k\) elements. Whenever all \(f_i=p_i\) are polynomials,
  such formulas are routinely generated in free probability theory 
  by evaluating \(\tau( p_1(s) a_1 p_2(s) a_2\ldots p_k(s)a_k)\)
  in a non-commutative \(*\)-algebra  \(\mathcal{A}\) with a tracial state \(\tau\), 
   where \(s\) is a semicircular
  element  and the set \(\{a_1, a_2, \ldots, a_k\}\) is \emph{freely independent} of \(s\).   Voiculescu's classical result~\cite{MR1094052} and its extensions from Gaussian (GUE) to general  Wigner matrices and to include deterministic matrices, see~\cite[Theorem~5.4.5]{MR2760897} and~\cite[Sect.~4,Thm.~20]{MR3585560}, assert that
 \begin{equation}\label{fa1}
\E \langle p_1(W) A_1 p_2(W) A_2\ldots p_k(W)A_k\rangle \to \tau( p_1(s) a_1 p_2(s) a_2\ldots p_k(s)a_k),
\end{equation}
where the \(k\)-tuple  \((a_1, \ldots, a_k)\in \mathcal{A}^k\) is the
distributional  limit of \((A_1, \ldots, A_k)\in (\C^{N\times N})^k\) as \(N\to\infty\).
Several independent Wigner matrices can also be considered on the left hand side; they are modelled
by freely independent semicircular elements in the right hand side.

 Our Theorem~\ref{theo:intgs} extends~\eqref{fa1} in several important directions. First, we can handle general Sobolev functions \(f_i\in H^2(\R)\) and not only polynomials  since we circumvent  the moment method used
 in free probability theory. We can even consider certain \(N\)-dependent functions living on mesoscopic scales.
 Second, we control the convergence in~\eqref{fa1} immediately in very high probability and not only in expectation,
 saving additional variance and high moment calculations typically performed separately with the moment method.
 This strengthening allows us to directly handle several independent random matrices instead of a single \(W\),
 just by simple conditioning; the similar extension in  the standard free probability approach requires considerably more
 sophisticated combinatorics. Third, we  obtain an optimal error term of order \(N^{-1}\) involving the \(k\)-th Sobolev norms
 of \(f_i\)  and we have a freedom to trade in weaker bounds for less smoothness assumption down to \(f_i\in H^2\). Fourth, we obtain similar deterministic approximations with optimal error terms not only for the normalized traces~\eqref{fa} but for all matrix elements \(\braket{\vx, f_1(W) A_1 \ldots f_k(W)A_k \vy}\) with 
 any deterministic vectors \(\vx,\vy\in \C^N\). Note that  individual matrix
 elements have no counterpart in the limiting algebra \(\mathcal{A}\), so they are beyond the scope of
 standard free probability theory. Finally, our deterministic approximations are obtained before the \(N\to \infty\) limit is taken, hence the convergence of the deterministic matrices \(A_1, \ldots, A_k\) is not required.

In our main applications  we  consider~\eqref{fa} with the exponential functions \(f_j(x) = e^{\ii s_j x}\) 
  and we are primarily interested in the decay of~\eqref{fa} for large times \(s_j\gg 1\).
 This problem has two related motivations originating from mathematical
physics and free probability theory, respectively, that we briefly explain.

The classical RAGE theorem~\cite[Section~5.4]{MR883643} for self-adjoint operators  \(H\) on an infinite dimensional Hilbert space \(\mathcal{H}\)  shows that   the Heisenberg 
time evolution \(A(t)= e^{\ii tH}A e^{-\ii tH}\) of  a compact operator \(A\)  asymptotically vanishes on 
any state \(\psi\in \mathcal{H}\) in the continuous spectral subspace of \(H\); more precisely  \(\langle \psi, A(t) \psi\rangle\)
tends to zero in Cesaro mean for large time \(t\). 
Since acting on a finite dimensional 
space, large \(N\times N\) Wigner matrices \(W\) 
 do not have continuous spectrum in a literal sense, but for many physical purposes they still behave 
as an operator with continuous spectrum; for example their eigenvectors are completely delocalized~\cite{MR2481753,MR2871147,2007.09585}. Hence the analogue of the RAGE theorem for 
Wigner matrices would assert that the matrix elements of  \(A(t):=e^{\ii tW}A e^{-\ii tW}\) at
any fixed deterministic  vectors \(\vx,\vy\in \C^N\) become
very close to their limiting value for large times,  i.e.
\begin{equation}\label{uAv}
\braket{\vx, A(t) \vy}\approx \braket{\vx,\vy}\braket{A}  \qquad \text{for}\quad t\gg 1.
\end{equation}
We  call this phenomenon \emph{thermalisation} as it corresponds to a decay to a certain equilibrium.
Similarly, for  two bounded deterministic matrices (observables) \(A\) and \(B\)  one  expects
that \(A(t)\) and \(B\) become thermalised, i.e. 
\begin{equation}\label{AB}
    \braket{ A(t) B}\approx \braket{ A}\braket{ B} \qquad \text{for}\quad t\gg 1.
 \end{equation}

 Exact equalities are not expected in~\eqref{uAv} and~\eqref{AB} even after the \(t\to \infty\) limit
 as a 
  consequence of the finite dimensionality.  Our Theorem~\ref{theo:intgs} in this context proves the thermalisation mechanism
  with a precise decay rate for large times, in particular we show  that
 \begin{equation}\label{AB1}
    \begin{split}
      \braket{\vx,A(t)\vy} &= \braket{\vx,\vy} \braket{A}+ \theta(t)^2\frac{\braket{\vx,\mathring A\vy}}{t^3} + \landauO*{N^\epsilon\frac{t^2}{N^{1/2}}} \\
      \braket{A(t)B} &= \braket{A}\braket{B} + \theta(t)^2\frac{\braket{\mathring A\mathring B}}{t^3} + \landauO*{N^\epsilon\frac{t^2}{N}}
    \end{split}
 \end{equation}
holds with very high probability for the oscillatory order one function \(\theta(t):=J_1(2t)t^{1/2}\), with $J_1$ a Bessel function of the first kind, and where \(\mathring A:=A-\braket{A}\) denotes the traceless part of \(A\). We thus obtain an \emph{approximate RAGE theorem} for Wigner matrices with a precise decay rate in time and with an \(N\)-dependent error bound due to the finite dimensionality of the system.  The effective error terms in~\eqref{AB1} allow for a simultaneous limit for large  \(N\) and  \(t\) in a certain range.  Interestingly, the inverse cubic decay rate stems from the square root singularity 
 of the Wigner semicircle density at the spectral edges. Since this square root behaviour is typical for
 the density of states in a large class of random matrix ensembles~\cite{MR3684307,MR4164728},
 the cubic decay rate is expected to be fairly universal. For brevity, in this paper we focus on the simplest Wigner
 case, deferring the more general ensembles to future work.
 
 We obtain similar thermalisation results for the multiple time evolutions of several  observables
 and identify the precise rate of time decay in each case.  
 The deterministic approximation 
has a hierarchical structure that allows us to identify the sector with 
the slowest (dominant) thermalisation rate.  We find that if some observables  or their products 
are traceless, the thermalisation is enhanced.  
 For example, if 
 \(B(s)= e^{\ii sW} B e^{-\ii sW}\) is the time evolution of another deterministic \(B\) matrix with \(s\gg1\), \(t-s\gg 1\) and \(C\) is a third observable, then we obtain
 \begin{equation}\label{ABC}
  \begin{split}
    \braket{ A(t) B(s) C} &= \braket{A}\braket{B}\braket{C} + \theta(s)^2\frac{\braket{A}\braket{\mathring B\mathring C}}{s^3} + \theta(t)^2\frac{\braket{B}\braket{\mathring A\mathring C}}{t^3} + \theta(t-s)^2\frac{\braket{C}\braket{\mathring A\mathring B}}{(t-s)^3}\\
    &\quad +\theta(s)\theta(t)\theta(t-s)\frac{\braket{\mathring A\mathring B \mathring C}}{s^{3/2}t^{3/2}(t-s)^{3/2}}+\landauO*{N^\epsilon \frac{t^3}{N}}
  \end{split}
 \end{equation}
 with very high probability. Note that the prevailing decay rate is strongly influenced by the possible vanishing of some of the numerators in~\eqref{ABC}. In particular,    if all three observables
  are traceless, \(\langle A\rangle =\langle B\rangle=\langle C\rangle =0\), and the large times \(t\), \(s\) and \(t-s\) are comparable, the decay rate is the \(\frac{9}{2}\)-th power of the time.

%
%

%

 Our second motivation 
 comes from  Voiculescu's  theorem~\cite[Theorem~5.4.5]{MR2760897} and~\cite[Sect.~4,Thm.~20]{MR3585560} (see also~\cite{MR1094052,MR1207936,MR1601878} for previous results)
 which asserts
  that independent  \(N\times N\) Wigner matrices, \(W_1, W_2, \ldots W_k\),
 are asymptotically free. This means that for any collection of polynomials \(p_1, p_2, \ldots, p_r\)
 that are (asymptotically) traceless, i.e.\ \(\braket{ p_j(W)} \to 0\) as \(N\to\infty\), we have 
 \begin{equation}\label{free}
   \braket{ p_1(W_{i_1}) p_2(W_{i_2}) \ldots p_r(W_{i_r}) } \to 0, \qquad \text{as} \quad N\to\infty,
 \end{equation}
  in expectation and almost surely, where the product is \emph{alternating} in the sense that \(i_1\ne i_2\), \(i_2\ne i_3, \ldots, i_{r-1}\ne i_r\). 
 
 The asymptotic freeness property~\eqref{free} of \emph{independent} Wigner matrices is a fundamental 
result that connects random matrices with free probability.  Using the thermalisation mechanism we
show that  not only independent Wigner matrices are asymptotically free, but different
long time evolutions by the \emph{very same} Wigner matrix  also make deterministic observables asymptotically free. 
More precisely, we show that the Heisenberg time evolutions of arbitrary deterministic observables, \(A_1(t_1), A_2(t_2),
\ldots, A_k(t_k)\) are asymptotically free whenever all time differences \(|t_i-t_j|\) are very large. Equivalently, we prove
 that with very high probability for any polynomials \(p_1,\ldots,p_r\)
\begin{equation}\label{AAA}
  \braket{ p_1(A_{i_1}(t_{i_1}))\cdots p_r(A_{i_r}(t_{i_r}))} \to 0 \quad \text{as}\quad N\to\infty\quad \text{and}\quad \min_{i\ne j}|t_i-t_j|\to \infty,
\end{equation}
whenever \(i_1\ne i_2\),  \(i_2\ne i_3, \ldots, i_{r-1}\ne i_r\) and \(\braket{p_j(A_{i_j}(t_{i_j}))}\to 0\) for all \(j=1,\ldots, k\). The precise statement with effective error bounds is given in  Corollary~\ref{corr free indep}. 
We stress that
the mechanism  to obtain asymptotic freeness via thermalisation in
~\eqref{AAA} is very different from the one behind~\eqref{free} relying on  independence. A freeness mechanism similar to ours was demonstrated for different powers of the \emph{same} Haar unitary matrix by Haagerup and Larsen in~\cite[Lemma~3.7]{MR1784419}.

 In order to understand~\eqref{fa},  we first derive a new \emph{multi-resolvent local law} 
 in Theorem~\ref{local law}, i.e.\ we identify the deterministic approximation of 
  \( G(z_1) A_1 G(z_2) A_2\ldots G(z_k)\)
 for the resolvents, \(G(z)= (W-z)^{-1}\), and then extend it to general Sobolev functions
 via the Helffer-Sj\"ostrand calculus. For a single resolvent the deterministic approximation \(G(z)\approx m(z) I\) is given by the unique scalar solution \(m=m(z)\) to the Dyson equation \(-1/m=m+z\), both in \emph{averaged sense}, \(\braket{ G(z)} \approx m(z)\), and in \emph{isotropic sense}, 
  \(\braket{\vx,  G(z)\vy} \approx m(z)\braket{\vx,\vy}\)
  for any vectors \(\vx,\vy\in \C^N\). 
   The  multi-resolvent local law is proven by recursively
 analysing a  system of self-consistent  equations that is  an adapted version of the deterministic Schwinger-Dyson equation obtained from second moment  Gaussian  calculation.
 The  fluctuation term in this approximation has been  estimated in our recent work~\cite{2012.13215}.
 Our approach also works in the mesoscopic regime, i.e.\ when the imaginary 
 part of the spectral parameter in \(G(z)\) is small as  a negative power of \(N\).
 In turn, this allows us to analyse the unitary  time evolution  \(e^{\ii tW}\) for very long, even \(N\)-dependent,
 times. The mesoscopic regime, however, requires to identify a multiple cancellation
 effect in the deterministic approximation. Amusingly, we need two very different, but eventually 
 equivalent formulas for this approximation; 
 one is based on non-crossing graphs (see Lemma~\ref{lemma q} later) and arises naturally from the recursive structure of the Dyson equation. The other one from~\eqref{M 1..k alt} is a partial resummation of the first one in terms of non-crossing partitions and the free cumulant function of divided differences of \(m\), manifesting the cancellation.

Voiculescu's theorem~\eqref{fa1} or 
asymptotic freeness in the form~\eqref{free} for independent Wigner matrices 
has traditionally been proven with the moment method using  very involved combinatorics.
It efficiently handles polynomials of fixed degree as stated in~\eqref{free} and can be extended
to general functions by polynomial approximation. However,  to obtain effective controls  (e.g  explicit speed of convergence)
or possibly $N$ dependent test functions (like mesoscopic linear statistics)
usually  requires high (\(N\)-dependent) degree for the polynomials that,
 in turn,
are increasingly difficult for the moment method as well as for the analytic subordination method~\cite{MR3717087}. 
Thus the extension of the  moment method 
to more general functions has natural  limitations, although there is a remarkable recent development 
for rational functions~\cite{MR3718048,MR3967396,MR3730345,2103.05962}. 
The trace of a smooth cut-off function of a polynomial in GUE and deterministic matrices 
has been analysed via the Master equation and linearization
  in~\cite{MR2183281,MR3000553} for the purpose of identifying the
norm of the polynomial. Recently general smooth functions  were considered in the same setup with a new interpolation method between the GUE matrices and their infinite dimensional limits, the semicircular elements~\cite{1912.04588}. A large \(N\)-expansion to  arbitrary order was also obtained~\cite{2011.04146}. We follow a  different route 
via the local laws for resolvents that works for general Wigner matrices and also for matrix elements, it handles mesoscopic regimes  very efficiently and it yields optimal control in very high probability sense offering an alternative
to the customary free probability approach.

\subsection*{Notation and conventions}
We introduce some notations we use throughout the paper. For integers \(l,k\in\N \) we use the notations \([k]:= \set{1,\ldots, k}\), and
\[ 
[k,l):=\{ k, k+1,\ldots, l-1\}, \qquad [k,l]:=\{ k, k+1,\ldots, l-1, l\}
\]
for \(k< l\). For positive quantities \(f,g\) we write \(f\lesssim g\) and \(f\sim g\) if \(f \le C g\) or \(c g\le f\le Cg\), respectively, for some constants \(c,C>0\) which depend only on the constants appearing in~\eqref{eq:momentass}. We denote vectors by bold-faced lower case Roman letters \({\bm x}, {\bm y}\in\C ^N\), for some \(N\in\N\). Vector and matrix norms, \(\norm{\vx}\) and \(\norm{A}\), indicate the usual Euclidean norm and the corresponding induced matrix norm. For any \(N\times N\) matrix \(A\) we use the notation \(\braket{ A}:= N^{-1}\Tr  A\) to denote the normalized trace of \(A\). Moreover, for vectors \({\bm x}, {\bm y}\in\C^N\) we define
\[ \braket{ {\bm x},{\bm y}}:= \sum \overline{x}_i y_i, \qquad A_{\vx\vy}:=\braket{\vx,A\vy},\]
with \(A\in\C^{N\times N}\). For any $z\in\mathbf{C}$, by $\Re z$ and $\Im z$ we denote the real and imaginary part of $z$, respectively. We will use the concept of ``with very high probability'' meaning that for any fixed \(D>0\) the probability of the \(N\)-dependent event is bigger than \(1-N^{-D}\) if \(N\ge N_0(D)\). Moreover, we use the convention that \(\xi>0\) denotes an arbitrary small constant which is independent of \(N\).

\subsubsection*{Acknowledgement} 
The authors are very grateful to Roland Speicher for useful correspondence on the problem and pointing out additional references. The authors also thank the anonymous referees whose comments significantly improved the readability of the manuscript, and also Jana Reker for carefully reading the manuscript and spotting several typos.

\section{Main results}
We consider real symmetric or complex Hermitian \(N\times N\) Wigner matrices \(W\). We formulate the following assumptions on the entries of \(W\). 
\begin{assumption}\label{ass:entr}
  We assume that the matrix elements \(w_{ab}\) are independent up to the Hermitian symmetry \(w_{ab}=\overline{w_{ba}}\) and identically distributed in the sense that \(w_{ab}\stackrel{\mathrm{d}}{=} N^{-1/2}\chi_{\mathrm{od}}\), for \(a<b\), \(w_{aa}\stackrel{\mathrm{d}}{=}N^{-1/2} \chi_{\mathrm{d}}\), with \(\chi_{\mathrm{od}}\) being a real or complex random variable and \(\chi_{\mathrm{d}}\) being a real random variable such that \(\E \chi_{\mathrm{od}}=\E \chi_{\mathrm{d}}=0\) and \(\E |\chi_{\mathrm{od}}|^2=1\). In the complex case we also assume that \(\E \chi_{\mathrm{od}}^2\in\R\). In addition, we assume the existence of the high moments of \(\chi_{\mathrm{od}}\), \(\chi_{\mathrm{d}}\), i.e.\ that there exist constants \(C_p>0\), for any \(p\in\N \), such that
  \begin{equation}\label{eq:momentass}
    \E \abs*{\chi_{\mathrm{d}}}^p+\E \abs*{\chi_{\mathrm{od}}}^p\le C_p.
  \end{equation}
\end{assumption}
Our main result is the asymptotic evaluation of products of multiple time-evolved observables \(e^{-\ii t W}Ae^{\ii t W}\) for general deterministic matrices \(A\). More generally, we prove that alternating products of functions of Wigner and deterministic matrices like \(\braket{f(W)Ag(W)B\dots}\) with high probability concentrate around a deterministic limit which we compute explicitly. In order to state the result we first introduce \emph{non-crossing partitions}~\cite{MR309747} and related objects.  
\begin{definition}[Lattice of non-crossing partitions]
  Let \(S\subset\N\) be a finite set of integers. We call a partition \(\cP\) of the set \(S\) \emph{crossing} if there exist blocks \(B\ne B'\in\cP\) with \(a,b\in B\), \(c,d\in B'\) and \(a<c<b<d\), otherwise we call it \emph{non-crossing} and we denote the set of non-crossing partitions by \(\NCP(S)\). For each non-crossing partition \(\cP=\set{B_1,\ldots,B_n}\in\NCP(S)\) we denote the number of blocks in the partition by \(\abs{\cP}:=n\).

  We define a partial order \(\le\) on \(\NCP(S)\), the \emph{refinement order}, such that \(\cP\le \cQ\) if and only if \(\cP\) is a refinement of \(\cQ\), i.e.\ if for each \(B\in\cP\) there exists \(B'\in\cQ\) such that \(B\subset B'\). The partially ordered set \((\NCP(S),\le)\) is in fact a lattice as any two \(\cP,\cQ\in\NCP(S)\) admit unique least upper and greatest lower bounds \(\cP\vee\cQ,\cP\wedge\cQ\in\NCP(S)\). Moreover, there exist unique maximal and minimal elements \(0_S,1_S\in\NCP(S)\) defined by \(0_S:=\set{\set{a}\given a\in S}\) and \(1_S:=\set{S}\). 
\end{definition} 
The following definition is combinatorially identical to the definition of free cumulants of random variables in free probability in terms of the trace functional (see~\cite[Section 4]{MR1268597} or~\cite{MR3374523} for connections with classical, Boolean and monotone cumulants). 
\begin{definition}[Free cumulant function]\label{m kappa def}
  Fix \(k\in\N\) and let \(f\colon 2^{[k]}\to\C\) be a function mapping subsets of \([k]\) to scalars. We then implicitly define the \emph{free cumulant function of \(f\)} as the unique map \(f_\circ\colon 2^{[k]}\to \C\) satisfying that for any \(S\subset[k]\) we have 
  \begin{equation}\label{eq m kappa def}
    f[S] = \sum_{\cP\in\NCP(S)} \prod_{B\in \cP} f_\circ[B].
  \end{equation}
\end{definition}
The implicit relation~\eqref{eq m kappa def} in Definition~\ref{m kappa def} can be recursively turned into an explicit definition of \(f_\circ\). Indeed, for \(\abs{S}=1\) the relation~\eqref{eq m kappa def} implies \(f_\circ[i]=f[i]\), so that using~\eqref{eq m kappa def} for \(\abs{S}=2\) it follows that \(f_\circ[i,j]=f[i,j]-f[i]f[j]\). For general \(S\subset [k]\) the free cumulant function can be written explicitly as 
\begin{equation}\label{f circ expl}
  f_\circ[S]=\sum_{\cP\in\NCP(S)} \mu(\cP,1_S)\prod_{B\in \cP} f[B], \quad \mu(\cP,\cQ):= \begin{cases}
    1, & \cP=\cQ,\\ -\sum_{\pi<\nu\le \sigma} \mu(\nu,\sigma),& \pi<\sigma,
  \end{cases}
\end{equation}
in terms of the \emph{M\"obius function} \(\mu\colon\set{(\pi,\sigma)\given \pi\le \sigma\in\NCP(S)}\to \Z\), see Lemma~\ref{mobius expl} later for an alternative non-recursive definition. We note that for \(\abs{S}>3\) the M\"obius function depends on the elements of the blocks and not only 
on the block sizes of \(\cP\), e.g.\ \(\mu(\set{12|3|4},\set{1234})=2\ne 1=\mu(\set{13|2|4},\set{1234})\). This is because the concept of non-crossing partition relies on the ordering of \(\N\); the M\"obius function for the lattice of \emph{all} partitions would be a function of the 
block sizes alone.

Non-crossing partitions have an alternative geometrical definition. Arrange the elements of \(S\) equidistantly in counter-clockwise order on the circle and for each \(B\in\cP\) consider the \(P_B\) convex hull of the points \(x\in B\). Then \(\cP\) is non-crossing if and only if the polygons \(\set{P_B\given B\in\cP}\) are pairwise disjoint, see Figure~\ref{example partitions} for some examples (note that the partition in Figure~\eqref{ex19b} is a refinement of the partition in Figure~\ref{ex19a}). We note that for any \(\cP\in\NCP(S)\) the complement \(\DD\setminus\cup_{B\in\cP}P_B\) of the polygons \(P_B,B\in\cP\) in the disk \(\DD\) has \(\abs{S}-\abs{\cP} +1\) connected components. The geometrical interpretation is particularly useful for defining the Kreweras complement of non-crossing partitions~\cite{MR309747}.

\begin{figure}[htbp]
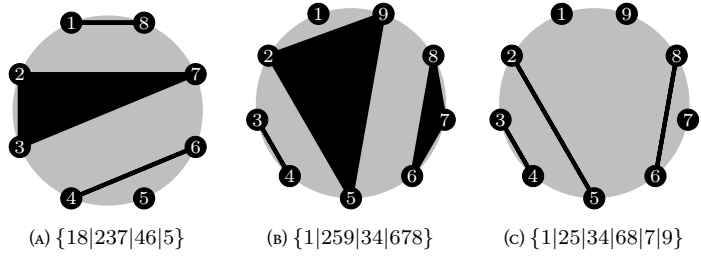

\centering
\subcaptionbox{\(\set{18|237|46|5}\)\label{ex18}}[.25\linewidth]{
  \circEmb[ptwidth=8pt,number=8,parts={{2,3,7},{4,6},{1,8}},vlabel=1,radius=4em]{}}
\subcaptionbox{\(\set{1|259|34|678}\)\label{ex19a}}[.25\linewidth]{
  \circEmb[ptwidth=8pt,number=9,parts={{2,5,9},{3,4},{6,7,8}},vlabel=1,radius=4em]{}}
\subcaptionbox{\(\set{1|25|34|68|7|9}\)\label{ex19b}}[.25\linewidth]{
  \circEmb[ptwidth=8pt,number=9,parts={{2,5},{3,4},{6,8}},vlabel=1,radius=4em]{}}
\caption{Some partitions together with the corresponding disjoint convex hulls}\label{example partitions}
\end{figure} 

\begin{definition}[Kreweras complement]\label{kreweras}
  Let \(S\subset\N\) be a set of integers equidistantly arranged in counter-clockwise order on the circle and label the arcs between the points also by \(S\) in such a way that the arc \(x\) succeeds the point \(x\) in counter-clockwise order. Then for \(\cP\in\NCP(S)\) we define the \emph{Kreweras complement} \(K(\cP)\in\NCP(S)\) such that \(x,y\in S\) belong to the same block of \(K(\pi)\) if and only if the arcs \(x,y\) are in the same connected component of \(\DD\setminus\cup_{B\in\cP}P_B\).
\end{definition}
In Figure~\ref{ex dual part} we give two examples of partitions and their Kreweras complements. We note that \(\abs{\pi}+\abs{K(\pi)}=\abs{S}+1\) for any \(\cP\in\NCP(S)\). Moreover, \(K^2=K\circ K\) is simply a rotation in the sense that \(K^2(\cP)\) is the partition where for \(S=\set{s_1,\ldots,s_n}\) the elements in each block of \(\cP\) are shifted by \(s_1\mapsto s_2\mapsto \cdots \mapsto s_n\mapsto s_1\). In particular the map \(K\) on \(\NCP(S)\) is invertible. 
\begin{figure}[htbp]
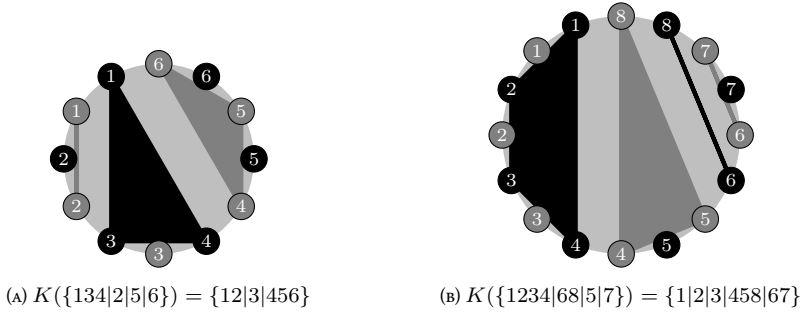

\centering
\subcaptionbox{\(K(\set{134|2|5|6})=\set{12|3|456}\)\label{ex21}}[.48\linewidth]{
    \circEmb[radius=4em,number=6,dual=1,vlabel=1,ptwidth=10pt,part={1,3,4},eparts={{1,2},{4,5,6}}]{}} 
\subcaptionbox{\(K(\set{1234|68|5|7})=\set{1|2|3|458|67}\)\label{ex22}}[.48\linewidth]{\circEmb[radius=5em,number=8,dual=1,vlabel=1,ptwidth=10pt,parts={{1,2,3,4},{6,8}},eparts={{4,5,8},{6,7}}]{}}
\caption{Example of partitions (in black) and their Kreweras complements (in dark gray). 
}\label{ex dual part}
\end{figure}


We are now ready to state our main result. We define the semicircular average of any function \(f\colon[-2,2]\to\C\) as 
\[\braket{f}_\mathrm{sc}:=\int_{-2}^2 f(x)\rho_\mathrm{sc}(x)\dif x,\quad \rho_\mathrm{sc}(x):=\frac{\sqrt{4-x^2}}{2\pi}.\]
Furthermore, we define the \(\cP\)-\emph{partial}-\emph{trace} and the \(\cP\)-\emph{trace} for partitions \(\cP\in\NCP[k]\).
\begin{definition}\label{def partial trace}Let \((\mathcal{A},\braket{\cdot})\) be a complex tracial algebra. Then for \(k\in\N\) and \(\cP\in\NCP[k]\) we define the \emph{\(\cP\)-partial-trace}\footnote{An analogous partial trace is used in free probability theory for the representation of operator valued conditional expectations, c.f.\ for example the formula for $\widetilde \phi_\sigma$ above \cite[Theorem~19]{MR3585560}.} \(\pTr_\cP\colon \mathcal{A}^{k-1}\to\mathcal{A}\) as 
  \begin{equation}\label{pi trace}
    \begin{split}
      \braket{A_1,\ldots,A_k}_\pi &:= \prod_{B\in\pi}\braket[\Big]{\prod_{j\in B}A_j},\\
      \pTr_\cP(A_1,\ldots,A_{k-1}) &:=  \biggl( \prod_{j\in B(k)\setminus\set{k}} A_j\biggr) \prod_{\substack{B\in\cP\\B\not\ni k}} \braket*{\prod_{j\in B} A_j}, 
    \end{split}
  \end{equation}
  where \(B(k)\in\cP\) denotes the block containing \(k\), and all products are ordered increasingly in the indices. 
\end{definition}
\begin{theorem}\label{theo:intgs}
  Let \(k\ge 2\), let \(A_1,\ldots, A_k\) be deterministic matrices with \(\norm{A_i}\lesssim 1\), and let \(f_1,\ldots,f_k\) be Sobolev functions \(f_i\in H^k([-3,3])\) normalised such that \(\norm{f_i}_{L^\infty}\sim 1\). Then for any \(\xi>0\) and any deterministic vectors \(\vx,\vy\) with \(\norm{\vx}+\norm{\vy}\lesssim 1\) we have  
  \begin{align}\nonumber
      \braket{f_1(W)A_1 \ldots f_k(W)A_k } &= \sum_{\pi\in\NCP[k]}\braket{A_1,\ldots,A_k}_{K(\pi)}\prod_{B\in \pi} \scf_\circ[B]   +\landauO*{ N^\xi\frac{\max_i\norm{f_i}_{H^k}}{N}}\\\label{eq:niceintform}
      \braket{\vx,f_1(W)A_1 \ldots f_k(W)\vy } &= \sum_{\pi\in\NCP[k]}\braket{\vx,\pTr_{K(\pi)}(A_1,\ldots,A_{k-1})\vy} \prod_{B\in \pi} \scf_\circ[B]  \\\nonumber
      &\qquad + \landauO*{ N^\xi\frac{\max_i\norm{f_i}_{H^k}}{N^{1/2}}}
  \end{align}
  with very high probability,  where \(\scf_\circ\) is the free cumulant function from Definition~\ref{m kappa def} of \(\scf[i_1,\ldots,i_n]:=\braket{f_{i_1}f_{i_2}\cdots f_{i_n}}_\mathrm{sc}\). For \(k=1\) the same result holds with \(f\in H^k\) and \(\norm{\cdot}_{H^k}\) replaced by \(f\in H^2\) and \(\norm{\cdot}_{H^2}\), respectively. A straightforward generalization of~\eqref{eq:niceintform} to include several independent Wigner 
  matrices is given in Extension~\ref{rmk:manyW}. 
\end{theorem}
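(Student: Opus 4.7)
The plan is to bootstrap from the multi-resolvent local law (Theorem~\ref{local law}) to general Sobolev functions via Helffer-Sj\"ostrand functional calculus and then to match the resulting combinatorial expression with the right-hand side of~\eqref{eq:niceintform}. First, for each \(f_i\in H^k([-3,3])\) I construct a compactly supported almost-analytic extension \(\widetilde f_i\) to a complex neighbourhood of \([-3,3]\) such that \(\bar\partial \widetilde f_i(z)\) vanishes to order \(k-1\) as \(\Im z\to 0\). Helffer-Sj\"ostrand then gives \(f_i(W)=\pi^{-1}\int_\C \bar\partial \widetilde f_i(z)G(z)\dif^2 z\), and multiplying the representations together yields
\begin{equation*}
  \braket{f_1(W)A_1\cdots f_k(W)A_k}=\pi^{-k}\int_{\C^k}\prod_{i=1}^k\bar\partial\widetilde f_i(z_i)\braket{G(z_1)A_1\cdots G(z_k)A_k}\prod_{i=1}^k\dif^2 z_i,
\end{equation*}
with the analogous representation for the isotropic quantity.

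Next I substitute the deterministic approximation from Theorem~\ref{local law} in the Kreweras-partition form~\eqref{M 1..k alt},
\begin{equation*}
  M(z_1,A_1,\ldots,z_k)=\sum_{\pi\in\NCP[k]}\braket{A_1,\ldots,A_k}_{K(\pi)}\prod_{B\in\pi}m_\circ[\set{z_j:j\in B}],
\end{equation*}
where \(m_\circ\) is the free cumulant function (Definition~\ref{m kappa def}) of the divided differences of \(m\). Using the integral representation \(m[z_{i_1},\ldots,z_{i_n}]=\int_{-2}^2\rho_\mathrm{sc}(x)\prod_j(x-z_{i_j})^{-1}\dif x\) and applying the Helffer-Sj\"ostrand formula \(n\) times gives the elementary identity
\begin{equation*}
  \pi^{-n}\int\prod_{j=1}^n\bar\partial\widetilde f_{i_j}(z_{i_j})\,m[z_{i_1},\ldots,z_{i_n}]\prod_j\dif^2 z_{i_j}=\braket{f_{i_1}\cdots f_{i_n}}_\mathrm{sc}=\scf[\set{i_1,\ldots,i_n}].
\end{equation*}
Since \(m_\circ[B]\) is, by the explicit M\"obius formula~\eqref{f circ expl}, a linear combination of products of the values \(\set{m[B']:B'\subseteq B}\), and each such \(m[B']\) integrates to \(\scf[B']\) under the rescaled measure \(\pi^{-\abs{B'}}\prod_{j\in B'}\bar\partial\widetilde f_j\dif^2 z_j\), the M\"obius inversion propagates the conversion to free cumulants, producing \(\scf_\circ[B]\) from \(m_\circ[B]\). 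The product over blocks of \(\pi\) then reconstructs exactly the main term on the right-hand side of~\eqref{eq:niceintform}, and the isotropic case is identical, using the isotropic version of Theorem~\ref{local law} together with the \(\pTr_{K(\pi)}\) structure. The local-law error, schematically of size \(N^{-1}\prod_i\abs{\Im z_i}^{-\alpha_i}\) (respectively \(N^{-1/2}\) in the isotropic case), is absorbed by pairing its singularity near the real axis against the order-\((k-1)\) vanishing of \(\bar\partial \widetilde f_i\); this is a standard \(H^k\)-weighted integration argument and produces the claimed error bounds.

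The main obstacle is Theorem~\ref{local law} itself. A multi-resolvent local law that survives integration against \(\bar\partial\widetilde f\) requires sharp bounds in the mesoscopic regime where \(\abs{\Im z_i}\) can be as small as \(N^{-1+\epsilon}\); achieving this demands a recursive Schwinger-Dyson analysis identifying a subtle cancellation between the two equivalent descriptions of \(M\). The non-crossing-graph form of Lemma~\ref{lemma q} arises directly from the Dyson recursion but is individually singular at small imaginary parts, while the Kreweras resummation~\eqref{M 1..k alt} manifests the cancellation that yields a uniform bound. Proving the algebraic equivalence of these two formulas, although purely combinatorial, is nontrivial: it relies on M\"obius inversion on \(\NCP[k]\) together with the geometric properties of Kreweras duality, and it is this interplay between recursion-adapted and cancellation-adapted representations of \(M\) that is the technical heart of the argument.
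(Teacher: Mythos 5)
Your plan follows the same route as the paper: Helffer--Sj\"ostrand to convert $f_i(W)$ into integrals of resolvents, substitution of the multi-resolvent deterministic approximation $M_{[k]}$ from Theorem~\ref{local law}, and an integral identity turning the divided differences $m[\cdot]$ into semicircular averages $\scf[\cdot]$ (the paper's Lemma~\ref{lem:divdiffinths}, based precisely on the kernel representation $m[z_1,\ldots,z_p]=\int\rho_{\mathrm{sc}}(x)\prod_j(x-z_j)^{-1}\dif x$). You also correctly point to the multi-resolvent local law and the algebraic equivalence between the non-crossing-graph and partition representations of $M_{[k]}$ as the technical core; this matches the paper. One small slip: the Kreweras-partition formula you quote is actually~\eqref{eq M1..k def}, not~\eqref{M 1..k alt}; the latter is the non-crossing-\emph{graph} resummation with the $q_E$ weights.

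Where your sketch is materially incomplete is the error analysis, which you dismiss as a ``standard $H^k$-weighted integration argument''. Two concrete issues. First, if you simply integrate the local-law error $N^{-1}\eta_*^{-k}$ (resp.\ $N^{-1/2}\eta_*^{1/2-k}$) against all $k$ factors $\abs{\bar\partial(f_i)_\C(z_i)}\sim\eta_i^{k-1}\abs{f_i^{(k)}(x_i)}$, you end up with a \emph{product} $\prod_i\norm{f_i}_{H^k}$, which can be exponentially larger than the claimed $\max_i\norm{f_i}_{H^k}$ (recall only $\norm{f_i}_{L^\infty}\sim1$ is normalised, not the Sobolev norms). The paper avoids this by Stokes-theorem contour deformation~\eqref{eq:stokes}--\eqref{eq:ineeduse}: after fixing an ordering of the $\eta_i$, one deforms $k-1$ of the variables to the boundary $\eta_0=N^{-1+\xi/2}$, which trades $\bar\partial(f_j)_\C$ for $(f_j)_\C(\cdot+\ii\eta_0)$ with $O(1)$ $L^1$-norm, so only \emph{one} Sobolev norm survives. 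Second, the region where some $\abs{\eta_i}$ is below $\eta_0$ (down to $\eta_r=N^{-100}$ and beyond) is not controlled by the local law at all; the paper handles it separately via~\eqref{eq:prelred}--\eqref{eq:intbpbettb} using eigenvalue rigidity to justify the cutoff to $H^k_0([-3,3])$, a Cauchy--Schwarz/Ward identity bound on the resolvent chain (\eqref{eq:needisoaswell}), and again Stokes for the remaining variables. Both of these steps are genuine parts of the argument and neither follows from a generic weighted integration statement, so they should be supplied rather than asserted.
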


Note that by eigenvalue rigidity (see e.g.~\cite[Theorem 7.6]{MR3068390} or~\cite{MR2871147}) the spectrum of \(W\) is contained in \([-2-\epsilon,2+\epsilon]\), for any small \(\epsilon>0\), with very high probability, hence \(f(W)\) is well defined for \(f\in H^k([-3,3])\),
i.e. functions defined only on $[-3,3]$. In fact, without loss of generality, we may assume that \(f\in H_0^k([-3,3])\)
by multiplying the original $f$ with a smooth cutoff function without changing $f(W)$. 
  (see Section~\ref{sec:mresnew} for more details).

\begin{remark}  The average version of~\eqref{eq:niceintform} 
for polynomial test functions \(f_i\) has a long history.  In expectation sense the first result of this
type was proved  for several  independent Gaussian 
(GUE) random matrices with \(A_i=I\) in  Voiculescu's 
seminal paper~\cite{MR1094052};  later upgraded  to 
almost sure convergence in~\cite{MR1601878}. The extension to Wigner matrices with
general entry distribution  as well as the inclusion of
special block diagonal deterministic matrices  \(A_i\) was achieved in~\cite{MR1207936}.
The  case with arbitrary deterministic matrices  can be found in~\cite[Theorem~5.4.5]{MR2760897} and~\cite[Sect.~4,Thm.~20]{MR3585560} (see also~\cite{MR3711884} under relaxed moment conditions
 on the entry distribution). The only results beyond polynomials are in~\cite{MR3730345, 2103.05962} for certain class of rational functions; general Sobolev functions have not been considered before the current work. Furthermore, the isotropic version of~\eqref{eq:niceintform} is new even for polynomials.

 We note that in the language of free probability theory the r.h.s.\ of~\eqref{eq:niceintform} can be interpreted as follows. If the deterministic matrices \(A_i\) converge in the sense of moments to some elements \(a_i\in\mathcal{A}\), 
 \[A_i\overset{\mathrm{distr}}{\longrightarrow}a_i,\quad \text{as}\quad  N\to\infty,\] 
 of some non-commutative probability space \((\mathcal{A},\phi)\), then we asymptotically have
 \[\braket{f_1(W)A_1\cdots f_k(W)A_k}\to \phi(f_1(s)a_1\cdots f_k(s)a_k)\]
 for a semicircular element \(s\in\mathcal A\) freely independent of \(a_1,\ldots,a_k\).
\end{remark}
%

\begin{example}\label{example theo}
For \(k=1,2,3\) the deterministic approximation \(F_k\) of \(f_1(W)A_1\cdots f_k(W)\) in~\eqref{eq:niceintform} is given as follows. The deterministic approximations of \(\braket{f_1(W)A_1\cdots f_k(W)A_k}\) follow by multiplying the expressions below by \(A_k\) and taking the trace. 
\begin{itemize}
\item[\((k=1)\)] Here we simply have \(F_1=\braket{f_1}_\mathrm{sc}\) since \(\scf_\circ[1]=\braket{f_1}_\mathrm{sc}\).
\item[\((k=2)\)] We have
\[
\begin{split}
  F_2&=A_1\scf_\circ[1]\scf_\circ[2]+\braket{A_1}\scf_\circ[1,2]\\
  &=A_1\braket{f_1}_\mathrm{sc}\braket{f_2}_\mathrm{sc}+\braket{A_1}(\braket{f_1f_2}_\mathrm{sc}-\braket{f_1}_\mathrm{sc}\braket{f_2}_\mathrm{sc})
\end{split}\]
using
\(\scf_\circ[1,2]=\braket{f_1f_2}_\mathrm{sc}-\braket{f_1}_\mathrm{sc}\braket{f_2}_\mathrm{sc}\).
\item[\((k=3)\)] For \(k=3\) there are five terms (corresponding to the non-crossing partitions visualised in Figure~\ref{ex3})
\[\begin{split}
  F_3&= A_1A_2 \scf_\circ[1]\scf_\circ[2]\scf_\circ[3]+ 
  \braket{A_1A_2}\scf_\circ[1,3]\scf_\circ[2] + A_1 \braket{A_2}\scf_\circ[1]\scf_\circ[2,3] \\
  &\quad + A_2\braket{A_1}\scf_\circ[1,2]\scf_\circ[3] + \braket{A_1}\braket{A_2}\scf_\circ[1,2,3]
\end{split}\] 
with 
\[
\begin{split}
  \scf_\circ[1,2,3] = \braket{f_1f_2f_3}_\mathrm{sc}-\braket{f_1f_2}_\mathrm{sc}\braket{f_3}_\mathrm{sc}-\braket{f_1f_3}_\mathrm{sc}\braket{f_2}_\mathrm{sc}-\braket{f_2f_3}_\mathrm{sc}\braket{f_1}_\mathrm{sc}+2\braket{f_1}_\mathrm{sc}\braket{f_2}_\mathrm{sc}\braket{f_3}_\mathrm{sc}
\end{split}\]
and \(\scf_\circ[i],\scf_\circ[i,j]\) as before. 
\end{itemize}
\end{example}
\begin{figure}[htbp]
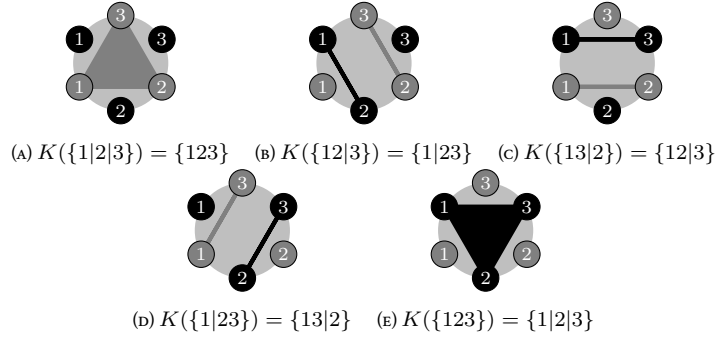

  \centering
  \subcaptionbox{\(K(\set{1|2|3})=\set{123}\)\label{ex31}}[.25\linewidth]{
    \circEmb[radius=2em,number=3,dual=1,ptwidth=10pt,epart={1,2,3},vlabel=1]{}}
  \subcaptionbox{\(K(\set{12|3})=\set{1|23}\)\label{ex32}}[.25\linewidth]{
    \circEmb[radius=2em,number=3,dual=1,ptwidth=10pt,epart={2,3},part={1,2},vlabel=1]{}}
  \subcaptionbox{\(K(\set{13|2})=\set{12|3}\)\label{ex33}}[.25\linewidth]{
    \circEmb[radius=2em,number=3,dual=1,ptwidth=10pt,epart={1,2},part={1,3},vlabel=1]{}}
  \subcaptionbox{\(K(\set{1|23})=\set{13|2}\)\label{ex34}}[.25\linewidth]{
    \circEmb[radius=2em,number=3,dual=1,ptwidth=10pt,epart={1,3},part={2,3},vlabel=1]{}}
  \subcaptionbox{\(K(\set{123})=\set{1|2|3}\)\label{ex35}}[.25\linewidth]{
    \circEmb[radius=2em,number=3,dual=1,ptwidth=10pt,part={1,2,3},vlabel=1]{}}
  \caption{List of all non-crossing partitions \(\NCP([3])\) on three vertices (in black) together with their Kreweras complement (in dark gray).}\label{ex3}
\end{figure}

\subsection{Thermalisation and asymptotic freeness}
We now specialise Theorem~\ref{theo:intgs} to the functions \(f(x):=e^{\ii s x}\), with \(s\in\mathbf{R}\), and define
\begin{equation}\label{eq:defphi}
  \varphi(s):=\braket{e^{\ii s \cdot}}_\mathrm{sc}=\int_{-2}^2\rho_\mathrm{sc}(x)e^{\ii s x}\dif x=\frac{J_1(2s)}{s},
\end{equation} 
where \(J_1\) is a Bessel function of the first kind. 
We note that by standard asymptotic of the Bessel function we have
\begin{equation}\label{eq:largeasj}
   J_1(x)=-\cos\Bigl(x+\frac{\pi}{4}\Bigr)\sqrt{\frac{2}{\pi x}}+\landauO*{\frac{1}{x^{3/2}}}, \qquad \text{for} \quad x\gg 1.
\end{equation}

\begin{corollary}\label{corr:intgs}
Let \(k\ge 2\), \(s_1,\ldots,s_k\in\R\) and let \(A_1,\ldots, A_k,\vx,\vy\) be deterministic matrices and vectors with \(\norm{A_i}\lesssim 1\) and \(\norm{\vx}+\norm{\vy}\lesssim1\). Then 
\begin{align}\nonumber
\braket{e^{\ii s_1 W}A_1\cdots A_{k-1}e^{\ii s_k W}A_k}&=\sum_{\pi\in \NCP[k]}\braket{A_1,\ldots,A_k}_{K(\pi)}\prod_{B\in \pi} \varphi_\circ [B] +\landauO*{ N^\xi \frac{\max_i\abs{s_i}^k}{N}},\\
\label{eq:niceintform corr}
\braket{\vx, e^{\ii s_1 W}A_1\cdots A_{k-1}e^{\ii s_k W}\vy}&=\sum_{\pi\in \NCP[k]}\braket{{\bm x},\pTr_{K(\pi)}(A_1,\ldots,A_{k-1}){\bm y}}\prod_{B\in \pi} \varphi_\circ [B] \\\nonumber
&\qquad +\landauO*{ N^\xi \frac{\max_i\abs{s_i}^k}{N^{1/2}}},
\end{align}
with very high probability for any \(\xi>0\), where \(\varphi_\circ\) is the free cumulant function from Definition~\ref{m kappa def} of \(\varphi[i_1,\ldots,i_n]:=\varphi(s_{i_1}+\cdots+s_{i_n})\), with \(\varphi\) being defined in~\eqref{eq:defphi}. 
\end{corollary}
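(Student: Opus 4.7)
The plan is to deduce the corollary directly from Theorem~\ref{theo:intgs} applied to the specific choice $f_j(x) := e^{\ii s_j x}$, essentially by checking that the Sobolev norms and the semicircular averages take the claimed form. There is no substantial new random matrix input required.

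First I would address the domain issue: $f_j(x) = e^{\ii s_j x}$ is defined on all of $\R$, but Theorem~\ref{theo:intgs} requires functions in $H^k([-3,3])$. I would fix a smooth cutoff $\chi \in C_c^\infty(\R)$ supported in $[-3,3]$, identically $1$ on $[-5/2,5/2]$, and replace $f_j$ by $\widetilde f_j := \chi \cdot f_j \in H_0^k([-3,3])$. By eigenvalue rigidity (cited in the paragraph after Theorem~\ref{theo:intgs}), $\Spec(W) \subset [-5/2,5/2]$ with very high probability, so $\widetilde f_j(W) = f_j(W) = e^{\ii s_j W}$ on this event and the replacement is costless for the isotropic and averaged quantities on the left-hand side of~\eqref{eq:niceintform corr}. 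Clearly $\norm{\widetilde f_j}_{L^\infty} \sim 1$, and Leibniz gives
\begin{equation*}
  \norm{\widetilde f_j}_{H^k} \;\lesssim\; \sum_{\ell=0}^{k} \norm{\chi^{(k-\ell)}}_{L^2([-3,3])}\, \abs{s_j}^\ell \;\lesssim\; 1 + \abs{s_j}^k,
\end{equation*}
so that $\max_j \norm{\widetilde f_j}_{H^k} \lesssim 1 + \max_j \abs{s_j}^k$, matching the error term in~\eqref{eq:niceintform corr}.

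Next I would identify the combinatorial coefficients. For any block $B = \set{i_1,\ldots,i_n} \subset [k]$ with $i_1 < \cdots < i_n$, the product of exponentials satisfies
\begin{equation*}
  \braket{\widetilde f_{i_1} \widetilde f_{i_2}\cdots \widetilde f_{i_n}}_\mathrm{sc} \;=\; \int_{-2}^{2} e^{\ii (s_{i_1}+\cdots+s_{i_n}) x}\,\rho_\mathrm{sc}(x)\,\dif x \;=\; \varphi(s_{i_1}+\cdots+s_{i_n}),
\end{equation*}
where $\chi$ disappears because $\supp \rho_\mathrm{sc} \subset [-2,2]$ and $\chi \equiv 1$ there. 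Hence the function $\scf[\cdot]$ in Theorem~\ref{theo:intgs} agrees with the function $\varphi[\cdot]$ from the statement of Corollary~\ref{corr:intgs}, and by the implicit recursive characterisation of the free cumulant function in Definition~\ref{m kappa def} (which depends on the input function only through its values), the free cumulant functions coincide, $\scf_\circ[B] = \varphi_\circ[B]$ for every $B \subset [k]$.

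Inserting these identifications into~\eqref{eq:niceintform} yields exactly~\eqref{eq:niceintform corr}. There is essentially no obstacle here beyond the small bookkeeping just described; the only mildly delicate point is handling the cutoff $\chi$ correctly so that the rigidity event carries the very high probability statement without polluting the error bound, but this is routine since any such cutoff contributes only $N$-independent constants to the Sobolev norm.
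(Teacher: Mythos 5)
Your proposal is correct and follows essentially the same route as the paper's own proof: specialise Theorem~\ref{theo:intgs} to $f_j(x)=e^{\ii s_j x}$, bound the Sobolev norms by $\lesssim 1+\abs{s_j}^k$, and identify $\scf_\circ$ with $\varphi_\circ$ via $\braket{e^{\ii s\cdot}}_\mathrm{sc}=\varphi(s)$. The only difference is that you spell out the cutoff argument and the Leibniz bound explicitly, whereas the paper (having already discussed the cutoff reduction after Theorem~\ref{theo:intgs}) compresses these to one line; the substance is identical.
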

Corollary~\ref{corr:intgs} ensures a time decay if some or all of the involved matrices are traceless. More precisely, we obtain: 
\begin{corollary}
\label{cor:misscor}
  Let \(k\ge 2\), and let \(s_1,\ldots,s_k\in\R\). Then 
  \begin{enumerate}[label=(\roman*)]
    \item for deterministic traceless matrices \(A_1,\ldots, A_k\) we have the averaged estimate
    \begin{equation}\label{eq therm s} 
      \abs*{\braket*{e^{\ii s_1 W} A_1\cdots e^{\ii s_k W} A_k }} \lesssim  \max_{i\ne j}\Bigl(\frac{1}{(1+\abs{s_i})(1+\abs{s_j})}\Bigr)^{3/2}+ N^\xi \frac{\max_i \abs{s_i}^{k} }{N},
    \end{equation}
    \item and for deterministic traceless matrices \(A_1,\ldots, A_{k-1}\) we have the isotropic estimate
    \begin{equation}\label{eq therm s iso} 
      \abs*{\braket*{\vx,e^{\ii s_1 W} A_1\cdots A_{k-1}e^{\ii s_k W} \vy }} \lesssim  \max_{i}\Bigl(\frac{1}{1+\abs{s_i}}\Bigr)^{3/2}+ N^\xi \frac{\max_i \abs{s_i}^k }{N^{1/2}},
    \end{equation}
  \end{enumerate}
  both with very high probability.
\end{corollary}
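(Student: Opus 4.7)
The plan is to derive both estimates directly from the explicit deterministic expansion of Corollary~\ref{corr:intgs}, by observing that the tracelessness hypotheses annihilate most of the sum over $\NCP[k]$, while each surviving term carries enough singleton-block factors to supply the Bessel decay $|\varphi(s)|\lesssim(1+|s|)^{-3/2}$ that follows from~\eqref{eq:largeasj}.

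For the averaged bound~\eqref{eq therm s}, since $\braket{A_1,\ldots,A_k}_{K(\pi)}=\prod_{B'\in K(\pi)}\braket*{\prod_{j\in B'}A_j}$ and each $A_j$ is traceless, the term indexed by $\pi$ vanishes unless every block of $K(\pi)$ has size at least $2$; call such $\pi$ \emph{admissible}. For admissible $\pi$ the size constraint gives $|K(\pi)|\le k/2$, hence by Kreweras duality $|\pi|=k+1-|K(\pi)|\ge k/2+1$. Writing $s(\pi)$ and $t(\pi):=|\pi|-s(\pi)$ for the numbers of singleton and non-singleton blocks of $\pi$, combining $s(\pi)+t(\pi)\ge k/2+1$ with the trivial $s(\pi)+2t(\pi)\le k$ forces $s(\pi)\ge 2$; admissible partitions always carry at least two singletons. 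The trivial $|\varphi[B']|\le 1$ and the finite Möbius expansion~\eqref{f circ expl} then give a $k$-dependent bound $|\varphi_\circ[B]|\lesssim 1$ for every block, while for a singleton $|\varphi_\circ[\{i\}]|=|\varphi(s_i)|\lesssim(1+|s_i|)^{-3/2}$. Multiplying, each admissible $\pi$ contributes at most a constant times $\max_{i\ne j}[(1+|s_i|)(1+|s_j|)]^{-3/2}$; summing over the finitely many admissible partitions and absorbing the remainder from Corollary~\ref{corr:intgs} yields~\eqref{eq therm s}.

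For the isotropic bound~\eqref{eq therm s iso} the argument is analogous but the condition on $\pi$ is weakened: by Definition~\ref{def partial trace} the trace factors in $\pTr_{K(\pi)}(A_1,\ldots,A_{k-1})$ appear only for blocks $B'\in K(\pi)$ \emph{not} containing $k$, so tracelessness of $A_1,\ldots,A_{k-1}$ kills the term as soon as $K(\pi)$ has a singleton other than possibly $\{k\}$. Repeating the pigeonhole with $s(K(\pi))\le 1$ in place of $0$ gives $|K(\pi)|\le(k+1)/2$, hence $|\pi|\ge(k+1)/2$ and $s(\pi)\ge 2|\pi|-k\ge 1$, yielding one guaranteed singleton factor $(1+|s_i|)^{-3/2}$ per surviving term. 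The isotropic error term from Corollary~\ref{corr:intgs} is carried through unchanged. I expect the only non-routine ingredient to be the singleton-counting lemma above, which is an elementary pigeonhole once Kreweras duality is invoked; bounding $\varphi_\circ[B]$ uniformly and extracting the Bessel decay at singletons is a mechanical application of~\eqref{eq:largeasj} together with the finiteness of $\NCP[k]$.
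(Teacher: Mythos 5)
Your proof is correct and follows essentially the same route as the paper: identify which $\pi\in\NCP[k]$ survive the tracelessness constraint via the structure of $K(\pi)$, use the Kreweras size relation $|\pi|+|K(\pi)|=k+1$ together with a pigeonhole argument to guarantee at least two (resp.\ one) singleton blocks in $\pi$, and then extract the decay $|\varphi_\circ[\{i\}]|=|\varphi(s_i)|\lesssim(1+|s_i|)^{-3/2}$ from~\eqref{eq:defphi}--\eqref{eq:largeasj}. The paper's own proof is a terse two-sentence version of the averaged case that asserts the singleton count without spelling out the pigeonhole and leaves the isotropic case implicit; your write-up supplies the counting argument explicitly and covers both cases, but the idea is identical.
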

In particular, for the unitary time evolution 
    \begin{equation}
      A(t):=e^{\ii tW}A e^{-\ii t W}
    \end{equation} 
    and times \(t_1,\ldots,t_k\in\R\) with consecutive differences \(s_i:=t_i-t_{i-1}\), \(t_0:=t_k\) we have the very high probability bound
    \begin{equation}\label{eq therm} 
      \abs{\braket{A_1(t_1)\ldots A_k(t_k)}} \lesssim  \max_{i\ne j}\Bigl(\frac{1}{(1+\abs{s_i})(1+\abs{s_j})}\Bigr)^{3/2} + N^\xi \frac{\max_i \abs{s_i}^{k} }{N},
    \end{equation}
  as a consequence of~\eqref{eq therm s} and similarly for the isotropic case. 
   
  In the first term of~\eqref{eq therm}  we observe a thermalisation decay for $k$ traceless observables $A_i$, each of them
  evolved some time $t_i$, as long as at least two  consecutive time differences grow.
   If only some of the observables are traceless 
  but the times $t_i$ are  \emph{ordered} (equivalently, all but one $s_i$ in Corollary~\ref{cor:misscor}
have the same sign), then we have a
   decay factor for each traceless observable, hence the decay rate is typically much faster. This is the content
  of the following corollary.
  
\begin{corollary}[Thermalisation decay]\label{cor therm}
  Let \(k\ge 2\), and let \(t_1<\cdots<t_k\) be ordered times. Then 
  \begin{enumerate}[label=(\roman*)]
    \item if \(\mathfrak{a}\) of the deterministic matrices \(A_1,\ldots,A_k\) are traceless, then we have the averaged bound
    \begin{equation}\label{eq therm a}
      \abs{\braket{A_1(t_1)\ldots A_k(t_k)}} \lesssim  \max_{i}\Bigl(\frac{1}{1+\abs{s_i}}\Bigr)^{3(1+\lceil \mathfrak{a}/2\rceil)/2} +N^\xi \frac{\max_i \abs{s_i}^{k} }{N},
    \end{equation}
    for \(s_i=t_i-t_{i-1}\), \(t_0=t_k\),
    \item and if \(\mathfrak{a}\) of the deterministic matrices \(A_1,\ldots,A_{k-1}\) are traceless, then we have the isotropic bound 
    \begin{equation}\label{eq therm a iso}
      \abs{\braket{\vx, A_1(t_1)\ldots A_k(t_k)\vy}} \lesssim  \max_{i}\Bigl(\frac{1}{1+\abs{s_i}}\Bigr)^{3 \lceil \mathfrak{a}/2\rceil/2}+N^\xi \frac{\max_i \abs{s_i}^{k} }{N^{1/2}} ,
    \end{equation}
    for \(s_1=t_1,s_k=t_k\) and \(s_i=t_i-t_{i-1}\) otherwise.
  \end{enumerate}
\end{corollary}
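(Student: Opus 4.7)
The plan is to derive Corollary~\ref{cor therm} from Corollary~\ref{corr:intgs} by identifying the surviving non-crossing partitions in the deterministic approximation and bounding each block factor via Bessel decay. I focus on the averaged case~(i) in detail; the isotropic case~(ii) proceeds analogously. By cyclicity of the trace, the Heisenberg product can be rewritten as $\braket{A_1(t_1)\cdots A_k(t_k)} = \braket{e^{\ii s_1 W} A_1 \cdots e^{\ii s_k W} A_k}$ with $s_1 = t_1-t_k$ and $s_i = t_i-t_{i-1}$ for $i \geq 2$, matching the hypothesis of Corollary~\ref{corr:intgs}. This yields the deterministic expansion $\sum_{\pi\in\NCP[k]} \braket{A_1,\ldots,A_k}_{K(\pi)}\prod_{B\in\pi}\varphi_\circ[B]$ plus the stated fluctuation error. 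The key observation is that $\braket{A_1,\ldots,A_k}_{K(\pi)} = \prod_{B'\in K(\pi)}\braket{\prod_{j \in B'}A_j}$ vanishes whenever some singleton $\{j\}\in K(\pi)$ coincides with a traceless $A_j$, so the sum reduces to partitions $\pi$ for which every singleton of $K(\pi)$ is at a non-traceless position.

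Next I would establish the combinatorial bound $|\pi| \geq 1 + \lceil \mathfrak{a}/2\rceil$ on every surviving partition. Letting $n_1$ denote the number of singleton blocks of $K(\pi)$, these can only occupy the $k-\mathfrak{a}$ non-traceless positions, so $n_1 \leq k - \mathfrak{a}$. Since the non-singleton blocks have size at least $2$ and together cover the remaining $k-n_1$ indices, $|K(\pi)| \leq n_1 + (k-n_1)/2 = (k+n_1)/2 \leq k - \mathfrak{a}/2$. Combining with the Kreweras identity $|\pi|+|K(\pi)| = k+1$ from Definition~\ref{kreweras} gives $|\pi| \geq 1 + \mathfrak{a}/2$, hence the integer bound $|\pi|\ge 1+\lceil\mathfrak{a}/2\rceil$.

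For each such $\pi$, I would bound $\prod_{B \in \pi}|\varphi_\circ[B]|$ using $|\varphi(s)| \lesssim (1+|s|)^{-3/2}$ from~\eqref{eq:largeasj}. For a singleton $B=\{j\}$, the estimate $|\varphi_\circ[j]| = |\varphi(s_j)|\lesssim (1+|s_j|)^{-3/2} \leq (1+\min_i|s_i|)^{-3/2}$ is immediate. For $|B| \geq 2$ I would use the M\"obius representation $\varphi_\circ[B] = \sum_{\pi'\in \NCP(B)}\mu(\pi', 1_B)\prod_{B'}\varphi(\sigma_{B'})$ with $\sigma_{B'} = \sum_{j\in B'}s_j$, and exploit the ordered-time assumption: the common sign of $s_2,\ldots,s_k$ prevents the partial sums $\sigma_{B'}$ from being anomalously small through cancellation, yielding the same per-block decay. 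Multiplying over the $|\pi| \geq 1+\lceil\mathfrak{a}/2\rceil$ blocks and summing over the finitely many surviving partitions gives the averaged bound. For the isotropic case~(ii), the deterministic approximation involves $\pTr_{K(\pi)}$ rather than scalar traces, and only singletons at positions in $[k-1]$ carry a vanishing constraint while the boundary block containing the special index is unconstrained; a parallel counting then produces the shifted exponent $3\lceil\mathfrak{a}/2\rceil/2$.

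The principal technical obstacle is the block-level decay $|\varphi_\circ[B]| \lesssim (1+\min_i|s_i|)^{-3/2}$ for $|B|\geq 2$: the M\"obius sum contains terms of several different decay rates, and the slowest-decaying summand $\varphi(\sigma_B)$ could a priori be $\mathcal{O}(1)$ through sign cancellation in $\sigma_B = \sum_{j \in B} s_j$. The ordered-time hypothesis, which forces $s_2, \ldots, s_k$ to share a sign and lets partial sums avoiding the cyclic wrap-around index be bounded below by $\min_i|s_i|$, is precisely what excludes this degeneracy; converting this observation into a universal bound requires a careful case analysis depending on whether the block $B$ contains the cyclic index and on how the sub-partitions of $B$ interact with this index.
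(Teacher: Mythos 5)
Your proposal follows essentially the same route as the paper: apply Corollary~\ref{corr:intgs}, observe that singleton blocks of \(K(\pi)\) cannot sit at traceless positions, and count blocks. Your explicit derivation \(n_1\le k-\mathfrak a\Rightarrow\abs{K(\pi)}\le(k+n_1)/2\le k-\mathfrak a/2\Rightarrow\abs{\pi}\ge 1+\lceil\mathfrak a/2\rceil\) is exactly the content the paper asserts (the paper's printed line \enquote{\(K(\pi)\) has at most \(\lfloor\mathfrak a/2\rfloor\) blocks} is a misprint for \(k-\lceil\mathfrak a/2\rceil\), as one can see from the proof of Lemma~\ref{lemma M bound}).

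The one place you hedge --- the decay \(\abs{\varphi_\circ[B]}\lesssim(1+\min_i\abs{s_i})^{-3/2}\) for \(\emptyset\ne B\subsetneq[k]\) --- is in fact a very short case split, not a serious obstacle. With \(s_1=t_1-t_k\) and \(s_j=t_j-t_{j-1}>0\) for \(j\ge2\) one has \(\sum_{i=1}^k s_i=0\). If \(1\notin B'\) then \(\sigma_{B'}=\sum_{j\in B'}s_j>0\) is a sum of positive terms and \(\abs{\sigma_{B'}}\ge\min_i\abs{s_i}\); if \(1\in B'\) and \(B'\subsetneq[k]\), then using the zero-sum relation \(\sigma_{B'}=-\sum_{j\ge2,\,j\notin B'}s_j\), which is again a nonempty sum of positive terms, so \(\abs{\sigma_{B'}}\ge\min_i\abs{s_i}\). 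Since this applies to every block \(B'\) in the M\"obius expansion of \(\varphi_\circ[B]\) (all are subsets of \(B\subsetneq[k]\)), every summand decays at least like \((1+\min_i\abs{s_i})^{-3/2}\), giving the claim. With this observation filled in, your proof is complete and matches the paper's.
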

The estimate~\eqref{eq therm} in particular implies that time evolutions \(A(t_1),A(t_2)\) become \emph{asymptotically free} as \(\abs{t_1-t_2}\to\infty\). We recall that elements \(a_1,\ldots,a_p\) of some non-commutative probability space \((\mathcal A,\braket{\cdot})\) are called \emph{free} if for any \(k\in\N\), any \(i_1\ne i_2\ne \cdots\ne i_k\in[p]\) and polynomials \(p_1,\ldots,p_k\) satisfying \(\braket{p_j(a_{i_j})}=0\) it holds that 
\begin{equation}
  \braket{p_1(a_{i_1})\cdots p_k(a_{i_k})}=0.
\end{equation}
\begin{corollary}\label{corr free indep}
  Let \(p\in\N\) and let \(A_1,\ldots,A_p\) be sequences of deterministic matrices with \(\norm{A_i}\lesssim1\). Consider $p$
  sequences of times  $\{ t_1^N\}_{N\in\mathbf{N}}$,  $\{ t_2^N\}_{N\in\mathbf{N}}\ldots,$  $\{ t_p^N\}_{N\in\mathbf{N}}$
   such that $\min_{i\ne j}|t_i^N-t_j^N|\to \infty$ as $N\to \infty$, then the unitary time evolutions \(A_1(t_1),\ldots,A_p(t_p)\) are asymptotically free. More precisely, for any \(k\in\N\) and \(i_1\ne \cdots \ne i_k\in[p]\), and any polynomials \(p_1,\ldots,p_k\) with \(\braket{p_j(A_{i_j})}=0\) (which also implies $\braket{p_j(A_{i_j}(t_{i_j}))}=0)$ we have  (with $t_i=t_i^N$ for brevity)
  \begin{equation}\label{eq corr free}
\lim_{N\to\infty \atop \min_{i\ne j}\abs{t_i-t_j}\to\infty} \braket{p_1(A_{i_1}(t_{i_1}))\cdots p_k(A_{i_k}(t_{i_k}))}
  \end{equation}
  with very high probability, as long as \(\max_i\abs{t_i}\lesssim N^{1/k-\epsilon}\) for some \(\epsilon>0\).
\end{corollary}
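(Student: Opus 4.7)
The plan is to reduce the statement directly to the thermalisation bound of Corollary~\ref{cor:misscor}(i). First I would use that unitary conjugation commutes with polynomial evaluation, so that $p_j(A_{i_j}(t_{i_j})) = e^{it_{i_j} W} B_j e^{-it_{i_j} W}$ with $B_j := p_j(A_{i_j})$ a bounded deterministic matrix. Cyclicity of the trace shows $\braket{B_j} = \braket{p_j(A_{i_j}(t_{i_j}))}$, so the hypothesis $\braket{p_j(A_{i_j})}=0$ simultaneously makes each $B_j$ traceless and justifies the parenthetical remark in the statement.

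Next I would telescope the product through the normalised trace, obtaining
\[
\braket{p_1(A_{i_1}(t_{i_1})) \cdots p_k(A_{i_k}(t_{i_k}))} = \braket{e^{is_1 W} B_1\, e^{is_2 W} B_2 \cdots e^{is_k W} B_k},
\]
with cyclic time-differences $s_1 := t_{i_1} - t_{i_k}$ and $s_j := t_{i_j} - t_{i_{j-1}}$ for $2 \le j \le k$. The alternation hypothesis $i_{j-1} \ne i_j$ is the crucial input here: it forces $|s_j| \ge \min_{a \ne b}|t_a - t_b|$ for every $j \ge 2$, so at least $k-1$ of the cyclic differences diverge under the stated limit.

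Since all $B_j$ are traceless and bounded, Corollary~\ref{cor:misscor}(i) applies and yields, with very high probability,
\[
\abs*{\braket{p_1(A_{i_1}(t_{i_1})) \cdots p_k(A_{i_k}(t_{i_k}))}} \lesssim \max_{i\ne j}\Bigl(\frac{1}{(1+|s_i|)(1+|s_j|)}\Bigr)^{3/2} + N^\xi \frac{\max_i |s_i|^k}{N}.
\]
The first term vanishes because for every pair of indices $i \ne j$ at least one of $|s_i|, |s_j|$ diverges (only $s_1$ can possibly stay bounded, which happens precisely when $i_1 = i_k$). For the error, the hypothesis $\max_i |t_i| \lesssim N^{1/k - \epsilon}$ gives $\max_i |s_i|^k / N \lesssim N^{-k\epsilon}$, so choosing $0 < \xi < k\epsilon$ in the arbitrary-$\xi$ clause produces a vanishing bound $\lesssim N^{\xi - k\epsilon}\to 0$.

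I do not foresee any real obstacle: the deep analytic content, namely the thermalisation estimate, has already been established in Corollary~\ref{cor:misscor}. The only genuinely new step in this corollary is the cyclic rearrangement that translates the alternating-product-of-traceless-matrices condition into the input required by the thermalisation bound --- that at most one of the cyclic time-differences $s_j$ can fail to diverge.
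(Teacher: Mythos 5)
Your proof is correct and follows essentially the same route as the paper's: rewrite the product in telescoped form $\braket{e^{\ii s_1 W}B_1\cdots e^{\ii s_k W}B_k}$ with $B_j=p_j(A_{i_j})$ traceless, then invoke the thermalisation bound of Corollary~\ref{cor:misscor}(i) and check that both the decay term and the $N^\xi\max_i|s_i|^k/N$ error vanish in the stated regime. The only cosmetic difference is that the paper first reduces by linearity to centered monomials $(A^{a_j})^\circ$ before applying \eqref{eq therm}, whereas you apply the estimate directly to the general traceless polynomials $p_j(A_{i_j})$ — this is permissible since Corollary~\ref{cor:misscor}(i) is stated for arbitrary bounded traceless matrices, and slightly streamlines the argument.
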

Corollary~\ref{corr free indep} can be easily extended to the asymptotic freeness of \(A_1(t_1),\ldots,A_p(t_p)\) and the algebra generated by arbitrary deterministic matrices \(\set{D_1,\ldots,D_q}\).



 
We stress that conjugation by the unitary time evolutions with respect to the very same Wigner matrix yields asymptotically free observables in~\eqref{eq corr free}. In the language of free-probability theory, a statement analogous to Corollary~\ref{corr free indep} is\footnote{We thank Roland Speicher for bringing this to our attention in private communication, and pointing out the reference~\cite[Proposici\'on~3.21]{jacobo} for the simple proof of the assertion.} that for self-adjoint \(a_1,\ldots,a_p\) and unitary elements \(u_1,\ldots,u_p\) in some non-commutative probability space \((\mathcal A,\phi)\) the conjugated elements \(u_1 a_1 u_1^\ast,\ldots,u_pa_pu_p^\ast\) are free whenever \(\set{a_1,\ldots,a_p}\) is \(\ast\)-free from \(\set{u_1,\ldots,u_p}\) and \(\phi(u_iu_j^\ast)=0\) for \(i\ne j\). Note that here freeness among the unitaries is not required, exactly as in Corollary~\ref{corr free indep} where the unitaries \(e^{\ii t_i W},e^{\ii t_j W}\) are not independent. In this context we also mention that for a set \(S\) and a Haar unitary \(u\) in a free probability space the conjugations of \(S\) with respect to different powers of the same unitary \(u\), i.e.\ \(S,uSu^\ast,u^2 S(u^\ast)^2\), are free, see~\cite[Lemma 3.7]{MR1784419}. The estimate~\eqref{eq corr free} shows that for large \(t\) the unitary random matrix \(e^{\ii tW}\) is close to a Haar unitary in this sense.

\subsection{Extensions of Theorem~\ref{theo:intgs}}
We close this section with a few extensions of Theorem~\ref{theo:intgs}. The first one is straight-forward and we illustrate its proof immediately by an example, whilst the other two will be proven in Section~\ref{sec:corex}.
\begin{extension}[Multiple independent Wigner matrices]\label{rmk:manyW}
  Due to the high-probability sense of Theorem~\ref{theo:intgs} we immediately obtain generalisations to multiple independent Wigner matrices both in averaged and isotropic sense by applying Theorem~\ref{theo:intgs}, to resolve
   each Wigner matrix iteratively while conditioning on all others. For example, let \(W,W'\) denote two independent Wigner matrices satisfying Assumption~\ref{ass:entr}. Then, as an example, we obtain 
  \begin{align*}
      \braket{f_1(W)A_1 f_2(W')A_2 f_3(W)A_3} &= \braket{f_2}_\mathrm{sc}\braket{ f_1(W)A_1A_2 f_3(W)A_3 } + \landauO*{\frac{N^\xi}{N}\norm{f_2}_{H^2}} \\
      &= \braket{A_1 A_2 A_3}\braket{f_1}_\mathrm{sc}\braket{f_2}_\mathrm{sc}\braket{f_{3}}_\mathrm{sc} +  \landauO*{\frac{N^\xi}{N}\max_i\norm{f_i}_{H^2}}\\
      &\quad + \braket{A_1A_2}\braket{A_3}\braket{f_2}_\mathrm{sc}\bigl(\braket{f_1f_3}_\mathrm{sc}-\braket{f_1}_\mathrm{sc}\braket{f_3}_\mathrm{sc}\bigr)
  \end{align*}
with very high probability, where in the first step we used Theorem~\ref{theo:intgs}
 for the random matrix \(W'\) after conditioning on \(W\),
while in the second step we used Theorem~\ref{theo:intgs} again for \(W\).  This result
should be compared with corresponding expression for \(W=W'\) in Example~\ref{example theo} which implies 
\[\begin{split}
  &\braket{f_1(W)A_1 f_2(W)A_2 f_3(W)A_3} \\
  &= \braket{A_1 A_2 A_3}\braket{f_1}_\mathrm{sc}\braket{f_2}_\mathrm{sc}\braket{f_2}_\mathrm{sc} + \braket{A_1 A_2} \braket{A_3}\braket{f_2}_\mathrm{sc}\bigl(\braket{f_1f_3}_\mathrm{sc}-\braket{f_1}_\mathrm{sc}\braket{f_3}_\mathrm{sc}\bigr) \\
  &\quad  + \braket{A_1 A_3} \braket{A_2}\braket{f_1}_\mathrm{sc}\bigl(\braket{f_2f_3}_\mathrm{sc}-\braket{f_2}_\mathrm{sc}\braket{f_3}_\mathrm{sc}\bigr)+ \braket{A_2 A_3} \braket{A_1}\braket{f_3}_\mathrm{sc}\bigl(\braket{f_1f_2}_\mathrm{sc}-\braket{f_1}_\mathrm{sc}\braket{f_2}_\mathrm{sc}\bigr)\\
  &\quad + \braket{A_1}\braket{A_2}\braket{A_3} \bigl(\braket{f_1f_2f_3}_\mathrm{sc}-\braket{f_1f_2}_\mathrm{sc}\braket{f_3}_\mathrm{sc}-\braket{f_1f_3}_\mathrm{sc}\braket{f_2}_\mathrm{sc}\\
  &\qquad\qquad\qquad\qquad\qquad\qquad-\braket{f_2f_3}_\mathrm{sc}\braket{f_1}_\mathrm{sc}+2\braket{f_1}_\mathrm{sc}\braket{f_2}_\mathrm{sc}\braket{f_3}_\mathrm{sc}\bigr)+\landauO*{N^{\xi-1}\max_i\norm{f_i}_{H^2}}.
\end{split}\]
Similar statements  hold  for arbitrary number of independent Wigner matrices \(W_1,W_2,W_3,\dots\) with possible repetitions.
\end{extension}

\begin{extension}[Mesoscopic version]\label{rem:mesver}
  For \(0<a<1\) and mesoscopically rescaled \(f_i(x)=g_i(N^{a}(x-E))\) with \(|E|\le 3\) and \(g_i\in H_0^k\) compactly supported, the result in Theorem~\ref{theo:intgs} holds with the bound \(N^{\xi-1}\max_i\norm{g_i}_{H^k}\) replacing the rhs.\ in~\eqref{eq:niceintform}.
\end{extension} 

\begin{extension}[Regularity of test functions]\label{rem:lessreg}
Theorem~\ref{theo:intgs} gives an error bound practically of order \(N^{-1}\) under a relatively high regularity assumption on the functions \(f_i\). Similar result holds for less regular functions with a weaker error bound. For example, if all \(f_i\in H^p([-3,3])\), with some \(p\in [2,k]\), then we obtain~the averaged bound in~\eqref{eq:niceintform} with an error term \(N^{\xi-(p-1)/(k-1)}\prod_i\norm{f_i}_{H^p}\) in the rhs., and the isotropic bound in~\eqref{eq:niceintform} with an error term \(N^\xi[N^{-(p-1)/(2k-3)}\vee N^{-1/2}] \prod_i\norm{f_i}_{H^p}\).
\end{extension} 

\subsection{M\"obius for non-crossing partitions}
Finally, in terms of the Kreweras complement, we provide an explicit expression for the M\"obius function \(\mu(\pi,1_S)\) defined recursively in~\eqref{f circ expl}. Lemma~\ref{mobius expl} is a standard result in the free probability literature but we present the proof for convenience.
\begin{lemma}\label{mobius expl}
  For any finite \(S\subset\N\) and any \(\cP\in\NCP(S)\) we have 
  \[\mu(\cP,1_S)=(-1)^{\abs{\cP}-1} \prod_{B\in K(\cP)} C_{\abs{B}-1},\]
  where \(C_n\) denotes the \(n\)-th Catalan number, i.e.\ \((C_0,C_1,C_2,C_3,\ldots)=(1,1,2,5,14,\ldots )\). 
\end{lemma}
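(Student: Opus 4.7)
The plan is to reduce the formula to two classical structural facts about the non-crossing partition lattice: the multiplicative factorisation of upper intervals via the Kreweras complement, and the Möbius value on $\NCP([n])$ between its extreme elements.

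The essential step would be to establish the following factorisation of upper intervals in $\NCP(S)$: for any $\cP\in\NCP(S)$, there is a lattice isomorphism
\[[\cP,1_S]\;\cong\;\prod_{B\in K(\cP)}\NCP([\abs{B}]).\]
Geometrically, in the disk picture used in Definition~\ref{kreweras}, each connected component of $\DD\setminus\bigcup_{B'\in\cP}P_{B'}$ is bounded by a prescribed cyclic sequence of $\abs{B}$ blocks of $\cP$, where $B\in K(\cP)$ is the block of arcs labelling that component. A coarsening $\cQ\ge\cP$ in $\NCP(S)$ may only merge blocks of $\cP$ that share a common such component (otherwise crossings would appear), and within a single component the merging pattern is an arbitrary non-crossing partition of the $\abs{B}$ blocks bordering it. By the multiplicativity of the Möbius function on direct products of posets, this identifies
\[\mu(\cP,1_S)\;=\;\prod_{B\in K(\cP)}\mu_{\NCP([\abs{B}])}(0_{\abs{B}},1_{\abs{B}}).\]

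Next I would compute $m_n:=\mu_{\NCP([n])}(0_n,1_n)$ and show $m_n=(-1)^{n-1}C_{n-1}$. The defining identity $\sum_{\cP\in\NCP([n])}\mu(\cP,1_n)=\delta_{n,1}$, combined with the just-established factorisation and the fact that $K$ is a bijection on $\NCP([n])$, rewrites as the moment--cumulant relation $\sum_{\cP\in\NCP([n])}\prod_{B\in\cP}m_{\abs{B}}=\delta_{n,1}$ for a ``formal variable'' with moments $M_n=\delta_{n,1}$. Introducing the generating function $\kappa(w):=\sum_{n\ge 1}m_n w^n$, the free moment--cumulant formula $M(z)-1=\kappa(zM(z))$ with $M(z)=1+z$ reduces to $\kappa(w)+\kappa(w)^2=w$, whose unique formal power series solution is $\kappa(w)=(\sqrt{1+4w}-1)/2$. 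The Taylor coefficients of this function equal $(-1)^{n-1}C_{n-1}$ by Newton's binomial series together with the elementary identities $\binom{1/2}{n}4^n=(-1)^{n-1}\binom{2n}{n}/(2n-1)$ and $C_{n-1}=\binom{2n-2}{n-1}/n$. (An inductive matching against the Catalan recurrence is an equivalent route.)

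Finally I would assemble: since $\abs{\cP}+\abs{K(\cP)}=\abs{S}+1$, one has $\sum_{B\in K(\cP)}(\abs{B}-1)=\abs{S}-\abs{K(\cP)}=\abs{\cP}-1$, whence
\[\prod_{B\in K(\cP)}(-1)^{\abs{B}-1}C_{\abs{B}-1}\;=\;(-1)^{\abs{\cP}-1}\prod_{B\in K(\cP)}C_{\abs{B}-1},\]
which is the claimed identity. The main obstacle is the interval-factorisation claim: this geometric combinatorial step carries the substantive content of the argument, whereas once it is in hand the remainder reduces to a standard generating-function computation and elementary bookkeeping of the signs through the Kreweras complement.
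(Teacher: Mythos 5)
Your proposal is correct and is structurally parallel to the paper's proof: both rest on the factorisation of the upper interval $[\cP,1_S]$ into a product indexed by the blocks of $K(\cP)$, multiplicativity of the M\"obius function, and the sign count via $\abs{\cP}+\abs{K(\cP)}=\abs{S}+1$. The difference is in how the two ingredients are obtained. For the interval factorisation the paper takes a shortcut: it uses that $K$ is an order anti-automorphism, so $[\cP,1_S]\cong[0_S,K(\cP)]$, and the \emph{lower}-interval factorisation $[0_S,\sigma]\cong\prod_{B\in\sigma}[0_B,1_B]$ is trivial. You instead argue directly for the upper-interval factorisation via the disk picture; this is the right idea, but proving that the per-component merging choices are genuinely independent (so that the product decomposition holds as a poset isomorphism) is exactly the kind of bookkeeping the anti-automorphism trick eliminates, and a fully rigorous version of your geometric step would have to spell out that two blocks of $\cP$ border at most one common component and that all locally non-crossing merges assemble to a globally non-crossing coarsening. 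For the evaluation $\mu(0_n,1_n)=(-1)^{n-1}C_{n-1}$ the paper simply cites Speicher, whereas you derive it from scratch by combining the defining identity $\sum_{\cP\in\NCP([n])}\mu(\cP,1_n)=\delta_{n,1}$, the bijectivity of $K$, and the free moment--cumulant generating-function relation to arrive at $\kappa(w)+\kappa(w)^2=w$; your binomial-coefficient computation checks out. Net effect: your route is self-contained where the paper cites, at the cost of a geometric step that would need more care to be airtight than the paper's anti-automorphism argument.
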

\begin{proof}
  As noticed in~\cite[Proposition 1]{MR1268597} the interval \([\pi,1_S]\) (with respect to the refinement partial order) is isomorphic to products of elementary intervals of the form \([0_B,1_B]\) and therefore the general formula for the M\"obius function follows directly from the special case \(\mu(0_B,1_B)=(-1)^{\abs{B}-1}C_{\abs{B}-1}\), as computed in~\cite[Corollary 5]{MR1268597}. Following e.g.~\cite[Lemma~2.14]{muhle}, this idea can conveniently be presented by using the Kreweras complement, noting that \(K\) is an anti-automorphism in the sense that \(\pi\le \sigma\) if and only if \(K(\pi)\ge K(\sigma)\), and thus we have the isomorphism
  \begin{equation*}
    [\pi,1_S] \cong [K(1_S),K(\pi)] = [0_S,K(\pi)] \cong \prod_{B\in K(\pi)} [0_B,1_B].
  \end{equation*}
  Consequently, since the M\"obius function as defined in~\eqref{f circ expl} is multiplicative, it follows that 
  \[ 
  \begin{split} 
    \mu(\pi,1_S)&=\prod_{B\in K(\pi)}\mu(0_B,1_B)=\prod_{B\in K(\pi)}(-1)^{\abs{B}-1}C_{\abs{B}-1}=(-1)^{\abs{S}-\abs{K(\pi)}}\prod_{B\in K(\pi)}C_{\abs{B}-1} 
  \end{split}
  \]
  and the claim follows from \(\abs{\pi}+\abs{K(\pi)}=\abs{S}+1\). 
\end{proof}

\section{Multi resolvent local laws}\label{sec resolvent}

Before stating the local laws for \(G_1A_1G_2\cdots A_{k-1}G_k\), we introduce the commonly used definition of stochastic domination (see, e.g.~\cite{MR3068390}):

\begin{definition}[Stochastic Domination]\label{def:stochDom}
  If \[
  X=\tuple*{ X^{(N)}(u) \given N\in\N, u\in U^{(N)} }\quad\text{and}\quad Y=\tuple*{ Y^{(N)}(u) \given N\in\N, u\in U^{(N)} }\] are families of non-negative random variables indexed by \(N\), and possibly some parameter \(u\), then we say that \(X\) is stochastically dominated by \(Y\), if for all \(\epsilon, D>0\) we have \[\sup_{u\in U^{(N)}} \Prob\left[X^{(N)}(u)>N^\epsilon  Y^{(N)}(u)\right]\leq N^{-D}\] for large enough \(N\geq N_0(\epsilon,D)\). In this case we use the notation \(X\prec Y\) or \(X= \landauOprec*{Y}\).
\end{definition}

For \(k=1\) it is well known that as \(N\to +\infty\) the resolvent \(G\) is well approximated by the unique solution \(m=m(z)\) of the Dyson equation
\begin{equation}
\label{eq MDE}
-\frac{1}{m}=m+z, \quad \Im m \Im z>0.
\end{equation}
The optimal local law for a single \(G\) is well known~\cite{MR2871147,MR3103909,MR3183577} (see e.g.~\cite[Appendix A]{MR4221653} to extend the local law to \(\eta\ge N^{-100}\)):
\begin{theorem}[Single \(G\) local laws]\label{single G local law}
  Let \(z\in\C\setminus\R\) with \(\eta:=\abs{\Im z}\ge N^{-100}\). Then for deterministic matrices and vectors \(A,\vx,\vy\) with bounded norms \(\norm{A}+\norm{\vx}+\norm{\vy}\lesssim 1\) we have 
  \[\braket{GA}= m\braket{A} + \landauOprec*{\frac{1}{N\eta}}, \quad \braket{\vx,G\vy}=m\braket{\vx,\vy}+\landauOprec*{\sqrt{\frac{\rho}{N\eta}}}\]
  with \(\rho:=\pi^{-1}\abs{\Im m}\). 
\end{theorem}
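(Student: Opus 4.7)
The plan is to follow the standard self-consistent equation approach for Wigner matrices, as developed in the references cited just before the theorem, and then extend validity from $\eta\ge N^{-1+\epsilon}$ down to $\eta\ge N^{-100}$ by a monotonicity/continuity argument.

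First I would derive an approximate self-consistent equation for $\braket{G}$. Starting from $WG=I+zG$, one multiplies by $G$ and takes the trace, then integrates by parts (Gaussian case) or applies the cumulant expansion (general case, using the high moment bounds in Assumption~\ref{ass:entr}) in the entries of $W$. The result is a stochastic perturbation of the Dyson equation,
\begin{equation*}
1 + z\braket{G} + \braket{G}^2 = \braket{\underline{WG}} + \mathcal{E},
\end{equation*}
where $\underline{WG}$ is the second-order renormalized product and $\mathcal{E}$ collects higher-cumulant remainders. A second-moment estimate for $\braket{\underline{WG}}$, using the Ward identity $GG^\ast = \Im G/\eta$ to replace sums of $\abs{G_{ij}}^2$ by $\Im\braket{G}/(N\eta)$, gives $\braket{\underline{WG}}=\landauOprec{1/(N\eta)}$; the remainder $\mathcal{E}$ is controlled similarly. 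Quadratic stability of the scalar Dyson equation $-1/m=m+z$ (uniform in the bulk, with a different normalization near the square-root edges but always yielding optimal $1/(N\eta)$ precision) then inverts this perturbed equation to give $\braket{G}=m+\landauOprec{1/(N\eta)}$. Extending to $\braket{GA}$ is immediate by adding $A$ on the right and running the same computation.

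For the isotropic law I would repeat the argument with test vectors, starting from the identity $G_{\vx\vy}=-m(z+m)\braket{\vx,G\vy}-m\braket{\vx,WG\vy}+m\braket{\vx,\vy}$ and expanding $\braket{\vx,WG\vy}$ by cumulant expansion. The key improvement from $1/(N\eta)$ to $\sqrt{\rho/(N\eta)}$ arises in the second-moment bound: the dominant term is of the form $N^{-1}\sum_a\abs{(G\vy)_a}^2 = N^{-1}\braket{\vy,G^\ast G\vy} = \Im\braket{\vy,G\vy}/(N\eta)\lesssim \rho\norm{\vy}^2/(N\eta)$, whose square root produces the advertised bound. Stability is applied on the isotropic level.

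The main obstacle is extending the local law down to $\eta\ge N^{-100}$: the standard proof gives the bounds only for $\eta\ge N^{-1+\epsilon}$, since below the microscopic eigenvalue spacing both sides of the self-consistent equation fluctuate too wildly. I would handle this exactly as in~\cite{MR4221653}*{Appendix A}: invoke eigenvalue rigidity from~\cite{MR3068390} to localize $\Spec W\subset[-2-\epsilon,2+\epsilon]$ and place the individual eigenvalues near their classical locations, use the monotonicity of $\eta\mapsto \eta^{-1}\Im\braket{\vx,G(E+\ii\eta)\vx}$ and the resolvent identity $G(\eta)-G(\eta_0)=\ii(\eta_0-\eta)G(\eta)G(\eta_0)$ to propagate the bound from $\eta_0\sim N^{-1+\epsilon}$ downward, and interpolate with the trivial $\norm{G}\le 1/\eta$ bound. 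The error terms $1/(N\eta)$ and $\sqrt{\rho/(N\eta)}$ grow as $\eta$ decreases and absorb the losses from this interpolation, so the statement remains meaningful and optimal all the way down to $\eta=N^{-100}$.
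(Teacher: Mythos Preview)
The paper does not actually prove this theorem: it is stated as a well-known result and simply referenced to~\cite{MR2871147,MR3103909,MR3183577}, with the extension to \(\eta\ge N^{-100}\) attributed to~\cite[Appendix~A]{MR4221653}. Your proposal is a faithful sketch of precisely that standard route (self-consistent equation via cumulant expansion, Ward identity, quadratic stability of the scalar Dyson equation, and the monotonicity/rigidity argument for small \(\eta\)), so there is nothing substantive to compare---you have reconstructed the intended argument from the cited literature.

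One minor slip: your displayed isotropic identity \(G_{\vx\vy}=-m(z+m)\braket{\vx,G\vy}-m\braket{\vx,WG\vy}+m\braket{\vx,\vy}\) is not quite right as written, since \(-m(z+m)=1\) by the Dyson equation and the formula then collapses. The correct starting point is the relation \(G=m-m\,\underline{WG}+m\braket{G-m}G\) (which the paper records later as~\eqref{G underline}), obtained by writing \(G=-\tfrac{1}{z+m}(I-\underline{WG}+\braket{G-m}G)\); applied to \(\vx,\vy\) this gives \(\braket{\vx,G\vy}=m\braket{\vx,\vy}-m\braket{\vx,\underline{WG}\vy}+m\braket{G-m}\braket{\vx,G\vy}\), from which the isotropic law follows by the second-moment argument you describe.
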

For \(k\ge 2\) the resolvent identity \(G(z_1)G(z_2)=[G(z_1)-G(z_2)]/[z_1-z_2]\) suggests that \emph{divided differences} of \(m\) provide the deterministic approximation to \(G_1G_2\cdots G_k\), i.e.\ in the case when the deterministic matrices are \(A_1=A_2=\cdots=A_{k-1}=I\). 
\begin{definition}[Divided differences]\label{div diff def}
  For finite multi-sets \(\set{z_1,\ldots,z_n}\subset\C\setminus \mathbf{R}\) we recursively define 
  \begin{equation}\label{div diff dist}
    m[z_1,\ldots,z_n]:= \frac{m[z_2,\ldots,z_n]-m[z_1,\ldots,z_{n-1}]}{z_{n}-z_{1}}
  \end{equation}
  in case there are two distinct \(z_1\ne z_n\) among \(z_1,\ldots,z_n\), and otherwise we set
  \begin{equation}
    m[\underbrace{z,\ldots,z}_{n\text{ times}}] := \frac{m^{(n-1)}(z)}{(n-1)!}.
  \end{equation}
  We note that this is well defined in the sense that \(m[z_1,\ldots,z_n]\) is independent of the ordering of the multi-set \(\set{z_1,\ldots,z_n}\).
\end{definition}  

The main technical result to prove Theorem~\ref{theo:intgs} is the local law for alternating products of resolvents and deterministic matrices, which will be proven later in Section~\ref{sec:proofllaw}.
\begin{theorem}[Multi-resolvent local law]\label{local law}
  Let \(W\) be a Wigner matrix satisfying Assumption~\ref{ass:entr} with resolvent \(G(z)=(W-z)^{-1}\). For \(k\in\N\) let \(z_1,\ldots,z_k\in\C\setminus\R\) be such that $|\Re z_i|\le 3$ and $|\Im z_i|\ge N^{-1}$, and let \(A_1,\ldots,A_k,\vx,\vy\) be arbitrary deterministic matrices and vectors with \(\norm{\vx},\norm{\vy},\norm{A_i}\lesssim1\), and define
  \begin{equation}\label{eq M1..k def}
    M_{[k]} := \sum_{\cP\in \NCP([k])} \pTr_{K(\cP)} (A_1,\ldots, A_{k-1}) \prod_{B\in \cP} m_\circ[B],
  \end{equation}
  where \(m_\circ\) is the free cumulant function from Definition~\ref{m kappa def} of the divided difference \(m[i_1,\ldots,i_n]:=m[z_{i_1},\ldots,z_{i_n}]\) from Definition~\ref{div diff def}. Then with 
  \begin{equation}\label{eq eta rho def}
    \eta_*:=\min_i\abs{\Im z_i},\qquad \rho:=\max_i\rho(z_i), \qquad \rho(z):=\frac{\abs{\Im m(z)}}{\pi} 
  \end{equation} we have 
  \begin{subequations}\label{eq local laws}
    \begin{align}\label{iso local law}
      \braket{\vx,G_1 A_1 G_2 \ldots A_{k-1}G_k\vy} &= \braket{\vx,M_{[k]}\vy} + \landauOprec*{ \frac{1}{\eta_*^{k-1}}\sqrt{\frac{\rho}{N\eta_*}} }, \\\label{av local law}
      \braket{G_1 A_1 \ldots A_{k-1} G_k A_k} &= \braket{M_{[k]}A_k} + \landauOprec*{\frac{1}{N\eta_*^k}}.
    \end{align}
  \end{subequations} 
\end{theorem}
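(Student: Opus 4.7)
My plan is to prove Theorem~\ref{local law} by induction on $k$, the base case $k=1$ being the classical single-resolvent local law of Theorem~\ref{single G local law}. The inductive step is driven by a self-consistent (Schwinger--Dyson) equation for the random quantity $G_1 A_1 G_2 \cdots A_{k-1} G_k$. Starting from $WG_1 = -I - z_1 G_1$ and applying a cumulant expansion in the entries of $W$, the second-order (Gaussian) cumulant produces terms of two types: a diagonal contribution that gives the $m_1$-prefactor, and ``linking'' contributions where the first resolvent $G_1$ is contracted against some later $G_j$, closing a block containing the index $1$. Schematically, this yields an equation of the form
\[
G_1 A_1 \cdots A_{k-1} G_k = m_1 A_1 G_2 \cdots A_{k-1} G_k + m_1\!\!\sum_{2\le j\le k}\!\!\braket{G_1 A_1\cdots G_j}\cdot (\text{suffix}) + \mathcal{E},
\]
with $\mathcal{E}$ collecting the higher-cumulant error together with the centered quadratic variation terms. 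Iterating this recursion recovers precisely the sum over $\NCP[k]$ after recognising that the block containing $1$ can be arbitrary and that the complementary resolvent factors split according to the Kreweras complement.

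The second step is to verify that the proposed deterministic $M_{[k]}$ in~\eqref{eq M1..k def} satisfies the \emph{same} recursion. Splitting the sum over $\pi\in\NCP[k]$ according to the block $B\ni 1$ and invoking the defining identity~\eqref{eq m kappa def} of the free cumulant function $m_\circ$, one factors out $m_\circ[B]$ times a sum of non-crossing partitions of $[k]\setminus B$, yielding a product of lower-order $M$'s. The geometric description of the Kreweras complement shows that the partial-trace factors $\pTr_{K(\pi)}$ match precisely with the placement of the deterministic matrices $A_i$ between the recursively-produced subblocks. A convenient intermediate representation is the non-crossing-graph expansion promised in Lemma~\ref{lemma q}; the equivalence between that representation and~\eqref{eq M1..k def} exhibits the crucial cancellations between divided differences of $m$ that are hidden in the partition form.

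Combining the two sides and applying the inductive hypothesis to each sub-product on the right gives~\eqref{iso local law}--\eqref{av local law} modulo a stability/bootstrap step. The main obstacle is exactly this stability analysis in the mesoscopic regime $\eta_* \gtrsim N^{-1}$: a naive bookkeeping costs a factor $\eta_*^{-1}$ per iteration and yields suboptimal powers of $\eta_*^{-1}$. To achieve the sharp $N^{-1}\eta_*^{-k}$ in~\eqref{av local law} and $\eta_*^{-(k-1)}\sqrt{\rho/(N\eta_*)}$ in~\eqref{iso local law}, one must exploit (i) the sharp fluctuation estimates from~\cite{2012.13215} to bound the cumulant-expansion remainder $\mathcal{E}$, and (ii) the cancellations of divided differences that appear upon resumming the non-crossing graph expansion into~\eqref{eq M1..k def}. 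Technically this is implemented by a bootstrap on $\eta_*$: one establishes weaker a priori bounds at $\eta_* \sim 1$ (which follow from the norm bound on $G$), and then propagates them down to $\eta_* \sim N^{-1}$ via a standard continuity argument on a fine grid of spectral parameters. The isotropic bound is obtained by a further iteration that uses the averaged bound as input into a second cumulant expansion paired with the deterministic vectors $\vx$, $\vy$.
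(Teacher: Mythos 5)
Your overall strategy---induction on $k$ via a Schwinger--Dyson equation, matching the recursion for $M_{[k]}$ via the non-crossing-graph representation of Lemma~\ref{lemma q}---agrees with the skeleton of the paper's argument. However, your stability step is where the proposal is genuinely incomplete, and it misses two structural ideas that drive the proof.

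First, the paper does not use a continuity/bootstrap argument on a grid of spectral parameters. The self-consistent equation (see~\eqref{eq GGGk resolution}) isolates on the left a stability coefficient $1-m_1m_k-m_1\braket{G_1-m_1}-m_1\braket{G_k-m_k}$, and the optimal rate comes from a \emph{case split on the sign of $\Im z_1\Im z_k$}. When $\Im z_1\Im z_k<0$ the factor $1-m_1m_k$ can degenerate (in the bulk, $m_k\approx\overline{m_1}$ so $1-m_1m_k\approx 1-|m_1|^2\to 0$), and the paper sidesteps the unstable equation entirely by the resolvent identity $G_1-G_k=(z_1-z_k)G_1G_k$ combined with $|z_1-z_k|\ge\eta_1\vee\eta_k$, reducing directly to the $(k-1)$-resolvent local law via~\eqref{eq M1k tr recursion}. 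When $\Im z_1\Im z_k>0$, one has the quantitative lower bound $|1-m_1m_k|\gtrsim\rho$, and since the fluctuation bound~\eqref{av underline} on $\braket{\un{WG_{[k]}}}$ already carries a numerator $\rho$, dividing by the stability factor yields the clean $1/(N\eta_*^k)$ in~\eqref{av local law}. Your schematic SD equation hides the $m_1 m_k\braket{G_{[k]}}$ term inside ``$(\text{suffix})$'', which is exactly the term that must be moved to the left-hand side to produce the stability coefficient, and your plan never addresses where the lower bound on that coefficient comes from or what happens when it fails. A bootstrap on $\eta_*$ alone does not supply this; without the case split and the $\rho$-aware stability bound you would at best recover suboptimal powers.

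Second, step~\ref{av local law stepI} also needs the cyclic rearrangement so that $\rho_k=\max_j\rho_j$, and step~\ref{av local law stepZ} the decomposition $A_k=\braket{A_k}I+(A_k-\braket{A_k})$. These are not cosmetic: the traceless part is treated separately precisely because the stability coefficient appears multiplied by $1+\landauOprec{1/(N\eta_*)}$ rather than $1-m_1m_k$ in that case, giving a different mechanism for the sharp bound. Finally, the paper does not re-derive the cumulant expansion: it imports the renormalized-chain fluctuation estimates~\eqref{av underline}--\eqref{iso underline} from~\cite{2012.13215} as a black box. If you intend to re-derive them, that is a substantial undertaking in itself and should be flagged as such; if you intend to cite them, then your ``centered quadratic variation plus higher cumulant'' error analysis collapses to a single application of these bounds, which is much cleaner than what you describe.
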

The error estimates in~\eqref{iso local law}--\eqref{av local law} are optimal. In fact, elementary calculations show that e.g.\ for GUE \(W\) we have 
\begin{equation}\label{single G var}
  \begin{split}
    \sqrt{\E \abs{\braket{G-m}}^2} &= \frac{1}{N}\frac{\Im m}{\eta}\frac{1}{\abs{1-m^2}}\Bigl(1+\landauO*{\frac{1}{N \eta}}\Bigr) \sim \frac{1}{N\eta}, \\ 
    \sqrt{\E \abs{\braket{\vx,(G-m)\vy}}^2} &= \norm{\vx}\norm{\vy}\sqrt{\frac{\Im m\abs{m}}{N\eta}}\Bigl(1+\landauO*{\frac{1}{N \eta}}\Bigr)\sim\sqrt{\frac{\rho}{N\eta}},
  \end{split}
\end{equation}
demonstrating the optimality for \(k=1\). For several \(G\)'s we have by the resolvent identity \(GG^\ast=\Im G/\Im z\), and thus the optimality for \(k\ge 2\) follows from~\eqref{single G var}. 
\begin{remark}
  If the imaginary parts \(\Im z_i\) vary in size, and the intermediate matrices are simply identity matrices, \(A_i=I\), then the error bounds in~\eqref{eq local laws} can be improved to 
  \begin{equation}\label{multi G no A}
    \begin{split}
      \braket{\vx,G_1G_2\cdots G_k\vy} &= \braket{\vx,\vy}m[z_1,\ldots,z_k] + \landauO*{ \frac{1}{\prod_i \abs{\Im z_i}} \sqrt{\frac{\min_i \abs{\Im z_i}}{N}}  },\\
      \braket{G_1G_2\cdots G_k} &= m[z_1,\ldots,z_k] + \landauO*{ \frac{1}{N\prod_i \abs{\Im z_i}} },
    \end{split}
  \end{equation} 
  by using the improved bound from~\cite[Theorem 3.5]{2012.13218} instead of~\eqref{av underline}--\eqref{iso underline} later. However, in the absence of deterministic matrices the local law~\eqref{multi G no A} can alternatively also be derived from integrating the single-\(G\) local law, see~\cite[Lemma 3.9]{2103.05402}.
\end{remark}
\begin{example}We consider some examples. 
  \begin{enumerate}[label=(\roman*)]
    \item For \(k=2\) there are only two non-crossing partitions \(\set{12},\set{1|2}\) and thus 
    \[M_{[2]}= A_1 m_\circ[1]m_\circ[2]+\braket{A_1} m_\circ[1,2]=A_1 m[z_1]m[z_2]+\braket{A_1}(m[z_1,z_2]-m[z_1]m[z_2]).\]
    \item For \(k=3\) all non-crossing partitions are given in Figure~\ref{ex3} and thus we obtain 
    \begin{equation*}
      \begin{split}
        M_{[3]} &= A_1 A_2 m_\circ[1]m_\circ[2] m_\circ[3]  + A_2\braket{A_1} m_\circ[3] m_\circ[1,2] + \braket{A_1 A_2} m_\circ[2] m_\circ[1,3] \\
        &\qquad + A_1\braket{A_2} m_\circ[1] m_\circ[2,3] + \braket{A_1}\braket{A_2} m_\circ[1,2,3]
      \end{split}
    \end{equation*}
    with \(m_\circ[i]=m[z_i]\), \(m_\circ[i,j]=m[z_i,z_j]-m[z_i]m[z_j]\) and 
    \[ m_\circ[i,j,k] = m[z_i,z_j,z_k] - m[z_i]m[z_j,z_k] - m[z_j]m[z_i,z_k] -m[z_k]m[z_i,z_j] + 2 m[z_i]m[z_j]m[z_k]  \]
    due to~\eqref{f circ expl}.
    \item For \(A_1=\cdots=A_{k-1}=I\) we have \(\pTr_\cP(A_1,\ldots,A_{k-1})=I\) for any \(\cP\) and thus
    \[M_{[k]}=\sum_{\cP\in\NCP([k])}\prod_{B\in\cP}m_\circ[B]= m[z_1,\ldots,z_k]\]
    due to~\eqref{eq m kappa def}. 
  \end{enumerate}
\end{example}

The deterministic approximations \(M_{[k]}\) satisfy the following bounds (which will be proven in Section~\ref{sec:addres}).

\begin{lemma}\label{lemma M bound}
  For any \(k\ge1\) with \(\eta_*,\rho\) as in~\eqref{eq eta rho def} we have the bound 
  \begin{equation}\label{norm M bound} 
    \norm{M_{[k]}} \lesssim \frac{\rho}{\eta_*^{k-1}}\prod_{j=1}^{k-1}\norm{A_j},
  \end{equation}
  and if \(\mathfrak{a}\) out of the matrices \(A_1, \ldots, A_k\) are traceless, then we also have 
  \begin{equation}\label{tr M bound}
    \abs{\braket{M_{[k]}A_{k}}} \lesssim \frac{\rho}{\eta_*^{k-1-\lceil \mathfrak{a}/2\rceil}}\prod_{j=1}^{k}\norm{A_j}.
  \end{equation}
\end{lemma}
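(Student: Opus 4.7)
The plan is to reduce both bounds to a uniform estimate on the free cumulants \(m_\circ[B]\) together with the fact that the sum in~\eqref{eq M1..k def} has only Catalan-many summands. First I will control the divided differences via the integral representation \(m[z_{i_1},\ldots,z_{i_n}] = \int_{-2}^{2} \rho_\mathrm{sc}(x)\prod_{j=1}^n(x-z_{i_j})^{-1}\,\dif x\): the elementary estimate \(\int \rho_\mathrm{sc}(x)|x-z|^{-n}\,\dif x \leq \pi\rho(z)/\eta^{n-1}\) (obtained by extracting \(\eta^{n-2}\) from \(|x-z|^{n-2}\geq \eta^{n-2}\) and invoking \(\int \rho_\mathrm{sc}/|x-z|^2 = \pi\rho(z)/\eta\)) together with H\"older's inequality gives \(|m[z_{i_1},\ldots,z_{i_n}]|\lesssim \rho/\eta_*^{n-1}\) whenever \(n\geq 2\); for \(n=1\) I keep \(|m(z)|\lesssim 1\), refined when needed by the Dyson-equation identity \(|m(z)|^2 = \pi\rho(z)/(\eta + \pi\rho(z))\) coming from~\eqref{eq MDE}. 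Substituting these into the explicit formula~\eqref{f circ expl} for the cumulant yields \(|m_\circ[B]|\lesssim \rho/\eta_*^{|B|-1}\) for every block with \(|B|\geq 2\), the leading \(\cP = 1_B\) term dominating all subleading refinements.

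For~\eqref{norm M bound} I will estimate every summand of \(M_{[k]}\) individually. The matrix factor is trivial: \(\pTr_{K(\pi)}(A_1,\ldots,A_{k-1})\) is a product of some \(A_i\)'s multiplied by a scalar of the form \(\prod_B\braket{\prod_{j\in B}A_j}\), so that \(\|\pTr_{K(\pi)}(A_1,\ldots,A_{k-1})\|\leq \prod_{j=1}^{k-1}\|A_j\|\). Letting \(s\) denote the number of singleton blocks of \(\pi\) and using \(\sum_{|B|\geq 2}(|B|-1) = k - |\pi|\), the cumulant bound from Paragraph~1 gives \(\prod_{B\in\pi}|m_\circ[B]|\lesssim \rho^{|\pi|-s}/\eta_*^{k-|\pi|}\). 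A short case analysis, based on \(\rho\gtrsim \eta_*\) (which holds uniformly for \(|\Re z|\leq 3\)) and \(\rho,\eta_*\leq 1\), shows each such summand is dominated by \(C\rho/\eta_*^{k-1}\prod_j\|A_j\|\), and summing over the finitely many \(\pi\in\NCP[k]\) yields~\eqref{norm M bound}.

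The bound~\eqref{tr M bound} is the combinatorially subtle one. Reading off from Definition~\ref{def partial trace} that \(\braket{\pTr_{K(\pi)}(A_1,\ldots,A_{k-1})A_k} = \braket{A_1,\ldots,A_k}_{K(\pi)}\), the identity to analyse is
\[
\braket{M_{[k]}A_k} = \sum_{\pi\in\NCP[k]} \braket{A_1,\ldots,A_k}_{K(\pi)}\prod_{B\in\pi} m_\circ[B].
\]
Any traceless \(A_i\) sitting in a singleton block of \(\sigma\defeq K(\pi)\) produces a vanishing factor \(\braket{A_i}=0\), so only those \(\pi\) whose Kreweras complement confines every traceless index to a non-singleton block survive. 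The central counting step is then that if \(s_\sigma\) is the number of singletons of \(\sigma\), non-vanishing forces \(k - s_\sigma \geq \mathfrak{a}\), and since every non-singleton block of \(\sigma\) has size at least \(2\) we obtain \(|\sigma|\leq s_\sigma + (k-s_\sigma)/2 \leq k - \mathfrak{a}/2\), i.e.\ \(|\sigma|\leq k - \lceil\mathfrak{a}/2\rceil\). The Kreweras identity \(|\pi|+|K(\pi)|=k+1\) upgrades this into \(|\pi|\geq 1 + \lceil\mathfrak{a}/2\rceil\), so that the exponent \(\eta_*^{k-|\pi|}\) coming from the product estimate shrinks to at most \(\eta_*^{k-1-\lceil\mathfrak{a}/2\rceil}\), which is exactly what~\eqref{tr M bound} requires.

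The main obstacle I anticipate is the interaction of the Kreweras constraint on \(\sigma = K(\pi)\) with singleton-block contributions of \(\pi\) itself: the product estimate on \(\prod_B|m_\circ[B]|\) carries an extra \(\rho^{|\pi|-s}\) factor which is sharp only when the Dyson-equation bound \(|m(z)|\lesssim \sqrt{\rho/\eta}\) is used for singletons of \(\pi\). Properly handling this --- together with the extreme summand \(\pi=0_{[k]}\), which always contributes since \(K(0_{[k]}) = 1_{[k]}\) has no singletons --- is the only delicate point; everything else is routine bookkeeping: the integral bounds on divided differences, the trivial norm bound on the partial traces, and the finite Catalan sum over \(\NCP[k]\).
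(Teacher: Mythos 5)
Your proof follows essentially the same route as the paper's: the integral representation for divided differences gives $|m_\circ[B]|\lesssim\rho\,\eta_*^{1-|B|}$ for $|B|\ge 2$, then a per-block product bound (the paper records it as $\prod_{B\in\pi}|m_\circ[B]|\lesssim 1+\rho\,\eta_*^{|\pi|-k}$, which is the same content as your $\rho^{|\pi|-s}/\eta_*^{k-|\pi|}$ after bounding $\rho\le 1$), together with the trivial norm bound on $\pTr_{K(\pi)}$ and the Kreweras-complement counting forcing $|\pi|\ge 1+\lceil\mathfrak{a}/2\rceil$ in the traceless case. The one point you flag as delicate --- the all-singleton partition $\pi=0_{[k]}$ and its constant-order contribution --- is handled just as implicitly in the paper (via the ``$1$'' in~\eqref{mcirc pi} and $\rho\gtrsim\eta_*$ for $|\Re z_i|\le3$, $\eta_*\le1$), so the two arguments are at the same level of detail.
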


In fact, for generic matrices $A_i$ and $z_1=\dots=z_k$, with $\eta_*=\Im z_i$, these bounds are optimal, which shows that error terms in~\eqref{iso local law}--\eqref{av local law} are typically smaller than the deterministic leading terms whenever \(N\eta_*\rho\gg 1\), i.e.\ in the regime where \(\eta\) is larger than the local eigenvalue spacing.

\begin{remark}
  The bound~\eqref{norm M bound} is consistent with the corresponding bound on resolvent chains 
  \[\abs{\braket{\vx,G_1A_1\ldots A_{k-1}G_k\vy}} \lesssim \frac{\sqrt{\braket{\vx,\Im G_1\vx} \braket{\vy,\Im G_k\vy} }}{\eta_*^{k-1}}\le \frac{\rho}{\eta_*^{k-1}} \]
  obtained via Cauchy-Schwarz, the Ward identity 
  \begin{equation}
  \label{eq:ward}
  GG^\ast=\frac{\Im G}{\Im z},
    \end{equation}
   and the norm bound \(\norm{G}\le 1/\abs{\Im z}\). Similarly,~\eqref{tr M bound} in the critical \(\eta_*\sim N^{-1+\epsilon}\) regime is consistent with the recently established asymptotic orthogonality~\cite[Theorem~2.2]{2012.13215} of the eigenvectors \(\set{\bm u_i}\) of \(W\) with respect to traceless observables \(\braket{A}=0\) in the sense \(\abs{\braket{\bm u_i,A\bm u_j}}\prec N^{-1/2}\). Indeed, by spectral decomposition it follows that 
  \[ \abs{\braket{G_1A_1\ldots G_k A_k}} \le \frac{1}{N} \sum_{a_1\cdots a_k} \frac{\abs{\braket{\bm u_{a_1},A_1\bm u_{a_2}}}\cdots \abs{\braket{\bm u_{a_k},A_k\bm u_{a_1}}}}{\abs{\lambda_{a_1}-z_1}\cdots \abs{\lambda_{a_k}-z_k}} \prec N^{k-1-\mathfrak{a}/2}\]
  in the case when \(\mathfrak{a}\) of the matrices \(A_1,\ldots,A_k\) are traceless. 
\end{remark}

\section{Proof of the main results}
\label{sec:prmares}


Using the Helffer-Sj\"ostrand representation, we express \(f_1(W)A_1\cdots f_k(W)\) as an integral of products of resolvents at different spectral parameters. Without loss of generality may assume (see Section~\ref{sec:mresnew} for details) 
that $f_i\in H^k_0([-3,3])$ and we consider such functions naturally extended to the entire real line
by setting it zero outside of $[-3,3]$. 

We now recall the Helffer-Sj\"ostrand representation. For any 
 \(f\in H^k_0([-3,3])\)  
 we define its almost analytic extension by
\begin{equation}
\label{eq:almostanalest}
f_\mathbf{C}(z)=f_{\mathbf{C},k}(z)=f_{\mathbf{C},k}(x+\ii\eta):=\left[\sum_{j=0}^{k-1} \frac{(\ii\eta)^j}{j!} f^{(j)}(x)\right]\chi(\eta),
\end{equation}
with \(\chi(\eta)\) a smooth cut-off equal to one on \([-5, 5]\) and equal to zero on \([-10,10]^c\). Then by Helffer-Sj\"ostrand representation~\cite{MR1345723}, we have
\begin{equation}
\label{eq:HS}
f(\lambda)=\frac{1}{\pi}\int_\mathbf{C}\frac{\partial_{\overline{z}}f_\mathbf{C}}{\lambda-z}\,\dif^2 z,
\end{equation}
where \(\dif^2 z=\dif x\dif \eta\) denotes the Lebesgue measure on \(\mathbf{C}\equiv\mathbf{R}^2\) with \(z=x+\ii\eta\), and $\partial_{\overline{z}}:=(\partial_x+\ii \partial_\eta)/2$.

By~\eqref{eq:HS} we get
\begin{equation}\label{eq:intrep}
f_1(W)A_1\cdots f_k(W)=\frac{1}{\pi^k}\int_{\C^k} \prod_{i=1}^k\dif^2z_i\left[\prod_{i=1}^k (\partial_{\bar z}(f_i)_\C)(z_i)\right] G(z_1)A_1\cdots A_{k-1}G(z_k),
\end{equation}
where \(G(z_i):=(W-z_i)^{-1}\).
In particular, using~\eqref{eq:intrep}, we reduce the analysis of~\eqref{eq:niceintform} to proving a \emph{local law} for alternating chains of resolvents \(G_i=G(z_i)\) and bounded deterministic matrices \(A_i\), i.e.\ for 
\begin{equation}
\label{GAGAG}
G_1A_1G_2\cdots A_{k-1}G_k.
\end{equation}
See Theorem~\ref{local law} for the precise statement.

In Section~\ref{sec:mresnew} we prove our main result Theorem~\ref{theo:intgs}, then in Section~\ref{sec:corex} we will prove some of its corollaries and extensions. Finally, in Section~\ref{sec:addres} we prove some additional technical results used within the proof of Theorem~\ref{theo:intgs}.

\subsection{Proof of Theorem~\ref{theo:intgs}}
\label{sec:mresnew}

Set \(H^k:=H^k([-3,3])\).  If \(\max_i \norm{f_i}_{H^k}\ge N^{1-\xi}\), with \(\xi>0\) from the statement of Theorem~\ref{theo:intgs},  then there is nothing to prove since the lhs.\ of~\eqref{eq:niceintform} is clearly bounded by \(\prod_i \norm{f_i}_{L^\infty}\lesssim 1\). In the remainder of the proof we can thus assume that \(\norm{f_i}_{H^k}\le N^{1-\xi}\), which implies \(\norm{f^{(p)}}_{L^2}\le N^{1-\xi}\) for any \(1\le p\le k\). Additionally, without loss of generality we can assume that \(f\in H_0^k([-3,3])\), indeed if this is not the case is then it is enough to consider \(f_\chi(x):=f(x)\chi(x)\), with \(\chi\) a smooth cut-off function which is equal to one on \([-5/2,5/2]\) and equal to zero on \([-3,3]^c\). Then, by eigenvalue rigidity (see e.g.~\cite[Theorem 7.6]{MR3068390} or~\cite{MR2871147}), \(f_\chi(W)=f(W)\) with very high probability. 
 Furthermore, we consider any function  \(f\in H_0^k([-3,3])\) to be extended to $\R$ by zero outside of $[-3, 3]$.
We first prove the average case in~\eqref{eq:niceintform}, and then we explain the very minor changes required in the isotropic case.





In the following computations we will often use the bound 
\begin{equation}
\label{eq::bder}
\int_\mathbf{R} \dif x |\partial_{\overline{z}} f_{\mathbf{C},k}(x+\ii\eta)|\lesssim \eta^{k-1}\norm{f}_{H^k}
\end{equation}
from~\eqref{eq:almostanalest}. We now prove that the regime \(|\eta_i|\ge \eta_0\), for some \(i\in [k]\) and with \(\eta_0:=N^{-1+\xi/2}\), in the integral representation of \(\braket{f_1(W)A_1\dots f_k(W)A_k}\) from~\eqref{eq:intrep} is negligible. From now on we use the notation $f_\mathbf{C}(z)=f_{\mathbf{C},k}(z)$. We first consider the case when only a single \(|\eta_i|\le \eta_0\) and then we explain the minor changes in the case more then one \(\eta_i\)'s are small. Without loss of generality we assume that \(|\eta_1|\le \eta_0\); in this regime we will prove that 
\begin{equation}
\label{eq:prelred}
\begin{split}
&\Bigg|\int \dif x_1\cdots \dif x_k\int_{\substack{|\eta_i|\ge \eta_0,\\ i\in [2,k]}}\dif \eta_2\cdots\dif \eta_k \int_{-\eta_0}^{\eta_0} \dif \eta_1  \left( \prod_{i=1}^k(\partial_{\bar z}(f_i)_\C)(z_i)\right)\braket{G(z_1)A_1\cdots G(z_k)A_k}\Bigg|\\
&\qquad\prec \eta_0\max_i\norm{f_i}_{H^k},
\end{split}
\end{equation}



To prove the bound~\eqref{eq:prelred} we will use Stokes theorem in the following form:
\begin{equation}
\label{eq:stokes}
\int_{-10}^{10}\int_{\widetilde{\eta}}^{10} \partial_{\overline{z}}\psi(x+\ii\eta)h(x+\ii\eta)\, \dif \eta\dif x=\frac{1}{2\ii}\int_{-10}^{10}\psi(x+\ii\widetilde{\eta})h(x+\ii\widetilde{\eta})\, \dif x,
\end{equation}
for any \(\widetilde{\eta}\in[0,10]\), and for any \(\psi,h\in H^1(\mathbf{C})\equiv H^1(\mathbf{R}^2)\) such that \(\partial_{\overline{z}}h=0\) on the domain of integration and for \(\psi\) vanishing at the left, right and top boundary of the domain of integration. Note that by~\eqref{eq:stokes} we readily conclude that
\begin{equation}
\label{eq:ineeduse}
\begin{split}
&\int_\mathbf{R} \dif x_i \int_{\eta_0}^{10} \dif \eta_i  (\partial_{\overline{z}}(f_i)_\mathbf{C})(z_i) \braket{G(z_1)A_1\dots A_{i-1}G(z_i)A_i\dots G(z_k)A_k}\\
&\qquad=\int_\mathbf{R} \dif x_i (f_i)_\mathbf{C}(x_i+\ii\eta_0) \braket{G(z_1)A_1\dots A_{i-1}G(x_i+\ii\eta_0)A_i\dots G(z_k)A_k},
\end{split}
\end{equation}
for any fixed \(z_1,\dots, z_{i-1},z_{i+1},\dots,z_k\). Then, using~\eqref{eq:ineeduse} repeatedly for the \(z_2,\dots,z_k\)-variables, and defining \(\eta_r:=N^{-100}\), we conclude
\begin{equation}
\label{eq:intbpbettb}
\begin{split}
|\text{lhs.\ of~\eqref{eq:prelred}}|&=\Bigg|\int \prod_{i=1}^k \dif x_i \int_{-\eta_0}^{\eta_0} \dif \eta_1 (\partial_{\overline{z}}(f_1)_\mathbf{C})(x_1+\ii\eta_1)\prod_{i=2}^k (f_i)_\mathbf{C}(x_i+\ii\eta_0) \\
&\qquad\qquad\qquad\qquad\quad\times \braket{G(z_1)A_1G(x_2+\ii\eta_0)\cdots G(x_k+\ii\eta_0)A_k}|\Bigg| \\
&\prec \norm{f_1}_{H^k} \left(\int_{\eta_r\le|\eta_1|\le \eta_0}\left(\eta_1^{k-3/2}+\frac{\eta_1^{k-5/2}}{N}\right)\frac{1}{\eta_0^{k-3/2}}\dif \eta_1 +\int_{|\eta_1|\le \eta_r}\eta_1^{k-2}\eta_0^{-k+1}  \dif \eta_1\right) \\
&\lesssim \eta_0\norm{f_1}_{H^k}.
\end{split}
\end{equation}
In the first inequality  in the regime \(\eta_r = N^{-100}\le|\eta_1|\le \eta_0\) we used~\eqref{eq::bder} for
 \(\int \dif x_1 |\partial_{\overline{z}}(f_1)_\mathbf{C}|\), and the bound 
\begin{equation}
\label{eq:needisoaswell}
\begin{split}
&\braket{G(z_1)A_1G(x_2+\ii\eta_0)\cdots G(x_k+\ii\eta_0)A_k} \\
&\le \braket{G(z_1)A_1A_1^*G(z_1)^*}^{1/2}\braket{G(x_2+\ii\eta_0)A_2\cdots G(x_k+\ii\eta_0)A_k A_k^*G(x_k+\ii\eta_0)^*\cdots
A_2^* G(x_2+\ii\eta_0)^*}^{1/2} \\
&\prec\frac{1}{\sqrt{\eta_1\eta_0}\eta_0^{k-2}}\braket{\Im G(z_1)}^{1/2}\braket{\Im G(x_2+\ii \eta_0)}^{1/2} \\
&\prec \frac{1}{\sqrt{\eta_1\eta_0}\eta_0^{k-2}}\left(1+\frac{1}{N\eta_1}\right).
\end{split}
\end{equation}
Here, to go from first to the second line we used a Schwarz inequality, to go from the second to the third line we used the norm bounds 
$\norm{A_1A_1^*}\lesssim 1$ and 
\[
\norm{A_2\cdots G(x_k+\ii\eta_0)A_kA_k^*G(x_k+\ii\eta_0)^*\cdots
A_2^* }\lesssim \eta_0^{-2k+4}
\]
 and Ward identity \eqref{eq:ward}. Finally, 
  to go to the last line we used the averaged local law from Theorem~\ref{single G local law} to show the boundedness
  of $\braket{\Im G}$.  In the complementary regime \(|\eta_1|\le N^{-100}\) we used the norm bound
\[
\big|\braket{G(z_1)A_1G(x_2+\ii\eta_0)A_2\cdots G(x_k+\ii\eta_0)A_k}\big|\le \norm{G(z_1)A_1}\prod_{i=2}^k \norm{G(x_i+\ii\eta_0)A_i} \lesssim \frac{1}{\eta_1\eta_0^{k-1}},
\]
together with the estimate~\eqref{eq::bder} for \(\partial_{\overline{z}}(f_1)_\mathbf{C}\). Note that in the penultimate inequality of~\eqref{eq:intbpbettb} we also used that
\[
\norm{(f_i)_\mathbf{C}(\cdot+\ii\eta_0)}_{L^1}\lesssim\norm{(f_i)_\mathbf{C}}_{L^2}\lesssim \sum_{j=0}^{k-1}\frac{\eta_0^j}{j!} \norm{f_i^{(j)}}_{L_2}\lesssim 1,
\]
by~\eqref{eq:almostanalest} and \(\norm{f_i}_{H^k}\le N^{1-\xi}\).




The bound for the regime when more than one \(\eta_i\) are smaller than \(\eta_0\) (in absolute value) is completely analogous giving an even smaller bound. In particular, if \(|\eta_1|,\ldots, |\eta_l|\le \eta_0\), with \(l\in [k]\), then we perform an integration by parts in the \(z_{l+1}, \ldots,z_k\)-variables and use the bound~\eqref{eq::bder} for \(\partial_{\overline{z}}(f_i)_\mathbf{C}\) for \(i\in [l]\) (see~\eqref{eq:intbpbettb} below for the case \(l=1\)), which gives a bound
\[
\eta_0^{(l-1)k+1}\left(\prod_{i=1}^l \norm{f_i}_{H^k}\right)\le \eta_0^{(l-1)k+1}N^{l-1}\max_i\norm{f_i}_{H^k}
\]
by \(\norm{f_i}_{H^k}\le N^{1-\xi}\). This concludes the proof of the fact that the small \(\eta_i\) regime is negligible. We now estimate the regime when \(|\eta_i|\ge \eta_0\) for any \(i\in [k]\).



By~\eqref{eq:intrep} and the local law~\eqref{av local law}, we conclude that
\begin{equation}
\label{eq:mintegralformhs}
\begin{split}
&\braket{f_1(W)A_1\cdots f_k(W)A_k} \\
&\qquad=\frac{1}{\pi^k}\int_{\R^k}\int_{\eta_0\le |\eta_i|\le 10}\dif^2z_1\cdots \dif^2z_k (\partial_{\bar z}(f_1)_\C)(z_1)\cdots (\partial_{\bar z}(f_k)_\C)(z_k)\braket{M_{[k]}A_k} \\
&\qquad\quad+\mathcal{O}_\prec\left(\eta_0\max_i\norm{f_i}_{H^k}\right).
\end{split}
\end{equation}
Note that in~\eqref{eq:mintegralformhs}, proceeding as in~\eqref{eq:intbpbettb}, we estimated the error term coming from the local law~\eqref{av local law} by
\begin{equation}
\label{eq:mintegralformhs2}
\begin{split}
&\frac{1}{\pi^k}\int_{\R^k}\int_{\eta_0\le |\eta_i|\le 10}\dif^2{\bm z} \prod_{i=1}^k(\partial_{\bar z}(f_i)_\C)(z_i)\braket{(G(z_1)A_1\dots G(z_k)-M_{[k]})A_k} \\
&\qquad\qquad\qquad\quad=\mathcal{O}_\prec\left(N^{-1}\max_i\norm{f_i}_{H^k}\right),
\end{split}
\end{equation}
with \(\dif^2{\bm z}:=\dif^2 z_1\dots \dif^2 z_k\). More precisely, in~\eqref{eq:mintegralformhs2} we considered the regime \(\eta_1\le \eta_2\le \cdots \le \eta_k\) (all the other regimes give the same contribution by symmetry) and performed \(k-1\) integration by parts in the \(z_i\)-variables, \(i\in [2,k]\), as in~\eqref{eq:ineeduse}, and then estimated the remaining \(\partial_{\overline{z}}(f_1)_\mathbf{C}(z_1)\) by~\eqref{eq::bder}. Note that the factors $N^{\xi/2}$ from $\eta_0=N^{-1+\xi/2}$, and $|\log \eta_0|$ from the integration of the error term in the local law were all included in the $\mathcal{O}_\prec(\cdot)$ notation.
 


By the formula for \(M_{[k]}\) in~\eqref{eq M1..k def} and the definition of \(m_\circ\) from Definition~\ref{m kappa def}, to compute the rhs.\ of~\eqref{eq:mintegralformhs} it is enough to compute the integral of \(m[z_1,\ldots,z_p]\) for any \(p\in\mathbf{N}\). This technical lemma will be proven at the end of this section.
\begin{lemma}\label{lem:divdiffinths}
For any \(p\in \N\) denote \({\bm z}:=(z_1,\ldots, z_p)\in\mathbf{C}^p\), then it holds
\begin{equation}\label{eq:smartinths}
\frac{1}{\pi^p}\int_{\R^p}\int_{\eta_r\le |\eta_i|\le 10}\dif^2 {\bm z}\, \prod_{i=1}^p (\partial_{\overline{z}} (f_i)_\mathbf{C})(z_i)  m[z_1,\ldots,z_p]=\braket*{\prod_{i=1}^p f_i}_{\mathrm{sc}}+\mathcal{O}(\eta_r),
\end{equation}
where \(\eta_r:=N^{-100}\).
\end{lemma}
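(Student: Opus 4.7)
\textbf{Proof plan for Lemma~\ref{lem:divdiffinths}.} The starting point is the integral representation
\[
m[z_1,\ldots,z_p]=\int_{-2}^{2}\rho_{\mathrm{sc}}(x)\prod_{i=1}^{p}\frac{1}{x-z_i}\,\dif x,
\]
which I would establish by induction on $p$. The base case $p=1$ is the Stieltjes transform representation $m(z)=\int\rho_{\mathrm{sc}}(x)/(x-z)\,\dif x$. For the inductive step, inserting the formula for $m[z_1,\ldots,z_{p-1}]$ and $m[z_2,\ldots,z_p]$ into~\eqref{div diff dist} and using the telescoping identity $(x-z_p)^{-1}-(x-z_1)^{-1}=(z_p-z_1)/[(x-z_1)(x-z_p)]$ inside the integral gives the claim for distinct $z_1,z_p$; coinciding arguments are handled by analytic continuation, which is consistent with the derivative definition in Definition~\ref{div diff def}.

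Substituting this representation into the LHS of~\eqref{eq:smartinths} and swapping the $x$-integration to the outside via Fubini—justified since $|(x-z_i)^{-1}|\le\eta_r^{-1}$ and the $\partial_{\overline z}(f_i)_\mathbf{C}$'s are bounded and compactly supported—yields
\[
\mathrm{LHS}=\int_{-2}^{2}\rho_{\mathrm{sc}}(x)\prod_{i=1}^{p}\Phi_i(x)\,\dif x,\qquad \Phi_i(x):=\frac{1}{\pi}\int_{\eta_r\le|\eta|\le10}(\partial_{\overline z}(f_i)_\mathbf{C})(z)\,\frac{\dif^2z}{x-z}.
\]
The Helffer-Sj\"ostrand formula~\eqref{eq:HS} applied to $f_i$ gives $f_i(x)=\frac{1}{\pi}\int_{\mathbf{C}}(\partial_{\overline z}(f_i)_\mathbf{C})(z)(x-z)^{-1}\dif^2z$; since $(f_i)_\mathbf{C}$ vanishes for $|\eta|\ge10$ by the cutoff $\chi$, the difference $R_i(x):=f_i(x)-\Phi_i(x)$ is given by the same integral restricted to $|\eta|\le\eta_r$.

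To bound $R_i$ I use that on $|\eta|\le 5$, where $\chi\equiv 1$, a direct computation from~\eqref{eq:almostanalest} (analogous to~\eqref{eq::bder}) yields $|\partial_{\overline z}(f_i)_\mathbf{C}(x'+\ii\eta)|\lesssim |\eta|^{k-1}|f_i^{(k)}(x')|$. Combined with the elementary estimate $|x-z|\ge|\eta|$ and $\|f_i^{(k)}\|_{L^1([-3,3])}\lesssim\|f_i\|_{H^k}$, this gives
\[
|R_i(x)|\lesssim\int_{-\eta_r}^{\eta_r}|\eta|^{k-2}\,\dif\eta\;\|f_i\|_{H^k}\lesssim \eta_r^{k-1}\|f_i\|_{H^k}\lesssim \eta_r,
\]
uniformly in $x\in[-2,2]$, using $k\ge2$ and the standing assumption $\|f_i\|_{H^k}\le N^{1-\xi}$ with $\eta_r=N^{-100}$. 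Expanding $\prod_i\Phi_i=\prod_i f_i+\text{(terms containing at least one }R_j\text{)}$ and bounding each error term by $\prod_{i\ne j}\|f_i\|_{L^\infty}\cdot\|R_j\|_{L^\infty}\lesssim\eta_r$ (using $\|f_i\|_{L^\infty}\sim1$), we conclude that $\mathrm{LHS}=\int_{-2}^{2}\rho_{\mathrm{sc}}(x)\prod_i f_i(x)\,\dif x+\mathcal{O}(\eta_r)=\braket{\prod f_i}_{\mathrm{sc}}+\mathcal{O}(\eta_r)$. The only conceptual step is recognising the integral representation of the divided difference of $m$; once this is in hand the proof is a routine Helffer-Sj\"ostrand computation, and the extreme smallness of $\eta_r$ absorbs all polynomial $N$-factors effortlessly.
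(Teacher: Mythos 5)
Your proof is correct and rests on the same key idea as the paper: the integral representation \(m[z_1,\ldots,z_p]=\int\rho_{\mathrm{sc}}(x)\prod_i(x-z_i)^{-1}\dif x\), which you establish by the same induction (it is exactly~\eqref{eq:veruseq}), followed by Fubini to reduce to the one-dimensional truncated Helffer--Sj\"ostrand integrals \(\Phi_i(x)\). The only genuine difference is the last step: the paper applies Stokes' theorem~\eqref{eq:stokes} to the strip \(\eta_r\le|\eta_i|\le10\), turning \(\Phi_i(x)\) into a boundary Poisson integral at height \(\pm\eta_r\) which is then recognised as \(\pi f_i(x)+\mathcal{O}(\eta_r)\); you instead note that \(f_i(x)-\Phi_i(x)\) is, by the full formula~\eqref{eq:HS}, exactly the contribution from \(|\eta|<\eta_r\), and bound that region directly via \(|\partial_{\bar z}(f_i)_\mathbf{C}(x'+\ii\eta)|\lesssim|\eta|^{k-1}|f_i^{(k)}(x')|\) and \(|x-z|\ge|\eta|\). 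Your route is slightly more elementary since it avoids the Stokes/Poisson-kernel computation. One small inaccuracy: the last inequality in the chain \(\eta_r^{k-1}\|f_i\|_{H^k}\lesssim\eta_r\) is not literally true for \(k=2\) under the standing assumption \(\|f_i\|_{H^k}\le N^{1-\xi}\); what you actually prove is \(|R_i(x)|\lesssim\eta_r^{k-1}\|f_i\|_{H^k}\), which can exceed \(\eta_r\) by a factor \(N^{1-\xi}\). This is entirely harmless (it is still \(\lesssim N^{-99}\), dwarfed by every other error in the main theorem), and the paper's own \(\mathcal{O}(\eta_r)\) in the Poisson-kernel step likewise tacitly absorbs a factor involving \(\|f_i'\|_\infty\) and \(\log(1/\eta_r)\), so the looseness is shared by both arguments.
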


Finally, using that by~\eqref{eq:prelred} and Lemma~\ref{lemma M bound} the regime \(\eta_i\in [\eta_r,\eta_0]\) can be added back to~\eqref{eq:mintegralformhs} at the price of an error \(\eta_0\max_i\norm{f_i}_{H^k}\), and using~\eqref{eq:smartinths} repeatedly together with the definition of \(\mathrm{sc}_\circ\) given in the statement of Theorem~\ref{theo:intgs}, we conclude the proof of the average case in~\eqref{eq:niceintform}, modulo the proof of Lemma~\ref{lem:divdiffinths}.

The proof of the isotropic case in~\eqref{eq:niceintform} is very similar. The only differences are the following: (i) to bound the small \(\eta_i\)-regime we have to replace~\eqref{eq:needisoaswell} by
\[
\big|\braket{{\bm x}, G(z_1)A_1G(x_2+\ii\eta_0)\cdots G(x_k+\ii\eta_0) {\bm y}}\big|\prec \frac{1}{\sqrt{\eta_1\eta_0}\eta_0^{k-2}}\left(1+\frac{1}{\sqrt{N\eta_1}}\right),
\]
which still gives exactly the same bound~\eqref{eq:prelred}; (ii) to estimate the error term coming from the isotropic local law~\eqref{iso local law} (used in the regime when \(|\eta_i|\ge \eta_0\) for all \(i\in [k]\)) we have to replace~\eqref{eq:mintegralformhs2} by
\begin{equation}
\label{eq:finbas}
\begin{split}
&\frac{1}{\pi^k}\int_{\R^k}\int_{\eta_0\le |\eta_i|\le 10}\dif^2{\bm z} \prod_{i=1}^k(\partial_{\bar z}(f_i)_\C)(z_i)\braket{{\bm x},(G(z_1)A_1\dots G(z_k)-M_{[k]}) {\bm y}} \\
&\qquad\qquad\qquad\quad=\mathcal{O}_\prec\left(N^{-1/2}\max_i\norm{f_i}_{H^k}\right).
\end{split}
\end{equation}
The proof of~\eqref{eq:finbas} is exactly the same as the proof of~\eqref{eq:mintegralformhs2}.\qed

\subsection{Proof of the corollaries and extensions of Theorem~\ref{theo:intgs}}
\label{sec:corex}

\begin{proof}[Proof of Corollary~\ref{corr:intgs}]
  The leading term~\eqref{eq:niceintform corr} is given exactly by the leading term in~\eqref{eq:niceintform} choosing \(f_i(x)=e^{\ii s_i x}\), and using the definition of \(\varphi(x)\) in~\eqref{eq:defphi}. The bound for the error term readily follows from \(\norm{f_i}_{H^k}\lesssim |s_i|^k\) and \(\norm{f_i}_{L^\infty}\lesssim 1\).
\end{proof}

\begin{proof}[Proof of Corollary~\ref{cor:misscor}]
  For the proof of~\eqref{eq therm s} we note that in~\eqref{eq:niceintform corr} only \(\pi\in\NCP[k]\) contribute for which all blocks of \(K(\pi)\) contain at least two elements. However, this can only be the case if \(\pi\) contains at least two singleton blocks and the claim follows since \(\varphi_\circ[\set{i}]\lesssim (1+\abs{s_i})^{-3/2}\). 
\end{proof}

\begin{proof}[Proof of Corollary~\ref{cor therm}]

  For the proof of~\eqref{eq therm a} we note that due to the ordering of times it follows that 
  \begin{equation}
    \label{eq:decphi}
      \varphi_\circ[B]= \begin{cases}
        \landauO{(1+\min_i\abs{s_i})^{-3/2}}, & B\subsetneq[k],\\
        1, & B=[k].
      \end{cases} 
  \end{equation}
  For \(a\) traceless \(A_i\)'s only partitions \(\pi\) for which \(K(\pi)\) has at most \(\lfloor \mathfrak{a}/2\rfloor\) blocks contribute, i.e.\ only partitions \(\pi\) with at least \(\lceil \mathfrak{a}/2\rceil+1\) blocks and also~\eqref{eq therm a} follows. For the proof of~\eqref{eq therm a iso} we similarly note that only partitions contribute for which \(K(\pi)\) as at most \(\lfloor \mathfrak{a}/2\rfloor+1\) blocks (since the block containing \(k\) has no trace restriction).  
\end{proof}
\begin{proof}[Proof of Corollary~\ref{corr free indep}]
  By linearity it is clearly sufficient to check that for \(k\ge 2\), \(i_1\ne i_2\ne \ldots\ne i_k\in[p]\) and any \(a_i\in\N\) we have 
\begin{equation}\label{asymp freeness}
  \braket*{\Bigl(A_{i_1}(t_{i_1})^{a_1}-\braket{A_{i_1}(t_{i_1})^{a_{1}}}\Bigr)\cdots\Bigl(A_{i_k}(t_{i_k})^{a_k}-\braket{A_{i_k}(t_{i_k})^{a_k}}\Bigr)}  =\landauo{1}.
\end{equation}
This is indeed the case for \(1\ll\abs{t_1-t_2}\ll N^{1/k-\epsilon}\), since the lhs.\ of~\eqref{asymp freeness} simplifies to 
\[\braket*{(A_{i_1}^{a_1})^\circ(t_{i_1})\cdots (A_{i_k}^{a_k})^\circ(t_{i_k}) } = \landauO*{N^\xi \frac{\abs{t_1-t_2}^k }{N}+(1+\abs{t_1-t_2})^{-3} },\]
where \(A^\circ:=A-\braket{A}\) denotes the traceless part of \(A\), and we used~\eqref{eq therm}. 
\end{proof}

\begin{proof}[Proof of the bound in Extension~\ref{rem:mesver}]
The proof of this bound is completely analogous to the proof of~\eqref{eq:niceintform}, the only difference is that instead of~\eqref{eq:almostanalest} we consider the almost analytic extension
\[
f_{\mathbf{C},k}(z)=f_{\mathbf{C},k}(x+\ii\eta):=\left[\sum_{j=0}^{k-1} \frac{(\ii\eta)^j}{j!} f^{(j)}(x)\right]\chi(N^a\eta),
\]
for each \(f=f_i\), where \(f_i=g_i(N^a(x-E))\) and \(a\in (0,1)\), \(|E|<2\).
\end{proof}

\begin{proof}[Proof of the bound in Extension~\ref{rem:lessreg}]
The proof of this bound is similar to the proof of Theorem~\ref{theo:intgs}, the only difference is that the regime \(|\eta_i|\le \eta_0\) and the error term in the local law are estimated differently. Similarly to the proof of Theorem~\ref{theo:intgs}, we prove the bound in Extension~\ref{rem:lessreg} in the average case and then we explain the very minor changes in the isotropic case. Without loss of generality we assume that $f_i\in H_0^p([-3,3])$. We first show how to bound the small \(\eta_i\)-regimes; here we again only consider the case when only \(|\eta_1|\le \eta_0\). Recall that for \(H^p\) functions we have
\begin{equation}
\label{eq:newwroseb}
\int_\mathbf{R} \dif x_i |\partial_{\overline{z}}(f_i)_\mathbf{C}|\lesssim \eta_i^{p-1}\norm{f_i}_{H^p}
\end{equation}
by~\eqref{eq::bder}, where we used the short-hand notation $f_\mathbf{C}(z)=f_{\mathbf{C},p}(z)$. Then, using this bound, we get
\begin{equation}
\label{eq:smallboundopt}
\left|\prod_{i=1}^k \int \dif x_i\prod_{i=2}^k\int_{|\eta_i|\ge \eta_0}\dif \eta_i \int_{-\eta_0}^{\eta_0} \dif \eta_1 \prod_{i=1}^k(\partial_{\bar z}(f_i)_\C)(z_i)\braket{G(z_1)A_1\cdots G(z_k)A_k}\right|\prec \eta_0^{p-1}\prod_i \norm{f_i}_{H^p},
\end{equation}
for some \(N^{-1}\ll \eta_0\ll 1\) that we will choose shortly. In the estimate~\eqref{eq:smallboundopt} we also used the norm bound \(|\braket{G(z_1)A_1\cdots G(z_k)A_k}|\lesssim \prod_i |\eta_i|^{-1}\). Similarly to~\eqref{eq:mintegralformhs2}, using the bound~\eqref{eq:newwroseb} and the average local law~\eqref{av local law}, we conclude that
\begin{equation}
\label{eq:bounderrlllaw}
\left|\prod_{i=1}^k \int \dif x_i\prod_{i=1}^k\int_{|\eta_i|\ge \eta_0}\dif \eta_i  \prod_{i=1}^k(\partial_{\bar z}(f_i)_\C)(z_i)\braket{(G(z_1)A_1\dots G(z_k)-M_{[k]})A_k}\right|\prec \frac{1}{N\eta_0^{k-p}}\prod_i \norm{f_i}_{H^p}
\end{equation}
Optimising the bounds in~\eqref{eq:smallboundopt} and~\eqref{eq:bounderrlllaw} we find that \(\eta_0=N^{-1/(k-1)}\), concluding the proof of the bound stated in Extension~\ref{rem:lessreg} in the average case. The proof in the isotropic case is exactly the same, the only difference is that~\eqref{eq:bounderrlllaw} has to be replaced by
\begin{equation}
\label{eq:basthopd}
\begin{split}
&\left|\prod_{i=1}^k \int \dif x_i\prod_{i=1}^k\int_{|\eta_i|\ge \eta_0}\dif \eta_i  \prod_{i=1}^k(\partial_{\bar z}(f_i)_\C)(z_i)\braket{{\bm x}, (G(z_1)A_1\dots G(z_k)-M_{[k]}) {\bm y}}\right| \\
&\qquad\quad\prec \frac{1}{\sqrt{N}} \left(\frac{1}{\eta_0^{k-p-1/2}}\vee 1\right)\prod_i \norm{f_i}_{H^p},
\end{split}
\end{equation}
where we used the isotropic local law~\eqref{iso local law}. Optimising the error terms in~\eqref{eq:smallboundopt} and~\eqref{eq:basthopd} we conclude that in the isotropic case \(\eta_0=N^{-1/(2k-3)}\).
\end{proof}

\subsection{Proof of additional results used within the proof of Theorem~\ref{theo:intgs}}
\label{sec:addres}

\begin{proof}[Proof Lemma~\ref{lem:divdiffinths}]
  We claim that
  \begin{equation}
  \label{eq:veruseq}
  m[z_1,\ldots,z_k]=\int_\mathbf{R} \rho(x) \prod_{i=1}^p \frac{1}{(x-z_i)} \dif x.
  \end{equation}
  The proof of~\eqref{eq:veruseq} follows by induction. For \(p=1\)~\eqref{eq:veruseq} is trivial, for \(p=2\), we have
  \[
  m[z_1,z_2]=\frac{m[z_1]-m[z_2]}{z_1-z_2}=\int \frac{\rho(x)}{z_1-z_2} \left[\frac{1}{x-z_1}-\frac{1}{x-z_2}\right]\,\dif x=\int \frac{\rho(x)}{(x-z_1)(x-z_2)} \dif x.
  \]
  Now assume that~\eqref{eq:veruseq} holds for \(p\), then it holds for \(p+1\) as well:
  \[
  \begin{split}
  m[z_1,z_2, z_3,\ldots, z_{p+1}]&=\frac{m[z_1,z_3,\ldots,z_{p+1}]-m[z_2,z_3,\ldots, z_{p+1}]}{z_1-z_2} \\
  &=\int \frac{\rho(x)}{z_1-z_2} \left[\frac{1}{x-z_1}-\frac{1}{x-z_2}\right] \prod_{i=3}^p \frac{1}{(x-z_i)}\, \dif x \\
  &=\int \rho(x) \prod_{i=1}^{p+1} \frac{1}{(x-z_i)} \dif x,
  \end{split}
  \]
  concluding the proof of~\eqref{eq:veruseq}. Finally, using~\eqref{eq:veruseq} we readily conclude~\eqref{eq:smartinths}, where we used that for any fixed \(x\in\R\) we have
  \[
  \begin{split}
  \int_\mathbf{R}\dif x_i\int_{\eta_r\le |\eta_i|\le 10}\dif \eta_i (\partial_{\overline{z}} (f_i)_\mathbf{C}(z_i))\frac{1}{x-z} &=\int_\mathbf{R}\dif x_i (f_i)_\mathbf{C}(x_i+\ii\eta_r)\frac{\eta_r}{(x-x_i)^2+\eta_r^2} \\
  &=\pi f(x)+\mathcal{O}(\eta_r),
  \end{split}
  \]
  We remark that in the first equality we used~\eqref{eq:stokes}.
\end{proof}

\begin{proof}[Proof of Lemma~\ref{lemma M bound}]
  We first claim that for \(k\ge 2\)
  \begin{equation}\label{m bound}
    \abs{m[z_1,\ldots,z_k]}\le \eta_\ast^{1-k} \max_i \abs{\Im m(z_i)} 
  \end{equation}
  for any \(z_1,\ldots,z_k\). The bound~\eqref{m bound} follows immediately from~\eqref{eq:veruseq} and estimating 
  \[ 
  \begin{split}
    \abs{m[z_1,\ldots, z_k]}&\le \int\rho_\mathrm{sc}(x) \prod_{i=1}^k \frac{1}{\abs{x-z_i}}\dif x \\
    &\le \frac{\eta_\ast^{2-k}}{2} \int \rho_\mathrm{sc}(x) \Bigl(\frac{1}{\abs{x-z_1}^2}+\frac{1}{\abs{x-z_2}^2}\Bigr)\dif x \\
    &= \frac{\eta_\ast^{2-k}}{2}\Bigl( \frac{\abs{\Im m(z_1)}}{\abs{\Im z_1}}+\frac{\abs{\Im m(z_2)}}{\abs{\Im z_2}}\Bigr)\le \eta_\ast^{1-k} \max_i \abs{\Im m(z_i)}.
  \end{split}\]

  From~\eqref{m bound} it follows that for any \(B\subset[k]\) with \(\abs{B}\ge 2\) we have \(\abs{m_\circ[B]}\lesssim \rho\eta_\ast^{1-\abs{B}}\) due to~\eqref{f circ expl} where single-block partition of \(B\) yields the worst bound, and thus 
  \begin{equation}\label{mcirc pi}
    \abs*{\prod_{B\in\pi}m_\circ[B]}\lesssim 1+\rho \eta_\ast^{\abs{\pi}-k}
  \end{equation} 
  for any \(\pi\in\NCP[k]\), concluding the proof of~\eqref{norm M bound} using definition~\eqref{eq M1..k def}. Finally, for the proof of~\eqref{tr M bound} note that after taking the trace \(\braket{M_{[k]}A_k}\) with \(M_{[k]}\) as in~\eqref{eq M1..k def} for \(\mathfrak{a}\) traceless \(A_i\) only those \(\pi\in\NCP[k]\) give a non-zero contribution for which \(K(\pi)\) has at most \(\abs{K(\pi)}\le k-\lceil \mathfrak{a}/2\rceil\) blocks. Equivalently, \(\pi\) necessarily has at least \(\abs{\pi}\ge \lceil \mathfrak{a}/2\rceil+1\) blocks, concluding also the proof of~\eqref{tr M bound} using~\eqref{mcirc pi}.  
\end{proof}

\section{Proof of the local law}
\subsection{Alternative representations of \texorpdfstring{\(m,m_\circ,M\)}{m,m0,M}}
To prepare the proof of Theorem~\ref{local law} we first provide explicit alternative representations of the divided differences \(m[\cdot]\) and their free-cumulant version \(m_\circ[\cdot]\) based upon non-crossing graphs instead of non-crossing partitions. We begin with the definition of non-crossing graphs.
\begin{definition}
  Let \(S\subset \N\) be a finite set of integers arranged on a circle as in Definition~\ref{kreweras}. We call an undirected graph \((S,E)\) \emph{crossing} if there exist two edges \((ab),(cd)\in E\) with \(a<c<b<d\), otherwise we call it \emph{non-crossing} and we denote the set of \emph{non-crossing graphs} by \(\NCG(S)\). We call a graph \((S,E)\) a \emph{dissection graph} if for \(S=\set{s_1<\cdots<s_k}\) we have \((s_1s_2),(s_2s_3),\ldots,(s_k s_1)\not\in E\), i.e.\ if all edges dissect the polygon spanned by \(s_1,\ldots, s_k\), and denote the set of dissection graphs by \(\NCG_\mathrm{d}(S)\). Finally, we denote the set of connected graphs by \(\NCG_\mathrm{c}(S)\).
\end{definition}

Each non-crossing graph \((S,E)\in\NCG(S)\) trivially induces a non-crossing partition \(\cP\in\NCP(S)\) with blocks representing the vertices in the connected components of \((S,E)\) and thus we can represent 
\begin{equation}\label{ncg ncp}
  \NCG(S)=\bigsqcup_{\cP\in\NCP(S)}\prod_{B\in\cP}\NCG_\mathrm{c}(B),
\end{equation}
see Figure~\ref{graph part cc} for an example.

\begin{figure}[htbp]
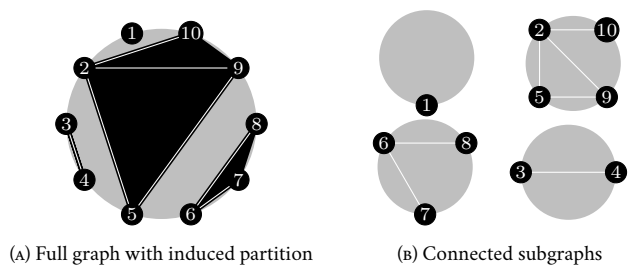

\centering
\subcaptionbox{Full graph with induced partition\label{ex4a}}[.40\linewidth]{
  \circEmb[ptwidth=8pt,number=10,parts={{2,5,9,10},{3,4},{6,7,8}},edges={2/5,5/9,2/9,2/10,3/4,6/8,6/7},vlabel=1,radius=4em]{}}
  \subcaptionbox{Connected subgraphs\label{ex4b}}[.30\linewidth]{
  \circEmb[ptwidth=8pt,vertlabels={1},number=1,vlabel=1,radius=2em]{}\hspace{.8em}%
  \circEmb[ptwidth=8pt,vertlabels={2,5,9,10},edges={2/5,5/9,2/9,2/10},vlabel=1,radius=2em]{}%
  \circEmb[ptwidth=8pt,vertlabels={6,7,8},edges={6/7,8/6},vlabel=1,radius=2em]{}%
  \circEmb[ptwidth=8pt,vertlabels={3,4},edges={3/4},vlabel=1,radius=2em]{}}
\caption{Decomposition of the graph \(\set{(25),(59),(29),(2\,\,10),(34),(68),(67)}\) according to partition \(\set{1|2,5,9,10|3,4|6,7,8}\) of its connected components.}\label{graph part cc}
\end{figure}

\begin{lemma}\label{lemma q}
  Let \(k\in\N\) and let \(z_1,\ldots,z_k\in\C\). For \(S\subset [k]\) and the divided difference \(m[S]:=m[\set{z_s\given s\in S}]\) we have 
  \begin{equation}\label{eq m repr}
    m[S] = \biggl(\prod_{s\in S} m_s\biggr) \sum_{E\in\NCG(S)} q_E, \qquad q_E:=\prod_{ab\in E} q_{ab}, \quad q_{ab}:=\frac{m_a m_b}{1-m_a m_b}
  \end{equation}
  with \(m_a:=m(z_a)\). Moreover, for the free-cumulant function of \(m\) we have 
  \begin{equation}\label{eq mcirc repr}
    m_\circ[S] = \biggl(\prod_{s\in S} m_s\biggr) \sum_{E\in\NCG_\mathrm{c}(S)} q_E.
  \end{equation}
\end{lemma}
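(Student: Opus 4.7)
My plan is to prove~\eqref{eq m repr} first and then derive~\eqref{eq mcirc repr} from it via M\"obius inversion. Indeed, substituting the decomposition~\eqref{ncg ncp} into the right-hand side of~\eqref{eq m repr} gives
\[
m[S] = \sum_{\cP \in \NCP(S)} \prod_{B \in \cP} \biggl[\Bigl(\prod_{s \in B} m_s\Bigr) \sum_{E \in \NCG_\mathrm{c}(B)} q_E \biggr],
\]
so comparing with Definition~\ref{m kappa def} and invoking the uniqueness of the moment-cumulant relation on the lattice $(\NCP(S),\le)$, each bracketed expression must equal $m_\circ[B]$, which is precisely~\eqref{eq mcirc repr}.

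To prove~\eqref{eq m repr}, I would induct on $|S|$. The case $|S|=1$ is trivial since $\NCG(\{s\})=\{\emptyset\}$, so both sides equal $m_s$. For $|S|=2$, the Dyson equation $-1/m_s = m_s + z_s$ gives $z_a - z_b = (m_a - m_b)(1-m_a m_b)/(m_a m_b)$, hence $m[\{a,b\}] = (m_a - m_b)/(z_a - z_b) = m_a m_b/(1-m_a m_b) = m_a m_b(1 + q_{ab})$, matching the two graphs in $\NCG(\{a,b\})$. For the inductive step with $|S|=k\ge 3$ and $S=\{s_1<\cdots<s_k\}$, the divided-difference recursion~\eqref{div diff dist}, combined with the inductive hypothesis and the same MDE-derived identity for $z_{s_k}-z_{s_1}$, reduces the problem, after canceling the common factor $\prod_{s\in S}m_s$, to the purely combinatorial/algebraic identity
\[
(m_{s_k}-m_{s_1})(1-m_{s_1}m_{s_k}) \sum_{E\in\NCG(S)} q_E = m_{s_k}\sum_{E\in\NCG(S\setminus\{s_1\})} q_E - m_{s_1}\sum_{E\in\NCG(S\setminus\{s_k\})} q_E.
\]

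To verify this identity, I would first exploit that $(s_1,s_k)$ is an outer polygon edge and hence can be freely toggled in any non-crossing graph: using $1+q_{s_1 s_k} = (1-m_{s_1}m_{s_k})^{-1}$, one obtains
\[
\sum_{E\in\NCG(S)} q_E = \frac{1}{1-m_{s_1}m_{s_k}}\sum_{\substack{E\in\NCG(S)\\(s_1,s_k)\notin E}} q_E,
\]
which cancels the $(1-m_{s_1}m_{s_k})$ prefactor on the left-hand side. The remaining task is to match $(m_{s_k}-m_{s_1})\sum_{E\in\NCG(S),\,(s_1,s_k)\notin E} q_E$ with the telescoping right-hand side; I would accomplish this by decomposing each sum according to the set of neighbors of $s_1$ (respectively $s_k$) in $E$. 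Since the edges from such a central vertex to its neighbors dissect the disk into sub-polygons on whose vertex sets the remaining edges of $E$ form independent non-crossing graphs, one obtains a product structure on both sides that can be matched term-by-term via the elementary relation $q_{ab}/(1+q_{ab}) = m_a m_b$. The main obstacle is this combinatorial bookkeeping. A possibly cleaner alternative is to use the Stieltjes integral representation $m[S]=\int \rho_\mathrm{sc}(x)\prod_s(x-z_s)^{-1}\dif x$ and partial fractions (for generic pairwise distinct $z_s$'s) to obtain $m[S] = \sum_s m_s\prod_{t\ne s}(z_s-z_t)^{-1}$, then rewrite each factor $z_s-z_t$ via the MDE in terms of $m_s$ and $m_t$, and verify the resulting rational-function identity in $m_{s_1},\ldots,m_{s_k}$ by matching poles and residues in a chosen variable; the case of coinciding $z_s$ then follows by analytic continuation.
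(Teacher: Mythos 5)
Your derivation of \eqref{eq mcirc repr} from \eqref{eq m repr} via \eqref{ncg ncp} and the uniqueness of the free-cumulant function is exactly the paper's argument. Your inductive framework for \eqref{eq m repr} also matches: the base cases from the Dyson equation, the divided-difference recursion, and the reduction---using $z_a-z_b=(m_a-m_b)(1-m_am_b)/(m_am_b)$ and the toggle of the outer edge $(s_1,s_k)$---to the algebraic identity
\[
(m_{s_k}-m_{s_1})\sum_{\substack{E\in\NCG(S)\\(s_1,s_k)\notin E}} q_E \;=\; m_{s_k}\!\!\sum_{E\in\NCG(S\setminus\{s_1\})}\!\! q_E \;-\; m_{s_1}\!\!\sum_{E\in\NCG(S\setminus\{s_k\})}\!\! q_E
\]
are all correct and agree with the paper.

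The gap is that you stop precisely at the step that carries the content of the proof: you sketch ``decompose by the set of neighbors of $s_1$'' and concede ``the main obstacle is this combinatorial bookkeeping,'' but do not show that the bookkeeping closes. The paper executes this via a slightly sharper decomposition: partitioning $\NCG^{\lnot(1n)}[1,n]$ by the \emph{maximal} vertex $j$ connected to $1$, i.e.\ $\NCG^{\lnot(1n)}[1,n]=\NCG(1,n]\sqcup\bigsqcup_{j=2}^{n-1}\bigl(\NCG^{(1j)}[j]\times\NCG[j,n]\bigr)$, together with $(1+q_{1j})m_1m_j=q_{1j}$. This yields the two-sided recursion \eqref{eq graph recursion}, from which one solves for $m_1^{-1}\sum_{\NCG(1,n]}q_E$ and $m_n^{-1}\sum_{\NCG[1,n)}q_E$ and observes that the $\sum_j$ terms cancel in the difference---exactly the cancellation your display requires. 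Decomposing by the maximal neighbor (rather than the full neighbor set) is what produces a clean product over two sub-polygons rather than a sum over all dissections, making the cancellation transparent. So you should either carry out that decomposition or adopt the paper's. Your Stieltjes/partial-fraction alternative (using $m[S]=\sum_s m_s\prod_{t\ne s}(z_s-z_t)^{-1}$ and rewriting $z_s-z_t$ via the MDE) is a genuinely different route not taken in the paper, but it reduces to a nontrivial rational identity in the $m_s$'s that you also do not verify; it is not obviously lighter than the inductive bookkeeping.
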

We stress that in contrast to the formulas~\eqref{eq m kappa def}--\eqref{f circ expl} for \(f[S]\) and \(f_\circ [S]\) valid for any function \(f\),
the representations~\eqref{eq m repr}--\eqref{eq mcirc repr} of \(m[S]\) and \(m_\circ[S]\) in terms of \(q\) hold only for the specific function \(m\), the solution to equation~\eqref{eq MDE}.
\begin{proof}[Proof of Lemma~\ref{lemma q}]
  We first prove~\eqref{eq m repr} by induction on \(\abs{S}\) with \(\abs{S}=1\) being trivial. The \(\abs{S}=2\) case 
  \begin{equation}\label{eq lemma S2 case}
    m[z_i,z_j] = \frac{m_i-m_j}{z_i-z_j} = \frac{m_i-m_j}{m_j+1/m_j-m_i-1/m_i} = \frac{m_im_j}{1-m_im_j}=q_{ij}=m_im_j(1+q_{ij})
  \end{equation}
  follows directly from~\eqref{eq MDE}.

  For the induction step we may consider, without loss of generality, the set \(S=[n]\) and spectral parameters \(z_1\ne z_n\). The general case follows by relabelling the spectral parameters and continuity in case of equal spectral parameter (for a direct argument in the case of equal spectral parameters see Remark~\ref{lemma q alt proof} below). From the induction hypothesis we may assume that~\eqref{eq m repr} has been established for \(S'=(1,n],[1,n)\). Using the non-crossing property we partition \(\NCG[1,n]\) into 
  \begin{equation}\label{eq G part1} 
    \NCG[1,n] = \NCG^{(1n)}[1,n] \sqcup \NCG^{\lnot(1n)}[1,n],
  \end{equation}
  i.e.\ the subsets of graphs containing or not containing the edge \((1n)\), with 
  \[\NCG^{\lnot(ab)}[a,b]:=\set{E\in\NCG[a,b]\given (ab)\not\in E}, \quad \NCG^{(ab)}[a,b]:=\NCG^{\lnot(ab)}[a,b]\times\set{\set{(ab)}}\] 
  and then further partition \(\NCG^{\lnot(1n)}[n]\) according to the maximal vertex connected to \(1\), i.e. 
  \begin{equation}\label{eq G part2}
    \NCG^{\lnot (1n)}[1,n]= \NCG(1,n] \sqcup \bigsqcup_{j=2}^{n-1}(\NCG^{(1j)}[j]\times \NCG[j,n]).
  \end{equation}
  We then obtain
  \begin{equation}\label{eq graph recursion}
    \begin{split}
      \sum_{E\in \NCG[1,n]} \frac{q_E}{1+q_{1n}} &= \sum_{E\in \NCG^{\lnot(1n)}[1,n]} q_E\\
       &=\sum_{E\in \NCG(1,n]} q_E+\sum_{j=2}^{n-1}m_1m_j\Bigl( \sum_{E\in \NCG[1,j]} q_E \Bigr)\Bigl( \sum_{E\in \NCG[j,n]} q_E \Bigr) \\
      &=  \sum_{E\in \NCG[1,n)} q_E+\sum_{j=2}^{n-1}m_jm_n\Bigl( \sum_{E\in \NCG[1,j]} q_E \Bigr)\Bigl( \sum_{E\in \NCG[j,n]} q_E \Bigr),
    \end{split}
  \end{equation}
  where the first equality in~\eqref{eq graph recursion} follows from~\eqref{eq G part1}, and the second equality then follows from~\eqref{eq G part1}--\eqref{eq G part2} together with \((1+q_{1j})m_1m_j=q_{1j}\). The argument for the third equality is completely symmetric. 
 Then from~\eqref{eq lemma S2 case} and the induction hypothesis we have
  \begin{equation}\label{eq graph rec 2}
    \begin{split}
      \frac{m(1,n]-m[1,n)}{z_n-z_1}  &=\frac{q_{1n}}{m_n-m_1}\Bigl(m(1,n]-m[1,n)\Bigr)\\
      &= \biggl(\prod_{i=1}^n m_i\biggr) \frac{q_{1n}}{m_n-m_1} \biggl(\frac{1}{m_1} \sum_{E\in\NCG(1,n]}q_E - \frac{1}{m_n}\sum_{E\in\NCG[1,n)}q_E  \biggr).
    \end{split}
  \end{equation}
  By solving~\eqref{eq graph recursion} for \(m_1^{-1}\sum_{E\in \NCG(1,n]} q_E\) and \(m_n^{-1}\sum_{E\in \NCG[1,n)} q_E\) 
  and noting that for the difference of the two used in~\eqref{eq graph rec 2} the \(\sum_j\) terms cancel, we obtain 
  \begin{equation*}
    \begin{split} 
      & \frac{m(1,n]-m[1,n)}{z_n-z_1} \\
      & = \biggl(\prod_{i=1}^n m_i\biggr) \frac{q_{1n}}{m_n-m_1} \frac{1}{1+q_{1n}}\biggl(\frac{1}{m_1}-\frac{1}{m_n}\biggr) \sum_{E\in\NCG[1,n]}q_E = \biggl(\prod_{i=1}^n m_i\biggr) \sum_{E\in\NCG[1,n]}q_E,
    \end{split}
  \end{equation*}
  completing the induction step. 

  We now turn to~\eqref{eq mcirc repr} and reorganise~\eqref{eq m repr} in terms of the vertex sets of the connected components as in~\eqref{ncg ncp} and obtain 
  \[ m[S] = \sum_{\cP\in\NCP(S)} \prod_{B\in\cP} \biggl(\Bigl(\prod_{s\in B}m_s\Bigr) \sum_{E\in \NCG_\mathrm{c}(B)} q_E\biggr),\]
  so that~\eqref{eq mcirc repr} follows immediately from Definition~\ref{m kappa def} and the uniqueness of \(m_\circ\), c.f.~\eqref{f circ expl}. 
\end{proof}
\begin{remark}[Alternative proof of Lemma~\ref{lemma q} for equal spectral parameters]\label{lemma q alt proof}
  In the case where all spectral parameters are equal we consider \(S=[n]\) with \(z_1=\cdots=z_n=z\). It is well known that the generating functions \(a_n(w),b_n(w)\) of the graphs in \(\NCG([n]),\NCG_\mathrm{d}([n])\) with \(j\) edges satisfy~\cite[Eq.~(9)]{MR345872}
  \begin{equation}
    \begin{split}
      a_n(w)&=\begin{cases}
        1, & n=1,\\
        1+w, &n=2,\\
        (1+w)^n b_n(w), & n>2.
      \end{cases}\\
    b_{n+1}(w) &=  \begin{cases}
      1, & n=2,\\(1+2w)b_n(w) + \frac{2w(1+w)}{n}b_n'(w),& n\ge 3,
    \end{cases}
    \end{split}
  \end{equation}
  and therefore~\eqref{eq m repr} is equivalent to
  \begin{equation}\label{eq mn an}
    \frac{m^{(n-1)}}{(n-1)!} = m^n a_n(q),
  \end{equation}
  which is obvious for \(n=1\). For \(n=2\) the identity~\eqref{eq mn an} follows from differentiating~\eqref{eq MDE} yielding \((1-m^2)m'=m^2\) and therefore \(m'=q=m^2(1+q)\), and similarly for \(n=3\) we obtain 
  \begin{equation}\label{q prime eq} 
    \frac{m''}{2}=\frac{q'}{2} = \frac{m^3}{(1-m^2)^3}=  m^3 (1+q)^{3} = m^3 a_3(q). 
  \end{equation}
  Assuming~\eqref{eq mn an} for some \(n\ge 3\) and differentiating yields 
  \[ 
    \begin{split}
      \frac{m^{(n)}}{n!} &= \Bigl(m^{n-1} q (1+q)^n+ m^n q'(1+q)^{n-1}\Bigr) b_n(q)   + \frac{m^n q'(1+q)^n b_n'(q)}{n} \\
      &= m^{n+1} (1+q)^{n+1}\Bigl((1+2q)b_n(q)+\frac{2q(1+q)b_n'(q)}{n}\Bigr) = m^{n+1} a_{n+1}(q)
    \end{split}
  \]
  from~\eqref{q prime eq}, completing the induction step. 
\end{remark}
Using Lemma~\ref{lemma q} and~\eqref{ncg ncp} we immediately obtain an alternative representation of \(M_{[k]}\) in the form 
\begin{equation}\label{M 1..k alt}
  \begin{split}
    M_{[k]} &=\Bigl(\prod_{i=1}^k m_i\Bigr) \sum_{\pi\in\NCP[k]} \pTr_{K(\pi)}(A_1,\ldots,A_{k-1}) \prod_{B\in\pi} \biggl(\sum_{E\in\NCG_\mathrm{c}(B)} q_E \biggr)\\
    &= \Bigl(\prod_{i=1}^k m_i\Bigr)\sum_{E\in\NCG[k]} \pTr_{K(\pi(E))}(A_1,\ldots,A_{k-1}) q_E,
  \end{split}
\end{equation}
where \(\pi(E)\in\NCP[k]\) denotes the non-crossing partition induced by the connected components of \(E\in\NCG[k]\). Using~\eqref{M 1..k alt} we obtain a third equivalent (this time recursive) definition of \(M_{[k]}\) and also a simple  tracial recursive relationship expressing the divided difference structure. For \(1\le i<j\le k\) we define \(M_{[i,j]}\) exactly as in~\eqref{eq M1..k def} in terms of the non-crossing partitions \(\pi\in\NCP([i,j])\) and partial traces of the matrices \(A_i,\ldots,A_{j-1}\). For brevity of notations we furthermore set \(M_{(i,j]}:=M_{[i+1,j]}\) and \(M_{[i,j)}:=M_{[i,j-1]}\).
\begin{lemma}
  For any \(k\ge 2\) we have the recursive relations 
  \begin{equation}\label{M1k recursion}
      \begin{split}
        M_{[k]} &=  m_1\biggl( A_1  M_{(1,k]} + q_{1k}\braket{A_1 M_{(1,k]}}+ \sum_{j=2}^{k-1} \braket*{M_{[1,j]}} \Bigl(M_{[j,k]}+q_{1k}\braket*{M_{[j,k]}}\Bigr)\biggr)\\
        &=  m_k\biggl(  M_{[1,k)} A_{k-1}   + q_{1k}\braket{M_{[1,k)}A_{k-1} }+ \sum_{j=2}^{k-1} \braket*{M_{[1,j]}} \Bigl(M_{[j,k]}+q_{1k}\braket*{M_{[j,k]}}\Bigr)\biggr)
      \end{split}
  \end{equation}
  and if \(z_1\ne z_k\) then also
  \begin{equation}\label{eq M1k tr recursion}
    \braket*{M_{[k]}} = \frac{\braket*{M_{[1,k)} A_{k-1} - A_1 M_{(1,k]} }}{z_1-z_k}.
  \end{equation}
\end{lemma}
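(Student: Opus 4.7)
The plan is to prove~\eqref{M1k recursion} first using the graph representation~\eqref{M 1..k alt} of $M_{[k]}$, and then deduce the tracial recursion~\eqref{eq M1k tr recursion} from it. The decomposition of the sum over $E\in\NCG[k]$ appearing in~\eqref{M 1..k alt} mirrors the decomposition of $\NCG[k]$ used in the proof of Lemma~\ref{lemma q}, classifying $E$ according to its structure at vertex~$1$.

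The first step is to split $\NCG[k]=\NCG^{\lnot(1k)}[k]\sqcup\NCG^{(1k)}[k]$ depending on whether the chord $(1,k)$ lies in $E$. The key geometric fact is that, for any $E'\in\NCG^{\lnot(1k)}[k]$, adding the chord $(1,k)$ creates a new cell (enclosed by this chord and the circular arc~$k$) that contains no vertices, so arc~$k$ becomes a singleton block of $K(\pi(E'\cup\{(1,k)\}))$. Consequently $B(k)=\{k\}$ in the new Kreweras complement, and one obtains
\[
\pTr_{K(\pi(E'\cup\{(1,k)\}))}(A_1,\ldots,A_{k-1})=\braket*{\pTr_{K(\pi(E'))}(A_1,\ldots,A_{k-1})}I,
\]
so the $q_{1k}$-containing part of the sum over $\NCG[k]$ is exactly $q_{1k}$ times the (scalar) trace of the $q_{1k}$-free part.

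The residual sum over $\NCG^{\lnot(1k)}[k]$ is further decomposed according to the maximal vertex $j\in\set{2,\ldots,k-1}$ directly connected to vertex~$1$ in $E'$, or alternatively vertex~$1$ being isolated, exactly as in the identity $\NCG^{\lnot(1n)}[1,n]=\NCG(1,n]\sqcup\bigsqcup_{j=2}^{n-1}(\NCG^{(1j)}[1,j]\times \NCG[j,n])$ from the proof of Lemma~\ref{lemma q}. Two companion cell-theoretic identities suffice: (i) when vertex~$1$ is isolated in $E'$, arcs~$k$ and~$1$ lie in the same cell of the Kreweras complement, which yields
\[
\pTr_{K(\pi(E'))}(A_1,\ldots,A_{k-1})=A_1\,\pTr_{K(\pi(E'|_{[2,k]}))}(A_2,\ldots,A_{k-1});
\]
(ii) when $E'=E_1\cup E_2$ with $E_1\in\NCG^{(1j)}[1,j]$ and $E_2\in\NCG[j,k]$ glued at vertex~$j$, the chord $(1,j)\in E_1$ separates the two halves exactly as the chord $(1,k)$ did in the first step, leading to
\[
\pTr_{K(\pi(E'))}(A_1,\ldots,A_{k-1})=\braket*{\pTr_{K(\pi(E_1))}(A_1,\ldots,A_{j-1})}\,\pTr_{K(\pi(E_2))}(A_j,\ldots,A_{k-1}).
\]
Summing over $E'$ and invoking the definitions of $M_{(1,k]}$, $M_{[1,j]}$, $M_{[j,k]}$, together with the arithmetic identity $q_{1j}/(1+q_{1j})=m_1m_j$ (obtained from $q_{ij}=m_im_j/(1-m_im_j)$ which absorbs the edge $(1,j)$ contained in $E_1$ into the full sum over $\NCG[1,j]$), reproduces the four-term structure on the right-hand side of~\eqref{M1k recursion} with the correct prefactor $m_1$. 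The second recursion in~\eqref{M1k recursion} is obtained by the completely analogous decomposition at vertex~$k$.

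To derive~\eqref{eq M1k tr recursion}, one takes the normalised trace of both forms of~\eqref{M1k recursion} and uses cyclicity to get
\[
\braket{M_{[k]}}=m_1(1+q_{1k})\bigl(\braket{A_1M_{(1,k]}}+S\bigr)=m_k(1+q_{1k})\bigl(\braket{M_{[1,k)}A_{k-1}}+S\bigr)
\]
with $S=\sum_{j=2}^{k-1}\braket{M_{[1,j]}}\braket{M_{[j,k]}}$. Eliminating $S$ from the two equalities and using $m_1m_k(1+q_{1k})=q_{1k}$ and $m_k-m_1=(z_k-z_1)q_{1k}$ (both immediate from~\eqref{eq lemma S2 case}), the surviving factor of $q_{1k}$ cancels and one lands exactly on~\eqref{eq M1k tr recursion}. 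The main obstacle is rigorously establishing the three cell-theoretic identities for the partial trace under chord addition and pivot splitting; each is transparent from the geometric picture of the Kreweras complement but requires careful bookkeeping of how blocks of $K(\pi)$, and in particular the block $B(k)$ containing vertex~$k$, change under these graph operations.
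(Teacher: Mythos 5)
Your proposal is correct and follows essentially the same route as the paper: you start from the graph representation~\eqref{M 1..k alt}, reuse the vertex-$1$ decomposition of $\NCG[k]$ from~\eqref{eq G part1}--\eqref{eq G part2}, establish how $\pTr_{K(\pi(E))}$ behaves under chord addition and splitting at a pivot vertex, and then eliminate the common sum $S$ to obtain~\eqref{eq M1k tr recursion}. Your cell-theoretic identities (a)--(c) match the paper's intermediate identities (with (a) stated a bit more explicitly than the paper's remark that the modified partial trace is scalar, but this stronger form is indeed what the argument implicitly uses via its $q_{1j}/(1+q_{1j})$ resummation), so the approach is the same.
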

\begin{proof}
  From the partition~\eqref{eq G part1}--\eqref{eq G part2} and~\eqref{M 1..k alt} we obtain
  \begin{align}\label{M1..k induction proof}
      &\frac{M_{[k]}}{m_1\cdots m_k} \\\nonumber
      &= \sum_{E\in \NCG(1,k]} q_E \Bigl(\pTr_{K(\pi(E))}A_{[1,k)} +q_{1k} \pTr_{K(\pi(E\cup\set{(1k)}))} A_{[1,k)} \Bigr) \\\nonumber
      &\;+ \sum_{j=2}^{k-1} \sum_{E_j\in \NCG^{(1j)}[j]\times \NCG[j,k]} q_{E_j} \Bigl(\pTr_{K(\pi(E_j))}A_{[1,k)}+q_{1k}\pTr_{K(\pi(E_j\cup\set{(1k)}))}A_{[1,k)}\Bigr).
  \end{align}
  Since \(E\in\NCG(1,k]\) implies \(\set{1}\in\pi(E)\) it follows that in \(K(\pi(E))\) both \(1,k\) are in the same block and therefore 
  \[\pTr_{K(\pi(E))}A_{[1,k)}=A_1\pTr_{K(\pi(E))|_{(1,k]}}A_{(1,k)}.\]
  Similarly, for \(E_j=E_j^1\sqcup E_j^2\in \NCG^{(1j)}[j]\times\NCG[j,k]\) we note that 
  \[\pTr_{K(\pi(E_j))}A_{[1,k)}=\pTr_{K(\pi(E_j^1))|_{[j]}}A_{[1,j)} \pTr_{K(\pi(E_j^2))|_{[j,k]}}A_{[j,k)} \] since \(1,j\) are necessarily in the same block from \(\pi(E)\) and therefore \([1,j)\) and \([j,k)\) are in different blocks of \(K(\pi(E))\). Finally, we note that  
  \[\pTr_{K(\pi(E\cup\set{(1k)}))} A_{[1,k)}=\braket{\pTr_{K(\pi(E\cup\set{(1k)}))}A_{[1,k)}}\]
  and 
  \[\sum_{E_j^1\in \NCG^{(1j)}[j]}  q_{E_j^1} \braket{\pTr_{K(\pi(E_j^1))}A_{[1,j)}}=\frac{q_{1j}}{1+q_{1j}}\sum_{E_j^1\in \NCG[j]} q_{E_j^1} \braket{\pTr_{K(\pi(E_j^1))}A_{[1,j)}},\]
  so that~\eqref{M1..k induction proof} yields the first equality in~\eqref{M1k recursion}. The proof of the second equality in~\eqref{M1k recursion} is completely analogous by partitioning the non-crossing graphs according to the edges connected to \(k\) rather than \(1\), hence details are omitted.

  Using both equalities of~\eqref{M1k recursion} we obtain 
  \begin{equation}
    \begin{split}
      \braket*{M_{[k]}} &= \frac{m_1}{m_1-m_k}\braket*{M_{[k]}} - \frac{m_k}{m_1-m_k} \braket*{M_{[k]}}\\
      &= (1+q_{1k})\frac{m_1m_k}{m_1-m_k}\braket*{M_{[1,k)}A_{k-1}-A_1 M_{(1,k]}}
    \end{split}
  \end{equation}
  and~\eqref{eq M1k tr recursion} follows from 
  \[(1+q_{1k})\frac{m_1 m_k}{m_1-m_k}=\frac{q_{1k}}{m_1-m_k}=\frac{1}{z_1-z_k}\] 
  with the last equality due to
  \[ \frac{z_1-z_k}{m_1-m_k} = \frac{m_k-m_1}{m_1-m_k} + \frac{1/m_k-1/m_1}{m_1-m_k} = \frac{1}{m_1m_k}-1 = \frac{1-m_1m_k}{m_1m_k} = \frac{1}{q_{1k}}.\qedhere\]
\end{proof}

\subsection{Proof of Theorem~\ref{local law}}
\label{sec:proofllaw}
The proof of Theorem~\ref{local law} is inductive over \(K\ge 1\). For \(K=k=1\) both~\eqref{iso local law}--\eqref{av local law} follow directly from Theorem~\ref{single G local law}. For the induction we proceed in several steps:
  \begin{enumerate}[label=(S\arabic*)]
    \item\label{av local law step} Proof of~\eqref{av local law} for \(k=K\) assuming~\eqref{av local law} for \(k<K\).
    \begin{enumerate}[label=(S\arabic{enumi}\alph*)]
      \item\label{av local law stepI} Proof of~\eqref{av local law} for \(k=K\), \(A_k=I\) and \(\rho_k=\max_j \rho_j\).
      \item\label{av local law stepZ} Proof of~\eqref{av local law} for \(k=K\) traceless \(\braket{A_k}=0\).
    \end{enumerate}
    \item\label{iso local law step} Proof of~\eqref{iso local law} for \(k=K\) assuming~\eqref{av local law} for \(k\le K\) and~\eqref{iso local law} for \(k<K\).
  \end{enumerate}
  We first note that~\ref{av local law stepI}--\ref{av local law stepZ} imply~\ref{av local law step}. Indeed, by cyclicity we may rearrange \(\braket{G_1A_1\cdots G_k A_k}\) in such a way that \(\rho_k=\max_j\rho_j\) and by decomposing \(A_k=\braket{A_k}I+(A_k-\braket{A_k}I)\) we conclude~\ref{av local law step} from linearity,~\ref{av local law stepI} for \(\braket{A_k}I\) and~\ref{av local law stepZ} for \(A_k-\braket{A_k}I\). It thus remains to establish~\ref{av local law stepI},~\ref{av local law stepZ} and~\ref{iso local law step}. 
  
  The main input for arguments below is the bound on renormalized chains of resolvents established in~\cite[Theorem~4.1, Remark~4.3]{2012.13215} in the form of
  \begin{align}\label{av underline}
    \abs*{\braket{\un{WG_1A_1\cdots G_k A_k}}} \prec \frac{\rho}{N\eta_*^k}\\\label{iso underline}
    \abs*{\braket{\vx,\un{WG_1A_1\cdots G_k }\vy}} \prec \sqrt{\frac{\rho}{N\eta_*}}\frac{1}{\eta_*^{k-1}}
  \end{align}
  Here the renormalization, denoted by underlining, is defined as 
  \[\un{Wf(W)}:=Wf(W)-\wt\E \wt W (\partial_{\wt W} f)(W),\]
  with \(\partial_{\wt W}\) denoting the directional derivative in direction \(\wt W\) and \(\wt W\) denotes an independent GUE-matrix with expectation \(\wt\E\). Using the resolvent identity \(WG-zG=I\), equation~\eqref{eq MDE} and
  \[
  \wt\E\wt W\partial_{\wt W} G=-\wt\E \wt W G \wt W G= -\braket{G}G
  \]
  we obtain 
  \begin{equation}\label{G underline}
    G=m-m\un{WG}+m\braket{G-m}G. 
  \end{equation}
  \begin{proof}[Proof of~\ref{av local law stepI}]
    In case \(\Im z_1 \Im z_k<0\) we use the resolvent identity with the abbreviation \(G_{[a, b]}:=G_a A_a G_{a+1}\cdots A_{b-1}G_b\) to obtain 
  \begin{equation}\label{eq local law sing case}
    \begin{split}
      \braket{G_{[k]}} &= \braket*{\frac{G_1-G_k}{z_1-z_k} A_1 G_{(1,k)} A_{k-1}} \\
      &= \frac{\braket{M_{[1,k)} A_{k-1} - A_1 M_{(1,k]} }}{z_1-z_k}  + \landauOprec*{\frac{1}{N\eta_*^k}} = \braket{M_{[k]}}+ \landauOprec*{\frac{1}{N\eta_*^k}}
    \end{split}
  \end{equation}
  from~\eqref{eq M1k tr recursion}, the averaged local law for \(k-1\) resolvents and \(\abs{z_1-z_k}\ge \eta_1\vee \eta_k\ge \eta_*\). 
  
  In case \(\Im z_1 \Im z_k>0\) we instead use~\eqref{G underline} and
  \[\un{WG_{[k]}}=\un{WG_1}A_1G_{(1,k]} + \sum_{j=2}^{k}\braket{G_{[j]}}G_{[j,k]} \]
  to obtain 
  \begin{equation}\label{eq GGGk resolution}
    \begin{split}
      G_{[k]} & = m_1 A_1 G_{(1,k]} - m_1 \un{W G_{[k]}} + \sum_{j=2}^{k-1} m_1\braket{G_{[j]}} G_{[j,k]} \\
      &\qquad + m_1 m_k \braket{G_{[k]}} + m_1\braket{G_1-m_1}G_{[k]} + m_1\braket{G_{[k]}}(G_k-m_k).
    \end{split}
  \end{equation}
  From the averaged local law~\eqref{av local law} for up to \(k-1\) resolvents we have
  \begin{equation}\label{G1..j local law}
    \braket{G_{[j]}}\braket{G_{[j,k]}}=\braket{M_{[j]}}\braket{M_{[j,k]}}+ \landauOprec*{\frac{\rho}{N\eta_*^k}}
  \end{equation}
  from 
  \[
  \begin{split}
    \abs{\braket{G_{[j]}}}&\le\sqrt{\braket{(G_1A_1)(G_1A_1)^\ast}} \sqrt{\braket{G_{2\cdots j}^\ast G_{2\cdots j}}}\lesssim \frac{\sqrt{\braket{\Im G_1} \braket{\Im G_j}}}{(\eta_1\eta_j)^{1/2}\eta_2\cdots \eta_{j-1}}\le \frac{\rho}{\eta_*^{j-1}},
  \end{split}  \]
  where in the second inequality we used Ward identity \eqref{eq:ward}.
  By taking the averaged trace of~\eqref{eq GGGk resolution} we obtain 
    \begin{equation}\label{eq GGGk resolution2}
      \begin{split}
        &\Bigl(1-m_1m_k-m_1\braket{G_1-m_1}-m_1\braket{G_k-m_k}\Bigr)\braket{G_{[k]} }  \\
        &\qquad = m_1 \braket{A_1 G_{(1,k]}} - m_1 \braket{\un{W G_{[k]}}} + \sum_{j=2}^{k-1} m_1\braket{G_{[j]}} \braket{G_{[j,k]}} \\
        &\qquad = m_1\braket{A_1 M_{(1,k]}} + \sum_{j=2}^{k-1} m_1 \braket{M_{[j]}}\braket{M_{[j,k]}} + \landauOprec*{\frac{\rho}{N\eta_\ast^k}},
      \end{split}
    \end{equation}
    where in the second step we used~\eqref{av underline},~\eqref{G1..j local law} and 
    \[ \braket{A_1 G_{(1,k]}} = \braket{A_1 M_{(1,k]}} + \landauOprec*{\frac{1}{N\eta^{k-1}_\ast}}= \braket{A_1 M_{(1,k]}} + \landauOprec*{\frac{\rho}{N\eta^{k}_\ast}} \]
    from the induction hypothesis. By~\eqref{M1k recursion} the first two terms on the r.h.s.\ of~\eqref{eq GGGk resolution2} can be written as 
    \begin{equation}\label{eq M rec}
      \begin{split}
         m_1\Bigl( \braket{A_1  M_{(1,k]}}+ \sum_{j=2}^{k-1} \braket*{M_{[1,j]}} \braket*{M_{[j,k]}}\Bigr)=\frac{\braket{M_{[k]}}}{1+q_{1k}}=(1-m_1m_k)\braket{M_{[k]}}
      \end{split}
    \end{equation}
    so that due to 
    \[\abs{\braket{G_i-m_i}\braket{M_{[k]}}}\prec \frac{\rho}{N\eta_\ast^k}\] 
    from~\eqref{norm M bound} and Theorem~\ref{single G local law} we finally conclude 
    \begin{equation}\label{eq GGGk resolution3}
      \begin{split}
        &\Bigl(1-m_1m_k-m_1\braket{G_1-m_1}-m_1\braket{G_k-m_k}\Bigr)\braket{G_{[k]}-M_{[k]}} = \landauOprec*{\frac{\rho}{N\eta_\ast^k}}.
      \end{split}
    \end{equation}
    By elementary estimates from the definition of \(m\) in~\eqref{eq MDE} and the fact that \(\Im z_1 \Im z_k>0\) we easily conclude \(\abs{1-m_1m_k}\gtrsim \rho_k=\rho\) and therefore together with \(\abs{\braket{G_i-m_i}}\prec 1/(N\eta_\ast) \ll \rho\) we immediately conclude~\eqref{eq local law sing case}.
  \end{proof} 
  \begin{proof}[Proof of~\ref{av local law stepZ}]
  For such \(A_k\) with \(\braket{A_k}=0\), multiplying~\eqref{eq GGGk resolution} from the rhs.\ and taking the trace, we obtain 
  \begin{equation}
    \begin{split}
      &\Bigl(1+\landauOprec*{\frac{1}{N\eta_*}}\Bigr)\braket{G_{[k]}A_k} \\
      &= m_1 \braket{A_1 G_{(1,k]} A_k} + \sum_{j=2}^{k-1} m_1 \braket{G_{[j]}}\braket{G_{[j,k]} A_k} +\landauOprec*{\frac{1}{N\eta_*^k}}\\
      &= \braket{M_{[k]}A_k} +\landauOprec*{\frac{1}{N\eta_*^k}}
    \end{split}  
  \end{equation}
  from~\eqref{M1k recursion} and the local laws for up to \(k-1\) resolvents. 
\end{proof}
  \begin{proof}[Proof of~\ref{iso local law step}]
  We take the inner product of~\eqref{eq GGGk resolution} with \(\vx,\vy\) to obtain 
  \begin{equation}\label{iso 1st step}
    \begin{split}
      &\braket{\vx,G_{[k]}\vy}\\
      &= m_1\braket{\vx,A_1G_{(1,k]}\vy} + \sum_{j=2}^{k-1} m_1\braket{G_{[j]}} \braket{\vx,G_{[j,k]}\vy} + m_1m_k\braket{\vx,\vy}\braket{G_{[k]}} + \landauOprec*{\sqrt{\frac{\rho}{N\eta_*}} \frac{1}{\eta_*^{k-1}} }\\
      &= m_1\braket{\vx,A_1 M_{(1,k]}\vy} + \sum_{j=2}^{k-1} m_1\braket{M_{[j]}}\braket{\vx,M_{[j,k]}\vy} + m_1m_k \braket{\vx,\vy}\braket{M_{[k]}} + \landauOprec*{\sqrt{\frac{\rho}{N\eta_*}} \frac{1}{\eta_*^{k-1}} } 
    \end{split}
  \end{equation}
  from the renormalization bound~\eqref{iso underline}, the averaged local laws~\eqref{av local law} for up to \(k\) resolvents and isotropic local laws~\eqref{iso local law} up to \(k-1\) resolvents. Now from the recursive relation~\eqref{M1k recursion} and \((1+q_{1k})m_1m_k=q_{1k}\) we have
  \begin{align}
      &m_1 A_1 M_{(1,k]} + \sum_{j=2}^{k-1} m_1\braket{M_{[j]}} M_{[j,k]} + m_1m_k \braket{M_{[k]}}= M_{[k]}
  \end{align}
  and the claim follows together with~\eqref{iso 1st step}.
\end{proof}
By combining~\ref{av local law step}--\ref{iso local law step} we conclude the induction step and thereby the proof of Theorem~\ref{local law}.

\printbibliography%

\end{document}